\documentclass[11 pt, twoside]{amsart}
\usepackage{amssymb,latexsym,amsthm,amsmath,mathtools,amsfonts,hyperref,fancyhdr,comment,enumitem, cleveref,xcolor,bbm, ytableau, caption, amssymb, mathrsfs, tikz, subcaption}
\usepackage[a4paper,margin=1in]{geometry}

\hypersetup{
	colorlinks=true,
	linkcolor=blue,
	filecolor=magenta,
	urlcolor=cyan,
	citecolor=red,
}

\newcommand{\C}{\ensuremath{\mathscr{C}}}

\newcommand{\psl}{\textup{PSL}}

\newcommand{\Mod}{\mathrm{mod}}
\newcommand{\supp}{\mathrm{Supp}}

\newcommand{\dcd}{\textup{dcd}}
\newcommand{\gn}{\textup{gn}}
\newcommand{\cn}{\textup{cn}}

\newtheorem{theorem}{Theorem}[section]
\newtheorem{lemma}[theorem]{Lemma}
\newtheorem{corollary}[theorem]{Corollary}

\newtheorem*{theorem*}{Theorem}

\newtheorem{observation}[theorem]{Observation}

\newtheorem{example}[theorem]{Example}

\numberwithin{equation}{section}
\newcommand{\ignore}[1]{}

\newcommand{\mynote}[1]{}

\begin{document}
	\title{On Conjugacy Classes of Derangements in Symmetric and Alternating Groups}

    \author{Harish Kishnani}
    \address{Indian Institute of Science Education and Research Pune, India}
    \email{harishkishnani11@gmail.com}
    
    \author{Rijubrata Kundu}
    \address{Department of Mathematics, Birla Institute of Technology and Science, Pilani, Rajasthan, 333031, India}
    \email{rijubrata8@gmail.com, rijubrata.kundu@pilani.bits-pilani.ac.in}
	
    \subjclass[2020]{20B30, 20B35, 20B05, 20D06, 05A05}
	\date{\today}

    \keywords{Permutation groups, Derangements, Products of conjugacy classes}

    \begin{abstract}
        In this article, we prove two conjectures of Burness and Fusari [Timothy Burness and Marco Fusari, On derangements in simple permutation groups, Forum Math. Sigma 13 (2025)] concerning the powers and products of conjugacy classes of derangements in the symmetric and alternating groups: (1) We show that there exist two conjugacy classes $C$ and $D$ of derangements in $S_n$ such that $S_n=C^2\cup CD$, and (2) We show that there exists a conjugacy class $C$ of derangements in $A_n$ such that $C^2=A_n$, whenever $n\equiv 3\;(\Mod\;4)$. In fact, our result concerning the second conjecture holds in a considerably more general setting, which also answers affirmatively a question posed by Bertram  [Edward Bertram, Even permutations as a product of two conjugate cycles, J. Comb. Theory, Ser. A 12 (1972), 368-380] in a particular case. Moreover, we show that any conjugacy class $C$ of derangements in $S_n$ (resp. $A_n$)  contains a pair of elements that generate $S_n$ or $A_n$ (resp. $A_n$), unless $C$ is the conjugacy class of fixed-point-free involutions.
    \end{abstract}
	
	\maketitle
    
	\section{Introduction}

    Let $G$ be a finite transitive permutation group on $\Omega$, $|\Omega|\geq 2$, with point stabilizer $H\leq G$. Recall that an element of $G$ is called a derangement if it is fixed-point-free on $\Omega$. A classical result of Jordan asserts that a finite transitive group $G$ contains a derangement. Derangements have been widely studied in the context of permutation group theory, leading to several remarkable results and applications in graph theory, number theory, and topology (see \cite{bg} for a detailed study of some aspects of derangements and their applications). In a recent paper (see \cite{bf}), Burness and Fusari studied the set of derangements $\Delta(G)$ of a finite simple transitive group $G$  in several directions, namely, they studied the proportion of derangements in $G$, covering the group $G$ by the product of conjugacy classes of derangements, and generation of $G$ by conjugate derangements. In this article, we consider the symmetric group $S_n$ (resp. alternating group $A_n$) on $n$ letters with its usual action on $n$ letters, that is, with point stabilizer $S_{n-1}$ (resp. $A_{n-1}$), and study some aspects of covering these groups by the product of conjugacy classes of derangements, and generation of these groups by conjugate derangements.

    \medskip

    Recall that the product of conjugacy classes $C_1, C_2, \cdots, C_k$ of a group $G$ is $C_1C_2\cdots C_k:=\{g_1g_2\cdots g_k \mid g_i \in C_i\;\text{for all}\;1\leq i\leq k\}$.  In particular, for a conjugacy class $C$ of $G$, $C^k:=\{g_1g_2\cdots g_k \mid g_i \in C\;\text{for all}\;1\leq i\leq k\}$. In \cite[Theorem 3.8]{bf}, the authors proved that if $G=S_n$ ($n\geq 4$) is a transitive permutation group with pointwise stabilizer $H\leq S_n$ and $H\neq S_{n-1}$, then there exists a pair of conjugacy classes of derangements $C, D$ such that $\displaystyle S_n=C^2\cup CD$. In Remark 3.9, they further conjecture that if $H=S_{n-1}$, setting $C=x^{G}$ and $D=y^G$, where $x=(1,2,\ldots,n)$ and $y=(1,2)(3,4,\ldots,n)$, one may conclude that $S_n=C^2\cup CD$. The first result of this paper proves this in the affirmative, thereby completing their result in full generality. For the convenience of stating the theorem and subsequent results, we introduce some notations at this point. Let $\lambda=(\lambda_1,\cdots, \lambda_r)\vdash n$ be a partition of a natural number $n$, where $\lambda_1\geq \cdots \geq \lambda_r>0$ and $\sum_{i}\lambda_i=n$. Alternatively, in frequency notation, we may write $\lambda=(1^{n_1},\cdots,i^{n_i},\cdots)$ is a partition of $n$ where $\sum_i in_i=n$. Let $l(\lambda)$ denote the length of the partition $\lambda \vdash n$. It is well-known that conjugacy classes of $S_n$ are in one-to-one correspondence with partitions of $n$. Let $\C_{\lambda}$ denote the conjugacy class of $S_n$ indexed by $\lambda$. If $\lambda=(l, 1^{n-l})\vdash n$, we simply write $\C_l$ in place of $\C_{(l,1^{n-l})}$, which is the conjugacy class of $l$-cycles in $S_n$. We have the following:

    \begin{theorem}\label{prod_classes_symmetric}
        Let $n\geq 4$. Then, $S_n=\C_{n}^2\cup \C_{n}\C_{(n-2,2)}$.
    \end{theorem}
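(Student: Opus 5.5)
Let $C=\C_n$ and $D=\C_{(n-2,2)}$. The plan is to split by parity and quote a known theorem for one parity class. An $n$-cycle has sign $(-1)^{n-1}$ and an element of $D$ has sign $(-1)^{n-3}(-1)=(-1)^{n}$. Hence $C^2\subseteq A_n$, while $CD\subseteq S_n\setminus A_n$. It therefore suffices to prove two things: $A_n=C^2$, and every odd permutation lies in $CD$.

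\textbf{Even part.} We would quote Bertram's theorem (1972). It states that every even permutation of $S_n$ is a product of two $l$-cycles for any $l$ with $\lfloor 3n/4\rfloor\le l\le n$. Taking $l=n$ gives $A_n=\C_n^2$ directly. If we wanted to avoid quoting it, the fallback is the classical argument (Herzog--Reid style). One builds explicit factorizations of each even cycle type $\mu$ as a product of two $n$-cycles. This can be done by induction on the number of cycles of $\mu$, using the gluing identity
\[
(a_1,\dots,a_r)(b_1,\dots,b_s)=(a_1,\dots,a_r,b_1,\dots,b_s)(a_1,b_1),
\]
and absorbing transpositions in pairs.

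\textbf{Odd part.} Fix an odd permutation $\sigma$. We need $\tau\in\C_n$ with $\tau^{-1}\sigma\in D$; equivalently, we write $\sigma=xy$ with $x$ an $n$-cycle and $y$ of type $(n-2,2)$. We would use a character/structure-constant approach only as a fallback. The primary approach is combinatorial, by induction on $n$, with the base cases $n=4,5$ (and perhaps $6$) checked by hand.

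The key trick is to use a decomposition of a smaller even permutation into two long cycles and then splice in a transposition. Concretely:
\begin{itemize}
\item Since $\sigma$ is odd, $\sigma$ has an even-length cycle. Write $\sigma=\sigma'\cdot(a,b)$, where $(a,b)$ splits an even cycle or merges two cycles of $\sigma$, so that $\sigma'$ is even.
\item By the even part, $\sigma'=x z$ with $x,z$ $n$-cycles.
\item Then $\sigma=x\,(z(a,b))$, and $z(a,b)$ splits the $n$-cycle $z$ into two cycles of lengths $k$ and $n-k$, where $k$ is the distance from $a$ to $b$ along $z$.
\end{itemize}
So we need to arrange the factorization of $\sigma'$ so that $a$ and $b$ are adjacent at distance $2$ in $z$. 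We expect this to be the main obstacle: the even-case proof must be strengthened to control the relative position of two prescribed points in one of the $n$-cycle factors. We would handle it by choosing $(a,b)$ cleverly. If $\sigma$ has a $2$-cycle or fixed points to exploit, we merge appropriately. Otherwise we take $a,b$ at distance two along an even cycle of $\sigma$. We would then check, via explicit constructions tailored to cycle types (treating separately the cases where $\sigma$ has many fixed points, e.g.\ $\sigma$ a transposition, where we write $(1,2)$ as an $n$-cycle times an $(n-2,2)$-element explicitly), that the required $n$-cycle factorization with $a=z^{2}(b)$ exists.

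For example, for $\sigma=(1,2)$ one takes $x=(1,2,\dots,n)$ and $y=x^{-1}(1,2)$, which has type $(n-1,1)$. That choice is wrong, so the transposition case needs a different $x$, found by direct search for small $n$ and generalized. These explicit families, plus the inductive splice, should cover all odd classes.
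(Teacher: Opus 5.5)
Your even part is the same as the paper's. Bertram's theorem with $l=n$ (valid also for $n=4$) gives $\C_n^2=A_n$, and your sign count correctly shows $\C_n\C_{(n-2,2)}\subseteq S_n\setminus A_n$.

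The gap is the odd part, which you do not actually prove. Your splice step only restates the problem. Every $y\in\C_{(n-2,2)}$ can be written as $y=z(a,b)$ with $z$ an $n$-cycle in which $a,b$ are at distance $2$: join the $2$-cycle to the $(n-2)$-cycle by a transposition. So asking for $\sigma(a,b)=xz$ with $x,z\in\C_n$ and $a=z^{\pm 2}(b)$ is literally equivalent to asking $\sigma\in\C_n\C_{(n-2,2)}$. Knowing only that $\sigma(a,b)\in\C_n^2$ gives no control over where $a,b$ sit in $z$, and you rightly call this the main obstacle. None of your proposed remedies is carried out: choosing $(a,b)$ ``cleverly'', a positional strengthening of Bertram's construction, an induction on $n$ whose inductive step is never stated, and explicit families to be found by search. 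Your one concrete attempt, for $\sigma=(1,2)$, fails, as you note. So apart from hand-checked small cases, no odd permutation is actually shown to lie in $\C_n\C_{(n-2,2)}$.

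The paper closes this with the argument you set aside as a fallback, and it is short. In Frobenius' formula, the sum to control is $\sum_\lambda \chi_\lambda(w_{(n)})\chi_\lambda(w_{(n-2,2)})\chi_\lambda(\pi)/\chi_\lambda(1)$.
\begin{itemize}
\item By the Murnaghan--Nakayama rule, $\chi_\lambda(w_{(n)})=0$ unless $\lambda=(n-k,1^k)$ is a hook, where it equals $(-1)^k$.
\item A hook $(n-k,1^k)$ contains a rim hook of length $n-2$ only for $k\in\{0,1,n-2,n-1\}$. So only the trivial, sign, standard and twisted standard characters survive.
\item Use $\chi_{(n-1,1)}(\pi)=n_1(\pi)-1$, so $\chi_{(n-1,1)}(w_{(n-2,2)})=-1$, together with $\mathrm{sgn}(\pi)=-1$.
\end{itemize}
The sum then equals $2+2\frac{n_1(\pi)-1}{n-1}>0$ for every odd $\pi$, which is exactly the missing statement. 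If you want to stay combinatorial, you must actually prove the positional refinement of Bertram's construction for every odd cycle type. That is far more work than this count.
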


    \noindent We prove the above result in Section \ref{proof_first_theorem} using a simple character-theoretic argument. A classical result of Frobenius and the well-known Murnaghan-Nakayama rule for evaluating character values of the complex irreducible representations of $S_n$ are the two main tools used in the proof.

    \medskip

    Let $G=A_n$ be a simple transitive permutation group with point-stabilizer $H\leq A_n$.  It is natural to expect that there exists a conjugacy class $C$ of derangements such that $C^2=A_n$. In \cite[Proposition 3.5]{bf}, the authors show that this holds apart from a few exceptions where they exhibit a conjugacy class of derangements $C$ of $A_n$ such that $A_n=\{1\}\cup C^2$. These exceptions are (a) $(G,H)=(A_5, A_4),\; (A_5,S_3)$ or $(A_8, 2^4:(S_3\times S_3)$, (b) $n\geq  27$, $n\equiv 3\;(\Mod\; 4)$, and $H=A_{n-1}$. In Remark 3.6, they mention that while the exceptions in (a) are genuine exceptions in the sense that there exists no conjugacy class of derangements $C$ such that $C^2=A_n$, they conjecture that (b) is not a genuine exception. More precisely, they conjecture that if $n\geq 27$ and $n\equiv 3\;(\Mod\; 4)$, then $A_n=\C_{(n-4,2,2)}^2$. We note here that $\C_{(n-4,2,2)}$ is also an $A_n$-conjugacy class. Our second theorem proves this conjecture. In fact, we are able to prove a more general version. To state it, we need a few more notations. For $\sigma \in S_n$, let $\supp(\sigma)$ be the set of symbols moved by $\sigma$ and $|\supp(\sigma)|:=m_{\sigma}$. Let $n_i(\sigma)$ denote the number of cycles of length $i$ in the disjoint cycle decomposition (abbrv. dcd) of $\sigma$. Let $n_{\sigma}$ denote the number of non-trivial cycles in the dcd of $\sigma$, that is, $\displaystyle n_{\sigma}=\sum_{i>1}n_{i}(\sigma)$. We have the following:
   \begin{theorem}\label{prod_classes_alternating}
         Let $n\geq 5$, $k\geq1$ and $\lambda=(\ell, 2^k, 1^{n-\ell-2k})\vdash n$. Let $\sigma \in A_n$ be such that $\ell \geq \frac{m_\sigma + n_\sigma}{2}$. Then $\sigma \in \C_\lambda^2$. In particular, if $\ell \geq \lfloor \frac{3n}{4} \rfloor$, then $\C_\lambda^2 = A_n$.
    \end{theorem}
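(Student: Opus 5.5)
We will prove the first assertion combinatorially, by writing $\sigma$ explicitly as a product $\alpha\beta$ of two elements of cycle type $\lambda=(\ell,2^k,1^{n-\ell-2k})$. The plan rests on two elementary building blocks and then glues them together.

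The first building block is the product of two long cycles with prescribed overlap. By Bertram's classical theorem, every even permutation $\tau$ with $|\supp(\tau)|+n_\tau\le 2\ell$ (so $\ell\ge (m_\tau+n_\tau)/2$, exactly the hypothesis of the theorem) is a product of two $\ell$-cycles. We will re-prove the constructive version we need, because we must keep control of where the supports of the two cycles go. For a single $m$-cycle, an explicit identity of the form
\[
(a_1,\dots,a_p,b_1,\dots,b_q)(b_q,\dots,b_1,c_1,\dots,c_r)
\]
produces a cycle using $(m+1)/2$-type counting. Gluing across the cycles of $\tau$ (as in Bertram) then gives the general statement.

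The second step inserts the $2^k$ part. We write $\sigma$ as an even permutation and reserve $2k$ of the symbols (or use pairs inside $\sigma$'s own cycles) so that $\alpha=\gamma\,t_1\cdots t_k$ and $\beta=t'_1\cdots t'_k\,\delta$, where $\gamma,\delta$ are $\ell$-cycles and the $t_i,t'_i$ are transpositions disjoint from $\gamma$ and from $\delta$ respectively. The cleanest choice is $t'_i=t_i^{-1}=t_i$ with $t_i$ commuting past appropriately, so that the transpositions cancel and $\alpha\beta=\gamma\delta$. This requires the $t_i$ to be disjoint from both $\gamma$ and $\delta$. That is possible when $\ell+2k\le n$ and $\gamma,\delta$ can be chosen with the same support of size $\ell$. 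When the supports of $\gamma$ and $\delta$ differ, or when no room remains, we instead let the transpositions partly overlap the cycles and absorb them. The key identity is that $(x,y)$ times an $\ell$-cycle through $x,y$ splits it, and conversely merges cycles. This lets us trade the $2^k$ part against adjustments in $\sigma$'s cycle structure, using the parity of $\sigma$ to pair things up.

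The main obstacle is this bookkeeping. We need a uniform choice of supports so that both factors have exactly type $\lambda$, covering the cases where $m_\sigma$ is close to $n$ and there is no free room for the transpositions. I would first handle $\sigma$ with enough fixed points via the cancellation trick, and then treat large-support $\sigma$ by induction on $n_\sigma$, peeling off one cycle (or a pair of even-length cycles) at a time with explicit small identities.

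For the final claim, note that $m_\sigma+n_\sigma\le n+\lfloor n/2\rfloor$ in general. For $\sigma\in A_n$ the worst case is $\sigma$ a product of $2$-cycles and $3$-cycles, which gives $m_\sigma+n_\sigma\le 3n/2$ (in the $2$-cycle case $m+n_\sigma = 3n/2$). A parity check shows that for even $\sigma$ the bound sharpens to $\frac{m_\sigma+n_\sigma}{2}\le\lfloor 3n/4\rfloor$. Hence $\ell\ge\lfloor 3n/4\rfloor$ gives $A_n\subseteq\C_\lambda^2$. The reverse inclusion holds because $\C_\lambda^2\subseteq A_n$ trivially.
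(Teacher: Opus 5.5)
\textbf{Verdict: genuine gap.} The proposal handles only the case where $\sigma$ has enough fixed points; the case that carries the theorem's content is left unproved.

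\textbf{What is correct.} Your argument for the last sentence of the theorem works. A nontrivial cycle of length $L$ contributes $L+1\le\tfrac{3}{2}L$ to $m_\sigma+n_\sigma$. For even $\sigma$ we have $m_\sigma+n_\sigma\equiv m_\sigma-n_\sigma\equiv 0\pmod 2$, so $\tfrac{m_\sigma+n_\sigma}{2}$ is an integer at most $\lfloor 3n/4\rfloor$. The extremal case is all $2$-cycles, not a mix with $3$-cycles, but this does not affect the bound.

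The cancellation trick is also correct, and it is the paper's Case~1. If $m_\sigma\le\ell$, take Bertram's two $\ell$-cycles on a common $\ell$-set containing $\supp(\sigma)$, and append the same $k$ transpositions on fresh points to both factors. Absorbing pairs of fixed points of $\sigma$ as cancelling transpositions in the same way (the paper's Case~3) reduces everything to $m_\sigma\ge\ell+2k-1$. After restricting to a minimal ambient set, $\sigma$ then has at most one fixed point. So no transposition of either factor can sit on fixed points of $\sigma$, and the cancellation trick is unavailable.

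\textbf{The gap.} That residual situation is the entire content of the theorem. It already arises for $\lambda=(n-4,2,2)$ with $\sigma$ an $n$-cycle, which is the Burness--Fusari case. Your proposal does not treat it. The phrases ``let the transpositions partly overlap the cycles and absorb them'' and ``induction on $n_\sigma$, peeling off one cycle \dots\ with explicit small identities'' state intentions, not arguments:
\begin{itemize}
\item You give no identity.
\item You do not say what cycle type the factorization of the peeled-off remainder must have (which $\ell'$ and which $k'\le k$) for the induction to close.
\item You do not check that the budget $\ell\ge\frac{m_\sigma+n_\sigma}{2}$ survives each step.
\end{itemize}
The observation that multiplying by a transposition splits or merges cycles does not by itself give two factors that are simultaneously of type exactly $(\ell,2^k,1^{n-\ell-2k})$.

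\textbf{What the paper does instead.} The missing idea is an absorption identity with the right length accounting.
\begin{itemize}
\item For an odd cycle $(1,\dots,t)$ with $t\ge 2k+3$, the paper writes it as $\rho_1\rho_2$ with both $\rho_i$ of type $(\frac{t-2k+3}{2},2^k)$. It does the analogous thing for an even cycle of length $\ge 2k+4$ paired with a second even cycle.
\item So one cycle of $\sigma$ absorbs all $k$ transpositions, while its long part is $k-1$ shorter than Bertram's $\frac{t+1}{2}$.
\item Concatenating with Bertram's factorization of the remaining cycles gives long cycles of length $\frac{m_\sigma+n_\sigma-2k+2}{2}\le\ell$, which are then lengthened to $\ell$.
\item When every cycle of $\sigma$ has length at most $2k+2$, no single cycle can absorb all the transpositions. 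The paper then runs a long reduction: pairs of $2$-cycles; pairs of equal-length cycles written as products of two fixed-point-free involutions; pairs of $3$- and $4$-cycles; odd cycles split as type $(3,2^{(t-3)/2})$; even cycles split as types $(3,2^{(t-4)/2})$ and $(4,2^{(t-4)/2})$. This ends in a finite list of configurations, each settled by an explicit factorization.
\item The one-fixed-point case $m_\sigma=\ell+2k-1$ costs an extra unit of length and needs separate handling, especially when $k=1$.
\end{itemize}
Without constructions of this kind, your plan proves the theorem only for $\sigma$ with many fixed points, which is the easy part.
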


    \noindent It is worthwhile to mention here that our result is a special case of a more general question of Bertram (see concluding remarks of \cite{be}), where he asks that, in order for a conjugacy class $C\subseteq S_n$ satisfy $C^2=A_n$ is it sufficient that a representative permutation in $C$ contains a cycle of length $\geq \lfloor \frac{3n}{4} \rfloor$? Brenner in \cite{br} answered this question in the affirmative, provided $C$ corresponds to a partition with exactly two parts. The study of products and powers of conjugacy classes in finite groups, especially for finite non-abelian simple groups, has become a classical theme in finite group theory, driven by several interesting problems and remarkable conjectures, such as Thompson's conjecture and the Arad-Herzog conjecture. Let $C$ be a conjugacy class of a group $G$. The covering number of $C$ (if it exists) is defined to be the least positive integer $k$ such that $C^k=G$. A result of Arad-Herzog-Stavi states that if $G$ is a finite non-abelian simple group and $\{1\}\neq C$ is a conjugacy class, then the covering number of $C$ exists. In this language, Theorem \ref{prod_classes_alternating} provides a new family of conjugacy classes of $A_n$ (when $l+k\equiv 1\;(\Mod\;2)$) with covering number 2. There are many interesting results on covering numbers of conjugacy classes of finite non-abelian simple groups. We refer the readers to \cite{ah, bh, hgl, kkm, Lev1, Lev2} for several results concerning covering numbers of conjugacy classes of the alternating groups $A_n$ and that of the projective special linear groups $\psl(n,K)$ over a field $K$. 
    \medskip

    Our final theorem concerns the generation of $S_n$ or $A_n$ by conjugacy classes of derangements. In \cite[Theorem G]{bf}, the authors prove that if $G$ is a finite transitive simple group with point stabilizer $H\leq G$, then there exists a pair of conjugate derangements that generate $G$. In the special case of $G=S_n$ (resp. $G=A_n$) with point stabilizer $S_{n-1}$ (resp. $A_{n-1}$), we prove the following result in Section \ref{proof_second_theorem}.

    \begin{theorem}\label{generation_by_conjugate_derangements}
        Let $n\geq 5$. Let $C$ be a conjugacy class of derangements in $S_n$ or $A_n$, and $C$ is not the product of transpositions. Then, for any randomly chosen element $x\in C$, there exists a $y\in C$ such that $x$ and $y$ generate $A_n$ or $S_n$.
    \end{theorem}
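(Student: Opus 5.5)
Given $x\in C$, we aim to construct an explicit conjugate $y=gxg^{-1}$ with $\langle x,y\rangle$ primitive and containing a cycle of prime length $p\le n-3$. Jordan's theorem then gives $\langle x,y\rangle\supseteq A_n$. Since $x$ is fixed, it is equivalent to fix a convenient labelling of $x$: choose $x$ in a standard form and search for a suitable $g$. The hypothesis that $C$ is a class of derangements and is not the class of fixed-point-free involutions says that the cycle type $\lambda=(\lambda_1,\dots,\lambda_r)$ of $x$ has all parts $\ge 2$ and $\lambda_1\ge 3$. If $C$ is an $A_n$-class that is not an $S_n$-class, we need $y$ to be $A_n$-conjugate to $x$. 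We can arrange this by composing $g$ with an odd permutation centralizing $x$ when such a permutation exists. In the exceptional case (all cycles of distinct odd lengths) we track the parity of $g$ directly, replacing $g$ by a modified choice if necessary.

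\textbf{Transitivity.} With $x=(1,\dots,\lambda_1)(\lambda_1+1,\dots)\cdots$, choose $y$ so that its cycles "straddle" consecutive cycles of $x$. For example, take $y$ to be $x$ conjugated by a permutation that shifts labels by one across cycle boundaries, so that each cycle of $y$ meets two adjacent cycles of $x$. The orbit graph then becomes connected, so $\langle x,y\rangle$ is transitive.

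\textbf{Primitivity.} We want $y$ to agree with $x$ on a long stretch. A block system $\mathcal B$ preserved by both $x$ and $y$ is then forced to be trivial. We would use two ingredients:
\begin{itemize}
\item a commutator-type element such as $x^{-1}y$ that is a $3$-cycle or a product with a single short cycle;
\item the standard fact that a transitive group containing an element whose support meets a block in a proper nonempty subset, while fixing enough points, is primitive.
\end{itemize}
Concretely, pick $g$ to be a transposition or a $3$-cycle. Then $x^{-1}y=x^{-1}gxg^{-1}$ is a product of two conjugates of $g^{\pm1}$ and has support of size at most $6$. Powering this element, or the commutator, can give a $3$-cycle or a prime cycle in $\langle x,y\rangle$. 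For $n\ge 8$ a $3$-cycle suffices for Jordan's theorem, and so do $2$-cycles, giving $S_n$. So the plan is to choose $g$ a transposition $(a\,b)$ with $a,b$ lying in different cycles of $x$ (or in the same long cycle when $r=1$), and compute $x\cdot(a\,b)x^{-1}(a\,b)$. This is a product of two transpositions, say $(a\,b)(x(a)\,x(b))$. Multiplying it by suitable further elements, or choosing $a,b$ adjacent so that the product is a $3$-cycle $(a\,b\,x(b))$, gives a $3$-cycle in $\langle x,y\rangle$.

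\textbf{Conclusion and main obstacle.} A transitive group containing a $3$-cycle whose "$3$-cycle graph" is connected contains $A_n$. Alternatively, a primitive group containing a $3$-cycle contains $A_n$. We would get primitivity from transitivity plus the $3$-cycle: the normal closure of the $3$-cycles is generated by $3$-cycles, and their supports form a connected structure under the action of $\langle x,y\rangle$, which forces the full alternating group. The main obstacle is the case analysis needed to make $\langle x,y\rangle$ transitive and produce the $3$-cycle simultaneously when $x$ has many equal short cycles, e.g.\ type $(3,2^k)$ or $(3^m)$. There, conjugating by a single transposition only merges two cycles, so we would instead take $g$ to be a product of transpositions linking all cycles in a chain, and recheck that $x^{-1}y$ still has a power that is a $3$-cycle. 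We must also fix the $A_n$-conjugacy parity, and treat small $n$ ($5\le n\le 7$) by direct check.
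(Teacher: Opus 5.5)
There is a genuine gap: the two properties you need, primitivity of $\langle x,y\rangle$ and a short cycle in it, come from incompatible choices of the conjugator $g$. The case that reconciles them is exactly what you postpone as the ``main obstacle''.

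\textbf{Transitivity versus the short cycle.}
\begin{itemize}
\item If $g=(a\,b)$, the cycles of $y=gxg^{-1}$ are the $g$-images of the cycles of $x$. So the orbits of $\langle x,y\rangle$ are the cycles of $x$, with at most the two cycles through $a$ and $b$ fused. For $r\ge 3$ cycles the group is intransitive; if $g$ is a $3$-cycle, this happens for $r\ge 4$.
\item Your $3$-cycle $(a\,x(b)\,b)$ only arises when $b=x(a)$. Then $g$ preserves the cycle of $x$ through $a$, so $y$ has the same cycles as $x$ setwise, and $\langle x,y\rangle$ is intransitive unless $x$ is an $n$-cycle.
\item When $a,b$ lie in different cycles, $x(a\,b)x^{-1}(a\,b)=(x(a)\,x(b))(a\,b)$ is a double transposition, and no power of it is a $3$-cycle.
\end{itemize}
As written, your argument therefore covers only $\lambda=(n)$. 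Replacing $g$ by a chain of transpositions linking all cycles restores transitivity but destroys the small support of $x^{-1}y$. You give no reason why $\langle x,y\rangle$ should then contain any short cycle, let alone one of prime length.

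\textbf{The primitivity step is false as stated.} The group $\langle (1\,2\,3),(1\,4)(2\,5)(3\,6)\rangle\le S_6$ is transitive, contains a $3$-cycle, and is imprimitive. Transitivity only makes the connected components of the $3$-cycle supports into a block system. Showing there is a single component is precisely a primitivity argument, and that argument is missing.

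\textbf{What the paper does instead.} The missing idea is a single pair of conjugators that does both jobs at once.
\begin{itemize}
\item The paper takes $y=\rho\sigma\rho^{-1}$, with $\rho$ the $n$-cycle concatenating the cycles of $\sigma$; this is essentially your ``shift across cycle boundaries''.
\item It takes $x=\tau\sigma\tau^{-1}$, with $\tau$ an $r$-cycle inside one cycle of length $\lambda_i>r$.
\item Then $x^{-1}y$ is a single $(2l+1)$-cycle on the heads, the tails and one extra point (Lemma~\ref{lemma_cycle_length}).
\item Primitivity is proved by a block argument that uses the fact that this cycle fixes the complementary set $\Omega^c$ pointwise (Lemma~\ref{primitive_1}).
\item The type $\lambda=(3,2^t)$, where $\Omega^c=\varnothing$, is handled separately, since there $x^{-1}y$ is an $n$-cycle (Lemma~\ref{primitive_2}).
\end{itemize}
Since $2l+1$ is usually composite, classical Jordan is not enough. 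The paper uses Jones's CFSG-based version, which covers cycles of any length $\le n-3$ (Theorem~\ref{Jones_Jordan}).

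The types with $2l+1>n-3$ are $(3^u,2^t)$ for $u\le 3$, $(4,2^t)$, $(4,3,2^t)$ and $(5,2^t)$. These are finished with explicit short cycles ($x^2$, $x^2y^2$, $x^4$, $x^3y^3$), plus a \textsf{GAP} check for the smallest cases. Your parity bookkeeping for split $A_n$-classes is fine in spirit; the paper does it by matching the parity of $\tau$ to that of $\rho$.
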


    \noindent In fact, in the above theorem, for a randomly chosen element $x\in C$ we produce several candidates $y\in C$ such that $x$ and $y$ generate $A_n$ or $S_n$ (see Corollary \ref{several_candidates}). A group $G$ is called $\frac{3}{2}$-generated if for a randomly chosen element $x\in G$, there exists a $y\in G$ such that $x$ and $y$ generate $G$. Guralnick and Kantor (see \cite{gk}) proved that every finite non-abelian simple group is $\frac{3}{2}$-generated. In general, in a 2-generated group $G$,  for a randomly chosen $x\in G$, there may not exist a conjugate of $x$ that generates $G$. From this viewpoint, Theorem \ref{generation_by_conjugate_derangements} asserts that derangements satisfy $\frac{3}{2}$-conjugate-generation, that is, for any randomly chosen derangement, there exists a conjugate derangement such that they together generate the group.  The study of generating sets of the symmetric and alternating groups is a well-established theme. There is a plethora of interesting results in this direction. In Section \ref{proof_third_theorem}, before proving Theorem \ref{generation_by_conjugate_derangements}, we give a brief account of some of the well-known results concerning generating sets of $S_n$ and $A_n$ consisting of conjugate elements, in order to place our result in a wider context.
    
    \section{Covering the symmetric group by products of derangements}\label{proof_first_theorem}
    In this section, we prove Theorem \ref{prod_classes_symmetric}. The following classical result of Frobenius facilitates the use of character theory in understanding products and powers of conjugacy classes in finite groups.

    \begin{theorem}\label{Product_Character_Theory}
        Let $G$ be a finite group with conjugacy classes $C_i = g_i^G$ for $i = 1, 2$. For an element $x \in G$, let $N(x)$ denote the number of solutions to the equation $x = y_1 y_2$ with $y_i \in C_i.$ Then
        \[N(x) = \frac{|C_1|\,|C_2|}{|G|}  \sum_{\chi \in \mathrm{Irr}(G)}  \frac{\chi(g_1)\chi(g_2)\chi(x)}{\chi(1)},\]
        where $\mathrm{Irr}(G)$ denotes the set of complex irreducible characters of $G$.
    \end{theorem}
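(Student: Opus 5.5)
This is the classical formula of Frobenius, and I would prove it with the standard argument via central idempotents, or equivalently via class sums, in the group algebra $\mathbb{C}[G]$. Let $\widehat{C}_i=\sum_{g\in C_i}g\in Z(\mathbb{C}[G])$ be the class sums. Then $N(x)$ is exactly the coefficient of $x$ in the product $\widehat{C}_1\widehat{C}_2$. Since this product is central, $N(x)$ depends only on the class of $x$. The plan is to extract this coefficient using the irreducible characters.

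\emph{Step 1: expand in terms of $\chi$.} For each $\chi\in\mathrm{Irr}(G)$ with corresponding representation $\rho_\chi$, Schur's lemma gives $\rho_\chi(\widehat{C}_i)=\omega_\chi(C_i)\,I$, where $\omega_\chi(C_i)=\frac{|C_i|\chi(g_i)}{\chi(1)}$. To see this, take the trace: $\mathrm{tr}\,\rho_\chi(\widehat{C}_i)=|C_i|\chi(g_i)$. Hence
\[\chi(\widehat{C}_1\widehat{C}_2)=\chi(1)\,\omega_\chi(C_1)\,\omega_\chi(C_2)=\frac{|C_1||C_2|\chi(g_1)\chi(g_2)}{\chi(1)}.\]

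\emph{Step 2: extract the coefficient.} For any element $a=\sum_{g}a_g g\in\mathbb{C}[G]$, the orthogonality relations (concretely, the decomposition of the regular character) give
\[a_x=\frac{1}{|G|}\sum_{\chi\in\mathrm{Irr}(G)}\chi(1)\,\chi(x^{-1}a).\]
Apply this to $a=\widehat{C}_1\widehat{C}_2$. Because $a$ is central, $\rho_\chi(a)$ is the scalar $\chi(a)/\chi(1)$, and therefore
\[\chi(x^{-1}a)=\overline{\chi(x)}\,\frac{\chi(a)}{\chi(1)}.\]
Substituting Step 1 yields
\[N(x)=\frac{|C_1||C_2|}{|G|}\sum_{\chi}\frac{\chi(g_1)\chi(g_2)\overline{\chi(x)}}{\chi(1)}.\]

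\emph{Step 3: remove the complex conjugate.} The statement has $\chi(x)$ where Step 2 produced $\overline{\chi(x)}$. This is the only point that needs care, and there are two ways to handle it.
\begin{itemize}
\item If we count solutions of $y_1y_2y_3=1$ with $y_3\in x^{-1}$'s class (equivalently, $x=y_1y_2$), the sum runs symmetrically over all $\chi$. We can reindex via $\chi\mapsto\overline{\chi}$, which permutes $\mathrm{Irr}(G)$. This converts the expression into $\overline{\chi(g_1)}\,\overline{\chi(g_2)}\chi(x)$, not $\chi(g_1)\chi(g_2)\chi(x)$. So the formula as literally stated is the conjugate form, and it is valid as stated in the intended application: for $G=S_n$ all characters are rational, so conjugation does nothing.
\item In general, I would note that $N(x)$ is a real (indeed integer) number. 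Taking complex conjugates of the whole sum therefore leaves it unchanged, and this converts $\chi(g_1)\chi(g_2)\overline{\chi(x)}$ into $\overline{\chi(g_1)\chi(g_2)}\chi(x)$.
\end{itemize}
So the exact stated form holds whenever the characters are real on the relevant classes, in particular for $S_n$. In my writeup I would flag this, and present the proof with $\overline{\chi(x)}$, noting that in $S_n$ it coincides with $\chi(x)$.
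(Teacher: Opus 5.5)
The paper does not prove this statement; it quotes it as Frobenius's classical formula, so there is no in-paper argument to compare with. Your Steps 1 and 2 are the standard class-sum argument (Schur's lemma for the central characters $\omega_\chi$, then the regular character to read off the coefficient of $x$), and they are correct. What they actually prove is
\[
N(x)=\frac{|C_1|\,|C_2|}{|G|}\sum_{\chi\in\mathrm{Irr}(G)}\frac{\chi(g_1)\chi(g_2)\overline{\chi(x)}}{\chi(1)}.
\]

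Your concern in Step 3 is justified, and you should state it more firmly: the identity as printed is false in general, not merely harder to prove. Take $G=\langle g\rangle\cong\mathbb{Z}/3$, $C_1=C_2=\{g\}$, $x=g^2$, and $\chi_k(g)=\omega^k$ with $\omega=e^{2\pi i/3}$. Then $N(x)=1$, while the printed right-hand side is $\frac13\sum_{k=0}^{2}\omega^{k}\omega^{k}\omega^{2k}=\frac13(1+\omega+\omega^2)=0$.

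Neither of your bullets removes the conjugate. Both only rewrite the correct formula in the equivalent form with $\overline{\chi(g_1)\chi(g_2)}\,\chi(x)$. So the first bullet adds nothing, and the sentence ``the formula as literally stated is the conjugate form'' should be replaced by a plain statement of when the printed version holds:
\begin{itemize}
\item whenever $x$ is conjugate to $x^{-1}$, since then every $\chi(x)$ is real;
\item or whenever $\chi(g_1)\chi(g_2)\in\mathbb{R}$ for all $\chi$, by your second bullet.
\end{itemize}
Every element of $S_n$ is conjugate to its inverse, so the paper's only use of the formula, in the proof of Theorem~\ref{prod_classes_symmetric}, is unaffected. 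With this correction your write-up is a complete proof of the statement in the form the paper needs.
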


    \noindent Thus, for any two conjugacy classes $C_1,C_2$ of $G$ and $x\in G$, $x\in C_1C_2$ if and only if $N(x)>0$. The well-known Murnaghan-Nakayama rule is a recursive formula for evaluating complex irreducible character values of the symmetric group. To state it, we need some notations. For partition $\lambda,\mu \vdash n$, let $\chi_{\lambda}$ denote the irreducible character of $S_n$ indexed by $\lambda \vdash n$ and $w_{\mu}$ be a representative of the conjugacy class $\C_{\mu}$. We recall that $\chi_{(n)}$ is the trivial character and $\chi_{(1^n)}
    $ is the sign character. Thus, $\chi_{(n)}(w_{\mu})=1$ and $\chi_{(1^n)}(w_{\mu})=(-1)^{n-l(\mu)}$ for every $\mu\vdash n$. We also recall the important relation that $\chi_{\lambda'}=\chi_{(1^n)}\chi_{\lambda}$, that is, $\chi_{\lambda'}$ is the twist of $\chi_{\lambda}$ by the sign character, where $\lambda^{'}$ denotes the transpose of $\lambda$. The Young diagram of a partition $\lambda=(\lambda_1,\lambda_2,\ldots)$ is made up of left-justified rows of boxes, where the $i$-th row has $\lambda_i$ boxes. We denote it by $T_{\lambda}$. The $(i,j)$-th cell of $T_{\lambda}$ is the box in the $i$-th row and $j$-th column. For each cell, there is an associated hook, that is, the collection of cells that are to the right of it in the same row and below it in the same column, including the cell itself. The hook length of a cell is the number of boxes in the associated hook. Let $h_{ij}$ denote the hook length of the $(i,j)$-th cell of $T_{\lambda}$. The rim-hook corresponding to the $(i,j)$-th cell is the collection of boxes at the $(k,l)$ positions such that $k\geq i, l\geq j$ and $T_{\lambda}$ does not contain a box in the $(k+1,l+1)$ position. The  rim-hook corresponding to the $(i,j)$-th cell is denoted by $rim_{ij}$. It is easy to see that the number of boxes in $rim_{ij}$ equals $h_{ij}$. Moreover, removing $rim_{ij}$ from $T_{\lambda}$ yields a Young diagram. Finally, let $l_{ij}$ denote the leg-length at the $(i,j)$-th cell, that is, the number of boxes in $j$-th column appearing strictly below the $(i,j)$-th cell. The following example illustrates these notions.

    \begin{example}
        The following is the Young diagram $T_{\lambda}$ of the partition $\lambda=(7,5,3,3,1)$ with its hook lengths:
        \begin{figure}[ht]
        \centering
        \ytableausetup{centertableaux, boxsize=1.2em}
        \begin{ytableau}
        11 & 9 & 8 & 5 & 4 & 2 & 1 \\
        8  & 6  & 5 & 2 & 1 \\
        5  & 3  & 2 \\
        4  & 2  & 1 \\
        1
        \end{ytableau}
        \caption{Young diagram of the partition $(7,5,3,3,1)$ with hook lengths.}
        \label{fig:young-hooklengths}
        \end{figure}
        
        \noindent In the next diagram, the red-coloured boxes and blue-coloured boxes constitute the rim-hook corresponding to the  $(3,1)$-cell and $(2,2)$-cell, respectively.
        
        \begin{figure}[ht]
        \centering
        \ytableausetup{centertableaux, boxsize=1 em}

        \begin{subfigure}{0.45\textwidth}
        \centering
        \begin{ytableau}
        *(white) & *(white) & *(white) & *(white) & *(white) & *(white) & *(white) \\
        *(white) & *(white) & *(white) & *(white) & *(white) \\
         & & *(red!40) \\
         *(red!40) & *(red!40) & *(red!40) \\
        *(red!40)
        \end{ytableau}
        \caption{$rim_{3,1}$ highlighted in red.}
        \end{subfigure}
        \hfill
        \begin{subfigure}{0.45\textwidth}
        \centering
        \begin{ytableau}
        *(white) & *(white) & *(white) & *(white) & *(white) & *(white) & *(white) \\
        *(white) &  & *(blue!40) & *(blue!40) & *(blue!40) \\
        *(white) &  & *(blue!40) \\
        *(white) & *(blue!40) & *(blue!40) \\
        *(white)
        \end{ytableau}
        \caption{$rim_{2,2}$ highlighted in blue.}
        \end{subfigure}

        \caption{Young diagrams of the partition $(7,5,3,3,1)$ with two different rim-hooks highlighted.}
        \label{fig:young-borderstrips}
    \end{figure}

    \noindent Thus, after removing $rim_{3,1}$  from  $T_{\lambda}$ we are left with the Young diagram of $(7,5,2)$. Similarly, after removing  $rim_{2,2}$ from $T_{\lambda}$ we are left with the Young diagram of $(7,2,2,1,1)$. Finally, we have $l_{3,1}=l_{2,2}=2$.
    \end{example}

    \noindent Now we can state the recursive Murnaghan-Nakayama rule. 
    
    \begin{theorem}\cite[Theorem 5.11.1]{ap}\label{MN_rule}
        Let $\lambda,\mu\vdash n$ and $\mu=(\mu_1,\mu_2,\ldots,\mu_k)$. Then,
        \[
        \chi_{\lambda}(w_{\mu})=\sum_{h_{ij}=\mu_k}(-1)^{l_{ij}}\chi_{\lambda\setminus rim_{ij}}(w_{\hat{\mu}})
        \]
        where $\hat{\mu}=(\mu_1,\mu_2,\ldots,\mu_{k-1})$.
    \end{theorem}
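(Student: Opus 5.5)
We would prove the rule through symmetric functions, using the Frobenius characteristic map. Work with symmetric polynomials in $N\geq n$ variables $x_1,\dots,x_N$. Let $p_\mu=p_{\mu_1}\cdots p_{\mu_k}$ be the power sums and $\delta=(N-1,N-2,\dots,0)$. For $\alpha\in\mathbb{Z}_{\geq 0}^N$ let $a_\alpha=\det(x_i^{\alpha_j})$ be the alternant, and let $s_\lambda=a_{\lambda+\delta}/a_\delta$ be the Schur polynomial. We take as the starting point Frobenius' formula
\[
p_\mu=\sum_{\lambda\vdash n}\chi_\lambda(w_\mu)\,s_\lambda ,
\]
equivalently $p_\mu a_\delta=\sum_\lambda \chi_\lambda(w_\mu)a_{\lambda+\delta}$. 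This is the standard definition and identification of $\chi_\lambda$ via the characteristic map, so we take it as known. The Schur polynomials $s_\lambda$ with $l(\lambda)\leq N$ are linearly independent, so $\chi_\lambda(w_\mu)$ is the coefficient of $s_\lambda$ in $p_\mu$.

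The key step is a Pieri-type rule for power sums. For a partition $\nu\vdash n-r$ we would show
\[
p_r\,s_\nu=\sum_{\lambda}(-1)^{\mathrm{ht}(\lambda/\nu)}s_\lambda ,
\]
where the sum runs over $\lambda\vdash n$ such that $\lambda/\nu$ is a connected skew shape containing no $2\times 2$ square (a border strip), and $\mathrm{ht}$ is its number of rows minus one. To prove it, multiply the alternant by $p_r=\sum_i x_i^r$. Since $p_r$ is symmetric, expanding the determinant gives
\[
p_r\,a_{\nu+\delta}=\sum_{i=1}^N a_{\nu+\delta+re_i}.
\]
For each $i$, the exponent vector $\nu+\delta+re_i$ either has a repeated entry, in which case the alternant vanishes, or it has distinct entries. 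In the latter case, moving the raised entry leftward to its sorted position passes over some number $t$ of entries. This gives $\pm a_{\lambda+\delta}$ with sign $(-1)^t$ for a unique partition $\lambda$. One then checks the combinatorial correspondence. The partitions $\lambda$ arising this way are exactly those for which $\lambda/\nu$ is a border strip of size $r$. Moreover, $t$ equals the number of rows of the strip minus one, so $t$ is the leg length. This is cleanest in the beta-set (abacus) picture, where adding a rim hook of length $r$ means moving one bead from position $b$ to the empty position $b+r$.

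With this rule the recursion follows. Write $p_\mu=p_{\hat\mu}\,p_{\mu_k}$ and expand $p_{\hat\mu}=\sum_{\nu\vdash n-\mu_k}\chi_\nu(w_{\hat\mu})s_\nu$. Applying the Pieri-type rule term by term and comparing coefficients of $s_\lambda$ gives
\[
\chi_\lambda(w_\mu)=\sum_{\nu}(-1)^{\mathrm{ht}(\lambda/\nu)}\chi_\nu(w_{\hat\mu}),
\]
where $\nu$ ranges over the partitions obtained from $\lambda$ by removing a border strip of size $\mu_k$. It remains to match this with the statement of Theorem \ref{MN_rule}. Border strips of $\lambda$ of size $h$ correspond bijectively to cells $(i,j)$ with $h_{ij}=h$. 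The strip attached to $(i,j)$ is exactly $rim_{ij}$: it starts in row $i$ at the end of the arm and finishes in column $j$ at the foot of the leg. Its height is therefore the leg length $l_{ij}$.

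We expect the main obstacle to be the sign and bijection bookkeeping in the Pieri-type rule. Concretely, we must show that the alternant $a_{\nu+\delta+re_i}$ vanishes exactly when the resulting skew shape fails to be a border strip. We must also show that the number $t$ of transpositions needed to sort the exponents equals the leg length. We would handle this with the first-column hook lengths (beta-numbers) $\beta_j=\lambda_j+N-j$. Distinct entries $\beta_j$ correspond to partitions, and the beads lying strictly between $b$ and $b+r$ count precisely the rows of the rim hook below its top row. This makes both the bijection and the sign transparent.
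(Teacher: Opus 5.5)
Your proposal is correct. There is no proof in the paper to compare it with: the statement is quoted from \cite{ap} and used as a black box, only to evaluate $\chi_\lambda(w_{(n)})$ and $\chi_{(n-k,1^k)}(w_{(n-2,2)})$ in the proof of Theorem~\ref{prod_classes_symmetric}.

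Your route is the classical alternant proof. It takes Frobenius' formula $p_\mu a_\delta=\sum_\lambda\chi_\lambda(w_\mu)a_{\lambda+\delta}$ as input. It then derives the Pieri-type rule for $p_rs_\nu$ from $p_ra_{\nu+\delta}=\sum_i a_{\nu+\delta+re_i}$ by sorting beta-numbers. Finally it identifies removable border strips of size $h$ with the $rim_{ij}$ having $h_{ij}=h$, whose height is $\lambda'_j-i=l_{ij}$. Taking $N\geq n$ correctly ensures $l(\lambda)\leq N$ for every $\lambda\vdash n$, so no border strip is lost.

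What your self-contained argument gives beyond the citation is order-independence. Since $p_\mu=p_{\hat\mu}p_{\mu_k}$ is a product of commuting factors, you get the recursion for stripping \emph{any} part of $\mu$, not only the last one. The paper implicitly needs this. It deduces that $\chi_{(n-k,1^k)}(w_{(n-2,2)})\neq 0$ forces a hook of length $n-2$, yet with $\mu=(n-2,2)$ the rule as stated strips a rim hook of length $2$.

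For the step you flag as the main obstacle, a direct computation suffices. Suppose the bead $\beta_i=\nu_i+N-i$ jumps to $\beta_i+r$ over $\beta_m,\dots,\beta_{i-1}$, so $t=i-m$. Then $\lambda_m=\nu_i+r-t$, $\lambda_s=\nu_{s-1}+1$ for $m<s\leq i$, and all other rows are unchanged. This exhibits $\lambda/\nu$ as a border strip of size $r$ occupying exactly rows $m,\dots,i$, so its height is $t$. Conversely, every border strip arises this way. Also, the moved bead is recovered as the unique element of $\beta(\nu)\setminus\beta(\lambda)$. Hence distinct $i$ give distinct $\lambda$, and every coefficient in $p_rs_\nu$ is $\pm1$.
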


    \begin{proof}[Proof of Theorem \ref{prod_classes_symmetric}]
    
    By a result of Bertram given in the next section (see Theorem \ref{Bertram}), we have  $\C_n^2=A_n$. Thus, to prove the above theorem, it only remains to show that $\C_n\C_{(n-2,2)}=S_n\setminus A_n$. Let $\pi \in S_n\setminus A_n$. To show $\pi \in \C_n\C_{(n-2,2)}$, it is enough to show that $\displaystyle N(\pi)= \sum_{\lambda \vdash n}\frac{\chi_{\lambda}(w_{(n)})\chi_{\lambda}(w_{(n-2,2)})\chi_{\lambda}(\pi)}{\chi_{\lambda}(1)}>0$. Using Theorem \ref{MN_rule}, it is easy to see that $\chi_{\lambda}(w_{(n)})=0$, unless $\lambda=(n-k,1^k)$, in which case $\chi_{\lambda}(w_{(n)})=(-1)^k$. Thus the above sum vanishes for all $\lambda\neq (n-k,1^k)$ and we get $\displaystyle N(\pi)=\sum_{k=0}^{n-1}\frac{(-1)^k\chi_{(n-k,1^k)}(w_{(n-2,2)})\chi_{(n-k,1^k)}(\pi)}{\chi_{(n-k,1^k)}(1)}$. Now $\chi_{(n-k,1^k)}(w_{(n-2,2)})\neq 0$ implies that $T_{(n-k,1^k)}$  has a hook of length $n-2$, which is possible only when $0\leq k\leq 1$ or $n-2\leq k\leq n-1$. We recall that $\chi_{(n-1,1)}(\pi)=n_1(\pi)-1$, and hence $\chi_{(n-1,1)}(w_{(n-2,2)})=-1$. This  yields 
     
     $$\displaystyle N(\pi)=2+2\frac{n_1(\pi)-1}{n-1}>0$$
     
     \noindent Our proof is now complete.
    \end{proof}

    \section{Covering alternating groups by squares of conjugacy classes of derangements}\label{proof_second_theorem}

    Before stepping into the proof of Theorem \ref{prod_classes_alternating}, we mention a few things about the covering of $A_n$ by the squares of conjugacy classes of derangements in $A_n$. Bertram proved the following result.
	\begin{theorem}\cite{be}\label{Bertram}
		Let $2\leq l\leq n$ and $\C_l$ denote the conjugacy class of $l$-cycles in $S_n$. If $n\neq 4$, then $\C_l^2=A_n$ if and only if $\lfloor \frac{3n}{4} \rfloor\leq l\leq n$. When $n=4$, $\C_l^2=A_4$ if and only if $2\leq l\leq 4$.
	\end{theorem}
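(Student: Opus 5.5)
We would prove the two directions separately: a Riemann--Hurwitz style counting bound for necessity, and an explicit gluing construction for sufficiency. The case $n=4$ is settled by direct inspection of $A_4$ (twelve elements, and the products of two $2$-, $3$- or $4$-cycles are easy to list), so assume $n\neq 4$.

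\emph{Necessity.} Suppose $\sigma=\alpha\beta$ with $\alpha,\beta\in\C_l$. Let $G=\langle\alpha,\beta\rangle$, with orbits $O_1,\dots,O_t$ on $\{1,\dots,n\}$. On each orbit $O_j$ the triple $(\alpha,\beta,(\alpha\beta)^{-1})$ has product $1$ and generates a transitive group. The Riemann--Hurwitz (genus $\geq 0$) inequality therefore gives $c_j(\alpha)+c_j(\beta)+c_j(\sigma)\le |O_j|+2$, where $c_j$ counts cycles, fixed points included, on $O_j$. Summing over $j$ gives
\[c(\alpha)+c(\beta)+c(\sigma)\le n+2t .\]
Since $\supp(\alpha)\cup\supp(\beta)$ meets every nontrivial orbit and has size at most $2l$, a point outside it is a common fixed point and hence its own orbit. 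Substituting $c(\alpha)=c(\beta)=n-l+1$ then gives the clean inequality
\[c(\sigma)\le 2l-n+\#\{\text{nontrivial orbits}\}.\]
Now choose $\sigma\in A_n$ with many cycles and no fixed points: $\sigma$ of type $(2^{n/2})$ or $(2^{(n-3)/2},3)$, adjusted by a $4$-cycle or a $3$-cycle so that $\sigma$ is even. Every orbit of $G$ containing a cycle of $\sigma$ must also contain moved points of both $\alpha$ and $\beta$. We would bound the number of nontrivial orbits crudely by $n-l$ or so, and sharpen it by noting that a nontrivial orbit $O$ contributes at least $|O|/2$ to $c(\sigma)$ only when $|O|$ is small. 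Working out $c(\sigma)\approx n/2$ against the right-hand side should give $l\ge \lfloor 3n/4\rfloor$. The floor, and the residue of $n$ modulo $4$, come from the parity adjustment of $\sigma$.

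\emph{Sufficiency.} Fix $l\ge\lfloor 3n/4\rfloor$ and $\sigma\in A_n$. We would build $\alpha$ and $\beta$ explicitly.
\begin{enumerate}
\item Write the cycles of $\sigma$, fixed points included, as a list.
\item Pair up the even-length cycles, which is possible since $\sigma$ is even.
\item Give basic ``gadgets'': for a single odd cycle, and for a pair of even cycles, write down two cycles of prescribed lengths whose product is that block.
\item Check that the gadget cycle lengths can be taken in a range $[a,b]$ with $a\approx \tfrac{3}{4}(\text{block size})$ and $b$ equal to the block size. Example: an $m$-cycle, $m$ odd, is a product of two $m$-cycles, and also of two $k$-cycles for suitable $k$ near $3m/4$.
\item Merge gadgets for different blocks: if $\alpha_1\beta_1=\sigma_1$ and $\alpha_2\beta_2=\sigma_2$ on disjoint supports, then conjugating by a transposition linking the two supports, i.e.\ inserting a point of one cycle into the other, turns $\alpha_1\alpha_2$ into a single cycle while preserving the product after a compensating change to $\beta$.
\end{enumerate}
Iterating step 5 produces $l$-cycles $\alpha,\beta$ for every value of $l$ between the sum of the lower bounds and $n$. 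The inequality $l\ge\lfloor 3n/4\rfloor$ is exactly what guarantees that $l$ lies in this range.

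The main obstacle will be sufficiency. We expect the hardest part to be designing gadgets flexible enough to realize every cycle length, together with the bookkeeping showing that the merging step keeps both factors single cycles of the \emph{same} length $l$. We would try first to proceed by induction on the number of cycles of $\sigma$, peeling off one cycle, or one pair of even cycles, at a time, and to check the small base cases directly.
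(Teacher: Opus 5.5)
Both directions have a genuine gap, and each can be repaired with one idea you have not supplied.

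\emph{Sufficiency.} Your skeleton is the same as Bertram's construction recalled in the paper: odd cycles and pairs of even cycles as fragments, glued by linking transpositions. But you leave open the step you yourself call the main obstacle, and you aim it at the wrong target, because no gadget needs to realize a range of lengths.

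The paper proceeds in four steps.
\begin{enumerate}
\item It factors each fragment once, at the minimal length $(m_{\sigma_i}+n_{\sigma_i})/2$. For an odd cycle this is $(x_1,\dots,x_{2k+1})=(x_{k+2},\dots,x_{2k+1},x_1)(x_1,\dots,x_{k+1})$, and there are three explicit formulas for pairs of even cycles.
\item It concatenates these through common letters to get $\sigma=\pi\tau$, with $\pi,\tau$ cycles of length $(m_\sigma+n_\sigma)/2$.
\item It lengthens globally, one step at a time. Since $|\supp(\pi)|=|\supp(\tau)|$, either there are $a\in\supp(\pi)\setminus\supp(\tau)$ and $b\in\supp(\tau)\setminus\supp(\pi)$, and then $\sigma=\bigl(\pi(a\;b)\bigr)\bigl((a\;b)\tau\bigr)$; or $\supp(\pi)=\supp(\tau)$, and for $l<n$ a point $c$ outside it and any $d$ inside give $\sigma=\bigl(\pi(d\;c)\bigr)\bigl((d\;c)\tau\bigr)$. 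Either way the new factors are $(l+1)$-cycles.
\item The only arithmetic left is $(m_\sigma+n_\sigma)/2\le\lfloor 3n/4\rfloor$. This follows from $n_\sigma\le m_\sigma/2$ together with the evenness of $m_\sigma+n_\sigma$ for $\sigma\in A_n$.
\end{enumerate}

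Your ``$a\approx\tfrac34$ of the block size'' cannot be approximate. For $\sigma$ of type $(3,2^{2q})$ in $A_{4q+3}$ the budget $\lfloor 3n/4\rfloor=3q+2$ is used exactly, so the $3$-cycle must be realized by two $2$-cycles, whereas rounding $\tfrac34\cdot 3$ up would require $3$. Also, step 5 is multiplication by a transposition, not conjugation, and the linking points must lie in both factors of their gadgets.

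\emph{Necessity.} The paper does not reprove this direction; it cites Bertram. The genus inequality is a fine tool, but your way of using it fails. Plugging $s\le n-l$, where $s$ is the number of nontrivial orbits, into your inequality yields only $c(\sigma)\le l$, which holds automatically for fixed-point-free $\sigma$. The proposed ``sharpening'' is not an argument. The summed inequality is also misstated: for fixed-point-free $\sigma$ it reads $c(\sigma)\le 2l-n+2s-2$.

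The missing observation is that $\alpha$ and $\beta$ are single cycles, so each support lies in one orbit. Hence $s\le 2$, and $s=2$ forces $\supp(\alpha)\cap\supp(\beta)=\varnothing$, i.e.\ $\sigma$ of type $(l,l,1^{n-2l})$. For fixed-point-free $\sigma$ not of that type the group is transitive, and you get $l\ge (n+c(\sigma))/2=(m_\sigma+n_\sigma)/2$. This is exactly the threshold of Theorem~\ref{Bertram_intermediate}. The types $(2^{n/2})$, $(4,2^{(n-4)/2})$, $(5,2^{(n-5)/2})$ and $(3,2^{(n-3)/2})$, for $n\equiv 0,2,1,3\pmod 4$ respectively, then give $\lfloor 3n/4\rfloor$.

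The disjoint case is exactly why $n=4$ is exceptional, since $(1\,2)(3\,4)\in\C_2^2$. It also constrains your choice at $n=6$: type $(3,3)$ lies in $\C_3^2$, so you must use $(4,2)$ rather than ``adjusting by a $3$-cycle''.
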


     We recall that a $S_n$-conjugacy class $\C_{\lambda}\subseteq A_n$ splits into two $A_n$-classes if and only if $\lambda$ has distinct and odd parts; otherwise, $\C_{\lambda}$ is an $A_n$-conjugacy class. When $\lambda$  has distinct and odd parts, let $\C_{\lambda}^{+}$ and $\C_{\lambda}^{-}$ be the  two split  $A_n$-classes. If $n$ is odd, the following theorem determines the covering number of the two $A_n$-classes $\C_n^{\pm}$.

    \begin{theorem}\cite[Theorem 1]{lt}\label{larsen-tiep}
        Let $2\nmid n\ge 7$. Then $\C_{n}^{\pm}\C_{n}^{\pm}\supseteq A_n\setminus\{1\}$.
    \end{theorem}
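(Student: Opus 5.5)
The statement is quoted from Larsen and Tiep, but I would prove it directly with Frobenius' formula (Theorem \ref{Product_Character_Theory}) applied in $G=A_n$, with $C_1,C_2\in\{\C_n^+,\C_n^-\}$. The claim is that $N(x)>0$ for every $1\neq x\in A_n$.

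\emph{Characters of $A_n$ nonzero on $n$-cycles.} By the Murnaghan--Nakayama rule (Theorem \ref{MN_rule}), only the hooks $\chi_k:=\chi_{(n-k,1^k)}$ are nonzero on $w_{(n)}$, with $\chi_k(w_{(n)})=(-1)^k$. Since $n$ is odd, $(n-k,1^k)'=(k+1,1^{n-k-1})$.
\begin{itemize}
\item For $k\neq m:=\frac{n-1}{2}$, the pair $\chi_k,\chi_{n-1-k}$ has a common irreducible restriction to $A_n$. It takes the value $(-1)^k$ on both $\C_n^{\pm}$.
\item The self-conjugate hook $\chi_m$ splits as $\chi_m^{+}+\chi_m^{-}$, each of degree $\frac12\binom{n-1}{m}$. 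On $\C_n^{\pm}$ these take the values $\frac{\varepsilon\pm\sqrt{\varepsilon n}}{2}$, where $\varepsilon=(-1)^m$.
\item Off the two split classes, $\chi_m^{\pm}(x)=\frac12\chi_m(x)$.
\end{itemize}

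\emph{The positivity condition.} Up to the positive factor $|C_1||C_2|/|A_n|$, we get
\[
N(x)\;\propto\;\sum_{k=0}^{m-1}\frac{\chi_k(x)}{\binom{n-1}{k}}\;+\;\frac{2}{\binom{n-1}{m}}\sum_{\pm}\chi_m^{a}(w)\,\chi_m^{b}(w)\,\chi_m^{\pm}(x),
\]
where $w$ is an $n$-cycle and $a,b\in\{+,-\}$ record the choice of $C_1,C_2$.
\begin{itemize}
\item The $k=0$ term contributes $1$.
\item If $x$ is not an $n$-cycle, the last term is $O\bigl(n\,|\chi_m(x)|/\binom{n-1}{m}\bigr)$, which is tiny.
\item If $x$ is an $n$-cycle, all terms are explicit. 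One checks directly that $N(x)>0$, using $\sum_{k<m}(-1)^k/\binom{n-1}{k}$ together with the explicit quadratic irrationalities. This is the same computation that underlies Theorem \ref{Bertram} for $\C_n^2$.
\end{itemize}
So it remains to show that $\bigl|\sum_{k=1}^{m-1}\chi_k(x)/\binom{n-1}{k}\bigr|<1-(\text{small})$ for every $x\neq1$.

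\emph{Bounding the hook ratios.} Here I use that $\chi_k=\wedge^k V_{\mathrm{std}}$. This gives the generating function
\[
\sum_k \chi_k(x)(-t)^k=\frac{\prod_i\bigl(1-(-t)^{\lambda_i}\bigr)}{1+t},
\]
where $\lambda$ is the cycle type of $x$ (up to sign conventions). From it:
\begin{itemize}
\item $|\chi_k(x)|\le\binom{f+c-1}{k}$ roughly, where $f$ is the number of fixed points and $c$ the number of nontrivial cycles of $x$, with $f+c\le n-1$ when $x\neq1$.
\item For $k=1$ the ratio is exactly $\frac{f-1}{n-1}$.
\end{itemize}

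\emph{Main obstacle: $x$ with many fixed points.} When $x$ is a $3$-cycle or a double transposition, the ratios for small $k$ are close to $1$, so crude absolute bounds fail. Then I would compute $\chi_k(x)$ exactly from the generating function. For $x$ with support $s$ and $f=n-s$, one has
\[
\chi_k(x)=\sum_j a_j\binom{f-1}{k-j},
\]
where the $a_j$ are the coefficients of $\prod_i\bigl(1-(-t)^{\lambda_i}\bigr)$ over the nontrivial cycles of $x$. The $f$-dependence then sits inside binomial ratios $\binom{f-1}{k-j}/\binom{n-1}{k}$, which are decreasing in $k$. I expect the sum to telescope, or at least to be dominated termwise by a positive leading part with $j=0$ and alternating small corrections.

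I would split into two cases:
\begin{itemize}
\item \emph{$s\ge n/2$.} Use the crude bound. The series $\sum_{k\ge1}\binom{n-2}{k}/\binom{n-1}{k}$-type sums then give total error below $1$ once $n\ge7$.
\item \emph{$s<n/2$.} Use the exact expression, where the terms $\binom{f-1}{k}/\binom{n-1}{k}$ are all positive.
\end{itemize}
Positivity would then follow because all dominant terms share the sign of the $k=0$ term. Small cases $n=7,9$ may need a direct character table check.
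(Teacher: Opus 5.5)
The paper gives no proof of this statement; it is quoted from Larsen--Tiep. Judged on its own, your Frobenius-formula strategy is legitimate in principle. The reduction to the hooks $\chi_k$, $k<m$, and the split pair $\chi_m^{\pm}$ with values $\frac{\varepsilon\pm\sqrt{\varepsilon n}}{2}$ is correct, although in $A_n$ you must use $\overline{\chi(x)}$, which matters when $n\equiv 3\pmod 4$. But there is a genuine gap exactly where the work lies.

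\textbf{The split term is not tiny.} For $x$ not an $n$-cycle, the $\chi_m^{\pm}$-term equals $\frac{\chi_m(x)}{\binom{n-1}{m}}\cdot\frac{1\pm\varepsilon n}{2}$, with sign $+$ if $C_1=C_2$ and $-$ otherwise. So its coefficient is $-m$ when $C_1=C_2$ and $n\equiv 3\pmod 4$, or when $C_1\neq C_2$ and $n\equiv 1\pmod 4$. Moreover, $\chi_m(x)/\chi_m(1)$ is not small for small-support $x$. For a $3$-cycle it equals $\frac{m-2}{2(2m-1)}\to\frac14$; for fixed support and large $n$ it is roughly $2^{-(m_x-n_x)}$ whenever all nontrivial cycles of $x$ are odd. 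Hence a $3$-cycle picks up a split contribution of $-\frac{m(m-2)}{2(2m-1)}\approx-\frac n8$.

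\textbf{Your target inequality is false.} The bound $\bigl|\sum_{k=1}^{m-1}\chi_k(x)/\binom{n-1}{k}\bigr|<1$ fails for such $x$, since that sum grows like $\frac n4$. Concretely, take $n=11$, $x$ a $3$-cycle and $C_1=C_2$: the hook terms $k=1,\dots,4$ sum to $\frac53$, and the split term is $-\frac56$. Positivity is therefore a competition between two quantities of order $n$ with opposite signs. Your closing assertion that ``all dominant terms share the sign of the $k=0$ term'' overlooks precisely the negative one.

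\textbf{What is missing.} Put $r_k:=\chi_k(x)/\binom{n-1}{k}$. You need both
$\sum_{k=0}^{m-1}r_k>m\,r_m$ and $\sum_{k=0}^{m-1}r_k>-(m+1)\,r_m$
for every $1\neq x\in A_n$ that is not an $n$-cycle. For a $3$-cycle or a double transposition, your generating function shows that $r_k$ is strictly decreasing on $0\le k\le m$, which gives the first inequality. This is not a general mechanism, however: for $n\ge 9$, cycle type $(n-4,3,1)$ gives $r_1=r_2=0<r_3$. Saying ``I expect the sum to telescope'' does not supply the needed estimate.

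\textbf{The crude-bound branch also fails as stated.} Take $x$ a product of about $n/4$ transpositions, so its support is about $n/2$. Then $f+c-1\approx\frac{3n}{4}$, and $\sum_{k\ge1}\binom{f+c-1}{k}/\binom{n-1}{k}\approx 3$. So absolute-value bounds cannot push the error below $1$ in the range $s\ge n/2$. There, too, you need lower bounds that exploit the signs of the $\chi_k(x)$.

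Finally, Bertram's proof of Theorem \ref{Bertram} is constructive, so there is no ``same computation'' there to lean on for the $n$-cycle case.
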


    \noindent From the above theorem, $\C_{n}^{\pm 2}=A_n$, whenever $n\geq 9$ and $n\equiv 1\;(\Mod\;4)$. The following is a result of Brenner (see \cite{br}).

    \begin{theorem}\label{Brenner_two_orbits}
        Let $n\geq 5, \ell_1, \ell_2 \geq 2$, $\ell_1 + \ell_2 > \lfloor \tfrac{3}{4}n \rfloor + 3$, and $\lambda=(\ell_1,\ell_2)\vdash n$. Then $A_n = \C_{\lambda}^2$.
    \end{theorem}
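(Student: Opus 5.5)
The approach reduces the statement to Bertram's theorem (Theorem \ref{Bertram}) by splitting $n$-cycles with a transposition. Since $\lambda=(\ell_1,\ell_2)\vdash n$, an element of $\C_\lambda$ is a product of an $\ell_1$-cycle and a disjoint $\ell_2$-cycle covering all $n$ points, so every product of two elements of $\C_\lambda$ is even. Hence $\C_\lambda^2\subseteq A_n$, and we must show that every $\sigma\in A_n$ lies in $\C_\lambda^2$.

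The key elementary observation is the following. Let $\alpha=(a_1,a_2,\ldots,a_n)$ be an $n$-cycle and let $t=(a_i,a_{i+\ell_1})$, indices mod $n$. Then $\alpha t$ splits into an $\ell_1$-cycle and an $\ell_2$-cycle, so $\alpha t\in\C_\lambda$. Similarly, $t\beta\in\C_\lambda$ whenever the two points of $t$ lie at distance $\ell_1$ (equivalently $\ell_2$) along the cycle of an $n$-cycle $\beta$; the precise direction convention should be checked. Now suppose $\sigma=\alpha\beta$ with $\alpha,\beta\in\C_n$, and $t$ is such a ``compatible'' transposition for both $\alpha$ and $\beta$. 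Then $\sigma=(\alpha t)(t\beta)\in\C_\lambda^2$. Since $\lfloor 3n/4\rfloor\le n$ and $n\ge5$, Theorem \ref{Bertram} gives $\C_n^2=A_n$, so at least one factorization $\sigma=\alpha\beta$ always exists.

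The plan is therefore:
\begin{enumerate}
\item Fix $\sigma\in A_n$ and write $\sigma=\alpha\beta$ with $\alpha,\beta\in\C_n$, using Theorem \ref{Bertram}.
\item Show that $\alpha$ and $\beta$ can be chosen so that some pair $\{u,v\}$ lies at cyclic distance $\ell_1$ both in $\alpha$ and in $\beta$.
\item Conclude via the identity $\sigma=(\alpha t)(t\beta)$ with $t=(u,v)$.
\end{enumerate}

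Step 2 is where the real work lies, and I expect it to be the main obstacle. Theorem \ref{Bertram} only gives existence of one factorization, so we need enough freedom to move among factorizations. My first attempt is to exploit the explicit nature of Bertram-type factorizations. Decompose $\sigma$ into its cycles, and build $\alpha,\beta$ by the standard gluing construction, in which cycles of $\sigma$ are merged pairwise using products of the form $(\ldots)(\ldots)$ on disjoint blocks. Then track positions along $\alpha$ and $\beta$, so that a block of consecutive points of the right length appears in both. The parameter $\ell_1$ is flexible: replacing $\ell_1$ by $\ell_2=n-\ell_1$ is symmetric, so we may assume $\ell_1\le n/2$.

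If direct construction becomes messy, the fallback is a counting argument. Count the triples $(\alpha,\beta,t)$ with $\alpha\beta=\sigma$ via Theorem \ref{Product_Character_Theory} restricted to hooks, since only hook characters are nonzero on $n$-cycles. The total number of factorizations is then roughly $(n-1)!^2/n!\cdot(2+\text{error})$. Each $\alpha$ has exactly $n$ transpositions at distance $\ell_1$, or $n/2$ of them when $\ell_1=n/2$. Heuristically, the probability that such a transposition is also at distance $\ell_1$ in $\beta$ is about $2/n$, so the expected number of compatible pairs is of order $1$, and positive for $n$ large. This would need to be made rigorous, for instance by an averaging over conjugation by the centralizer of $\sigma$. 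Small $n$ would then be checked directly.
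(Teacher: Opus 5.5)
\textbf{There is a genuine gap: Step 2 is the whole theorem and is not proved.} Note that the paper does not prove this statement either; it quotes it from Brenner~\cite{br}.

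Your reduction is fine. Multiplying an $n$-cycle on either side by a transposition of two points at cyclic distance $\ell_1$ gives an element of type $(\ell_1,\ell_2)$. The direction convention is irrelevant, since distance $\ell_1$ one way is distance $\ell_2$ the other. So $\sigma=(\alpha t)(t\beta)$ holds as you say. But this only reformulates the theorem. Step 2 asks that every $\sigma\in A_n$ have a factorization $\sigma=\alpha\beta$ into $n$-cycles together with a pair $\{u,v\}$ at distance $\ell_1$ in both $\alpha$ and $\beta$. That step carries all of the content, and you give no argument for it.

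Nothing in your plan uses the hypothesis. Since $\ell_1+\ell_2=n$, the condition $\ell_1+\ell_2>\lfloor 3n/4\rfloor+3$ says exactly $n\geq 13$, and some such restriction is necessary. Take $n=6$: the outer automorphism of $S_6$ maps $\C_3$ to $\C_{(3,3)}$ and fixes $\C_{(4,2)}$. A product of two $3$-cycles moving six points has type $(3,3)$. Hence $(1,2,3,4)(5,6)\notin\C_{(3,3)}^2$, although $\C_6^2=A_6$. So for this $\sigma$ \emph{no} factorization into two $6$-cycles shares a pair at distance $3$. Step 2 is therefore false in general, and any proof of it must use $n\geq 13$ essentially.

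Neither of your suggested routes supplies this.
\begin{enumerate}
\item Bertram's gluing and lengthening moves give no control over which cyclic distances are realized by common pairs of $\alpha$ and $\beta$. You must hit a prescribed $\ell_1\in[2,n/2]$, so ``tracking positions'' is a hope rather than a mechanism.
\item The counting fallback is circular. The number of triples is $M(\sigma)=\sum_{\pi_1\pi_2=\sigma,\ \pi_i\in\C_\lambda}\#\{t\in\C_2:\ \pi_1t\in\C_n,\ t\pi_2\in\C_n\}$, so $M(\sigma)>0$ is a strengthening of the theorem. It is also not a class-product count to which Theorem~\ref{Product_Character_Theory} applies, because the condition on $t$ couples it to both factors.
\item The independence heuristic ignores that $\beta=\alpha^{-1}\sigma$ is determined by $\alpha$. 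It predicts a positive count for $n=6$, $\ell_1=3$ as well, where the true count is zero.
\item Averaging over $C_{S_n}(\sigma)$ is vacuous. Simultaneous conjugation by the centralizer permutes the triples for $\sigma$ and leaves $M(\sigma)$ unchanged.
\end{enumerate}

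What is missing is a real argument for each cycle type of $\sigma$ in which $n\geq 13$ genuinely enters. One option is to produce explicit elements of $\C_\lambda$ or a compatible factorization, for example using Bertram-type inequalities $(m_\sigma+n_\sigma)/2\le \ell$ applied to pieces of $\sigma$. The other is a genuine character estimate for $\C_\lambda^2$ itself.
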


    \noindent Note that $\C_{(n-2,2)}$ is a conjugacy class of $A_n$ whenever $n$ is even. The previous theorem shows that $\C_{(n-2,2)}^2=A_n$ whenever $n$ is even and $n\geq 14$. For $n\in \{6,8,10,12\}$, $\C_{(n-2,2)}^2=A_n$ holds as well and can be verified by \textsf{GAP}. Thus, combining these cases, we can conclude that there exists a conjugacy class of derangements $C$ of $A_n$ such that $C^2=A_n$, whenever $n\not\equiv 3\;(\Mod\;4)$. From Theorem \ref{prod_classes_alternating}, we conclude that $\C_{(n-4,2^2)}^2=A_n$, whenever $n$ is odd and $n\geq 15$, thereby proving the conjecture of Burness-Fusari. When $n\in \{7,11\}$, once again a check using \textsf{GAP} yields that $\C_{(n-4,2,2)}^2=A_n$. Overall, there exists a conjugacy class $C$ of derangements in $A_n$ such that $C^2 = A_n$, whenever $n\geq 6$. The proof of Theorem \ref{prod_classes_alternating} is constructive, and we use the ideas of Bertram's constructive proof of Theorem \ref{Bertram}. For that reason, we briefly mention the key ideas of Bertram's proof.

    \subsection{Bertram's construction} Let $l\geq 2$. We outline the proof of the following more general theorem of \cite{be} which yields Theorem \ref{Bertram}.

    \begin{theorem}\label{Bertram_intermediate}
     Let $n\geq 5$ and $\sigma \in A_n$. Then $\sigma$ can be written as a product of two $l$-cycles where $\dfrac{m_{\sigma}+n_{\sigma}}{2}\leq l\leq n$.
    \end{theorem}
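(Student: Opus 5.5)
We will prove Theorem \ref{Bertram_intermediate} by a direct construction. Everything reduces to two basic gadgets, each realised as a product of two cycles of a prescribed length. We then glue the gadgets together, one for each non-trivial cycle (or suitable pair of cycles) of $\sigma$, and pad with fixed points of $\sigma$ when $l$ exceeds the length the gadgets naturally produce.

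\textbf{Step 1: local building blocks.} For a single cycle of odd length $m$, observe that
\[
(a_1,\dots,a_m)=(a_1,a_3,\dots,a_m,a_2,a_4,\dots,a_{m-1})^2 .
\]
So an odd $m$-cycle is the square of an $m$-cycle. More flexibly, it is a product of two $\frac{m+1}{2}$-cycles that overlap in exactly one symbol:
\[
(a_1,\dots,a_m)=(a_1,\dots,a_k)(a_k,\dots,a_m).
\]
Two cycles of even lengths $m_1+m_2$ together have support $m_1+m_2$ and contribute $2$ to $n_\sigma$. We will show that their product is a product of two cycles of length $\frac{m_1+m_2+2}{2}$, again using the ``split a cycle at a shared point'' identity, now with two overlapping points linking the two cycles.

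\textbf{Step 2: gluing.} Decompose $\sigma$ into its odd cycles and pairs of even cycles; this is possible because $\sigma\in A_n$, so there is an even number of even cycles. For each block $B$ we have written $\sigma|_B=\alpha_B\beta_B$, where $\alpha_B$ and $\beta_B$ are cycles of length $\frac{|B|+c_B}{2}$ and $c_B\in\{1,2\}$ is the number of non-trivial cycles of $\sigma$ in $B$. Disjoint blocks commute. The key identity is the following: if $\alpha=(x,\dots)$ and $\alpha'=(y,\dots)$ are disjoint cycles, then inserting the transposition $(x,y)$ merges them into one cycle. We therefore multiply $\alpha_{B_1}\alpha_{B_2}\cdots$ on the right by a suitable product of transpositions, and compensate in the $\beta$'s on the left. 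This lets us arrange that
\[
\alpha=\prod_B\alpha_B\,\tau \quad\text{and}\quad \beta=\tau^{-1}\prod_B\beta_B
\]
are each single cycles of length $\sum_B\frac{|B|+c_B}{2}=\frac{m_\sigma+n_\sigma}{2}$.

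Bertram's trick is to choose the joining symbols cleverly, namely the last symbol of $\beta_{B_i}$ and the first symbol of $\alpha_{B_{i+1}}$. Then the product telescopes: $\alpha=(\text{block }1\text{ part},\ \text{block }2\text{ part},\dots)$ and $\beta$ is similarly concatenated, with $\alpha\beta=\sigma$ verified block by block.

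\textbf{Step 3: increasing $l$.} To go from $l_0=\lceil\frac{m_\sigma+n_\sigma}{2}\rceil$ up to any $l\le n$, we use fixed points of $\sigma$ and, where these run out, redistribute the overlaps. A fixed point $f$ can be inserted into $\alpha$ right after some symbol $x$ and into $\beta$ right before $x$, so that $f\mapsto x\mapsto f$ cancels. This raises both lengths by one while preserving the product. When no fixed points remain, we instead enlarge the overlap inside a block, using $(a_1,\dots,a_m)$ as a product of two longer overlapping cycles, for instance $m$-cycles. The main obstacle is this last step: realising every intermediate $l$ up to $n$ once the support is exhausted. This requires a parity and counting check, since each overlap adjustment within an odd cycle of length $m$ lets the cycle length range over $\frac{m+1}{2},\dots,m$. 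We will handle it by adjusting one block at a time, and we expect a few small cases ($n\ge5$, and $\sigma=1$ or a $3$-cycle) to need separate direct verification.
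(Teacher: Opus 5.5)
Your strategy is the paper's (Bertram's). You build gadgets for a single odd cycle and for a pair of even cycles, glue them into two cycles of length $\frac{m_\sigma+n_\sigma}{2}$, and then lengthen by inserting each fixed point $f$ of $\sigma$ right after a common letter $x$ in $\alpha$ and right before $x$ in $\beta$. Your gluing by transpositions is fine provided the joining symbols are taken from $\supp(\alpha_B)\cap\supp(\beta_B)$, which is nonempty in every gadget.

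The genuine gap is exactly the step you flag as the main obstacle. Inserting all $n-m_\sigma$ fixed points only brings you to $l=n-\frac{m_\sigma-n_\sigma}{2}$. Your plan for the remaining lengths is not carried out: it involves re-splitting each block into longer overlapping cycles for every intermediate length, a parity/counting check, re-gluing, and unspecified small cases. Block by block, it amounts to re-proving the statement itself.

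The missing idea is a uniform swap move. Suppose $a\in\supp(\alpha)\setminus\supp(\beta)$ and $b\in\supp(\beta)\setminus\supp(\alpha)$. Insert $b$ immediately after $a$ in $\alpha$, and $a$ immediately before $b$ in $\beta$. A direct check shows the product $\alpha\beta$ is unchanged while both cycle lengths increase by one. Once $\supp(\alpha)\cup\supp(\beta)=\{1,\dots,n\}$ and both cycles have length $l<n$, the overlap has size $2l-n$. So each cycle has exactly $n-l>0$ private symbols, and the move can be repeated until $l=n$. This is how the paper finishes, and it needs no parity argument and no exceptional small cases.

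Step 1 is also incomplete. For two even cycles you only assert that a product of two $(r+s+1)$-cycles exists, without giving a construction. The paper uses
\begin{multline*}
(x_1,\dots,x_{2r})(y_1,\dots,y_{2s})\\
=(y_{s+1},\dots,y_{2s},y_1,x_2,\dots,x_{r+1})(x_{r+1},\dots,x_{2r},x_1,y_1,\dots,y_s)
\end{multline*}
(composing right to left), which also covers the cases $r=1$ or $s=1$.

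Separately, your square-root identity is wrong. For $c=(a_1,\dots,a_m)$, the cycle $(a_1,a_3,\dots,a_m,a_2,\dots,a_{m-1})$ equals $c^2$, whose square is $c^4\neq c$ for $m\geq 5$. The correct square root is $c^{(m+1)/2}=(a_1,a_{(m+3)/2},a_2,a_{(m+5)/2},\dots)$. Once you use the swap move, you do not need this identity anyway.
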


    First we outline the proof for $l=\dfrac{m_{\sigma}+n_{\sigma}}{2}$. Since $\sigma\in A_n$, the dcd of $\sigma$ consists of odd-length cycles and an even number of even-length cycles. Keeping this in mind we do the construction into two steps. First, we do the construction for the two basic fragments: (1) a single odd-length cycle, (2) the product of two even-length cycles. In the second step, we show how to glue these pieces together to get the construction for a general element.

    \medskip

    \noindent \textbf{Step 1:} Writing the basic fragments into product of two $l$-cycles.

    \begin{enumerate}
        \item Let $\sigma=(x_1,x_2,\dots,x_{2k+1})$, where $k\geq 1$, that is, $\sigma$ is a odd-length cycle of length $\ge 3$. Then,
        $$\sigma=(x_{k+2},x_{k+3},\ldots, x_{2k+1},x_1)(x_1,x_2,\ldots,x_{k+1}).$$
        For example, $(1\;2\;3\;4\;5\;6\;7\;8\;9)=(6\;7\;8\;9\;1)(1\;2\;3\;4\;5)$.

        \item  Let $\sigma=(x_1,x_2,\dots,x_{2r})(y_1,y_2,\dots,y_{2s})$. Here we have to make three subcases.

       \noindent \textbf{Subcase-I}: Assume that $r,s\geq 2$, that is, $\sigma$ is a disjoint product of two even-length cycles, both of length $\geq 4$. Then,
       $$\sigma=(y_{s+1},\ldots,y_{2s},y_1,x_2,\ldots,x_{r+1})(x_{r+1},x_{r+2},\ldots,x_{2r},x_1,y_1,y_2,
       \ldots,y_s).$$
       For example, $(1,2,3,4,5,6)(7,8,9,10)=(9,10,7,2,3,4)(4,5,6,1,7,8)$.

       \noindent \textbf{Subcase-II}: Let $r=s=1$, that is, $\sigma$ is a disjoint product of two transpositions. Then,
       $$\sigma=(y_2,y_1,x_2)(x_2,x_1,y_1).$$
       For example, $(1,2)(3,4)=(4,3,2)(2,1,3)$.

       \noindent \textbf{Subcase-III}: Assume $r\geq 2$ and $s=1$, that is, $\sigma$ is a disjoint product of two even-length cycles, one being of length $\geq 4$ and the other being a transposition. Then,
       $$\sigma=(y_2,y_1,x_2,\ldots,x_{r+1})(x_{r+1},x_{r+2},\ldots,x_{2r},x_1,y_1).$$
       For example, $(1,2,3,4)(5,6)=(6,5,2,3)(3,4,1,5)$.
    \end{enumerate}
    We notice that if $\sigma=\pi\tau$, where $\pi$ and $\tau$ are obtained as above, then $l(\pi)=l(\tau)=\dfrac{m_{\sigma}+n_{\sigma}}{2}$, as required. Now we need to glue these ``basic fragments'' together, which is illustrated in the next step.

    \medskip

    \noindent \textbf{Step 2}: Let $\sigma=\sigma_1\sigma_2\cdots\sigma_k$ where $\sigma_i$ are the ``basic fragments'' of the above step (which are obtained from the dcd of $\sigma$). Let $\sigma_i=\pi_{i}\tau_{i}$, where $\pi_{i}$ and $\tau_{i}$ are both cycles of length $l_i=\dfrac{m_{\sigma_i}+n_{\sigma_i}}{2}$. Then, $\sigma=\pi\tau$, where $\pi=(\pi_{1},\pi_{k},\cdots,\pi_{2})$ and $\tau=(\tau_{1},\tau_{2},\cdots, \tau_{k})$, where $(\pi_{1},\pi_{k},\cdots,\pi_{2})$ is simply concatenating the symbols of $\pi_{1}$, $\pi_{k}$, $\cdots$, $\pi_{2}$ and the same applies to $\tau$. We illustrate this via several examples.

    \noindent If $\sigma=(1,2,3,4,5)(6,7,8,9,10,11,12)$, then $\sigma_1=(1,2,3,4,5)$ and $\sigma_2=\allowbreak(6,7,8,9,10,11,12)$. $\sigma_1=\pi_{1}\tau_{1}=(4,5,1)(1,2,3)$ and $\sigma_2=\pi_{2}\tau_{2}=(10,11,12,6)(6,7,8,9)$. Then, 
    $$\sigma=\pi\tau=(4,5,1,10,11,12,6)(1,2,3,6,7,8,9).$$

    \noindent If $\sigma=(1,2,3,4,5)(6,7,8,9)(10,11,12,13,14,15)$, then $\sigma_1=(1,2,3,4,5)$ and $\sigma_2=(6,7,8,9)\allowbreak(10,11,12,13,14,15)$. So, $\sigma_1=\pi_{1}\tau_{1}=(4,5,1)(1,2,3)$ and $\sigma_2=\pi_{2}\tau_{2}=(13,14,15,10,7,8)\allowbreak(8,9,6,10,11,12)$. Then, 
    $$\sigma=\pi\tau=(4,5,1,13,14,15,10,7,8)(1,2,3,8,9,6,10,11,12).$$

    \noindent If $\sigma=(1,2,3,4,5)(6,7,8,9)(10,11)(12,13)(14,15)$, then $\sigma_1=(1,2,3,4,5)$ and $\sigma_2=(6,7,8,9)\allowbreak(10,11)$ and $\sigma_3=(12,13)(14,15)$. $\sigma_1=\pi_{1}\tau_{1}=(4,5,1)(1,2,3)$, $\sigma_2=\pi_{2}\tau_{2}=(11,10,7,8)\allowbreak(8,9,6,10)$, and $\sigma_3=\pi_{3}\tau_{3}=(15,14,13)(13,12,14)$. Then, 
    $$\sigma=\pi\tau=(4,5,1,15,14,13,11,10,7,8)(1,2,3,8,9,6,10,13,12,14).$$

    \noindent Thus, the above construction allows us to write $\sigma$ as a product of two $l$-cycles where $l=\dfrac{m_{\sigma}+n_{\sigma}}{2}$. 

    \medskip

    Assume now that $\sigma\in A_n$ and $\sigma=\pi\tau$ where both $\pi$ and $\tau$ are cycles of length $\dfrac{m_{\sigma}+n_{\sigma}}{2}$. Now we can increase the length of $\pi$ and $\tau$ one at a time without changing the product $\pi\tau=\sigma$. We can do this till both $\pi$ and $\tau$ have length $n$. We explain this process of lengthening of cycles in brief. We need the following observation.
    \begin{observation}
        In the Bertram's construction, since we need $\ell \geq \frac{m_\sigma+ n_\sigma}{2}$, we always have $2\ell \geq m_\sigma +n_\sigma > m_\sigma$. So, if $\sigma = \pi \tau$, where $\pi$ and $\tau$ are both cycles of equal length obtained using the Bertram's construction, then $\pi$ and $\tau$ must have at least one common letter.
    \end{observation}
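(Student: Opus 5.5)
The plan is a short counting argument that compares the size of $\supp(\sigma)$ with $2\ell$. Throughout I will assume that $\sigma\neq 1$, which is the situation of Bertram's construction: the basic fragments come from non-trivial cycles in the dcd of $\sigma$. Then $n_\sigma\geq 1$, and the hypothesis $\ell\geq \frac{m_\sigma+n_\sigma}{2}$ gives the strict inequality
$$2\ell\;\geq\; m_\sigma+n_\sigma\;>\;m_\sigma.$$
This inequality is the only quantitative input the argument needs.

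I will argue by contradiction. Suppose $\sigma=\pi\tau$, where $\pi$ and $\tau$ are $\ell$-cycles with $\ell\geq 2$ and $\supp(\pi)\cap\supp(\tau)=\emptyset$. Then $\pi$ and $\tau$ commute, and $\pi\tau$ is already written in disjoint cycle form: it is the product of the two disjoint $\ell$-cycles $\pi$ and $\tau$. Every letter of $\supp(\pi)\cup\supp(\tau)$ is therefore moved by $\sigma$, and every letter outside this union is fixed by $\sigma$. Consequently $\supp(\sigma)=\supp(\pi)\sqcup\supp(\tau)$, and so $m_\sigma=2\ell$.

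This contradicts $2\ell>m_\sigma$, so $\pi$ and $\tau$ must share at least one letter. There is no real obstacle here. The only points needing care are the following:
\begin{itemize}
\item the hypothesis $\sigma\neq 1$, which guarantees $n_\sigma\geq1$ and hence the strict inequality;
\item the fact that a product of two disjoint non-trivial cycles moves exactly the union of their supports.
\end{itemize}
Both are immediate from the definitions of $m_\sigma$ and $n_\sigma$ and of the disjoint cycle decomposition. The observation is what will later let us lengthen $\pi$ and $\tau$ by inserting a new letter next to a common letter.
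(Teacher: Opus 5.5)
Your proof is correct and follows the same route as the paper. The paper's only justification is the inequality $2\ell \geq m_\sigma + n_\sigma > m_\sigma$; you supply the implicit step that two disjoint $\ell$-cycles would force $m_\sigma = 2\ell$, and you make explicit that $\sigma \neq 1$ (so $n_\sigma \geq 1$), which the strict inequality needs but the paper leaves unstated.
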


    \noindent Suppose $\pi=(a_1,a_ 2,\ldots, a_r)$ and $\tau=(b_1,b_2,\ldots, b_r)$. Suppose $a_i\notin \supp(\tau)$ and $b_j\notin \supp(\pi)$, then define $\pi'=(a_1,\ldots,a_i,b_j,\ldots,a_r)$ and $\tau'=(b_1,\ldots,a_i,b_j,\ldots,b_r)$. Then, it is clear that $\pi\tau=\pi'\tau'$. Now assume that there exists a letter $c$ such that $c\notin \supp(\pi)\cup \supp(\tau)$. Then, by the observation made above, there exists $i$ and $j$ such that $a_i=b_j$. Define $\pi'=(a_1,\ldots,a_i,c,\ldots,a_r)$ and $\tau'=(b_1,\ldots,c,b_j,\ldots,b_r)$. Once again it is clear that $\pi\tau=\pi'\tau'$. We give an example to illustrate this lengthening process.

    \noindent Let $(1,2,3,4,5)\in A_{8}$. Then, $\sigma=\pi\tau$, where $\pi=(4,5,1)$ and $\tau=(1,2,3)$. Now,
    \begin{align*}
        (1,2,3,4,5)=\pi\tau&=(4,5,2,1)(1,5,2,3)=(4,3,5,2,1)(1,5,2,4,3)\\
        &=(4,3,5,2,6,1)(1,5,6,2,4,3)\\
        &=(4,7,3,5,2,6,1)(1,5,6,2,7,4,3)\\
        &=(4,7,3,8,5,2,6,1)(1,5,6,2,7,4,8,3)
    \end{align*}
    From the construction, it is clear that there are many choices in which we may proceed to lengthen the cycles. Now we are ready to prove Theorem \ref{prod_classes_alternating}. We prove it through a series of lemmas. We introduce a notation for our convenience. Let $\sigma\in S_n$. Denote by $\dcd^*(\sigma)$ the set of non-trivial cycles occuring (that is, cycles of length greater than 1) in the disjoint cycle decomposition of $\sigma$.
    
    \begin{lemma}\label{lemma: large odd cycle}
        Let $n\geq 5$, $k\geq1$ and $\lambda = (\ell, 2^k, 1^{n-\ell-2k})$. Let $\sigma \in A_n$ be such that $\ell \geq \frac{m_\sigma + n_\sigma}{2}$. If $\ell +2k-1 \leq m_\sigma \leq n$ and there exists a cycle of odd length $t \geq 2k +3$ in $\dcd^*(\sigma)$, then $\sigma \in \C_\lambda^2$.
    \end{lemma}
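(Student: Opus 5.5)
The plan is to reduce to Bertram's construction (Theorem \ref{Bertram_intermediate} together with the lengthening procedure) by peeling $k$ transpositions off each side of the long odd cycle. Write $\sigma=c\,\sigma'$, where $c=(x_1,\ldots,x_t)\in\dcd^*(\sigma)$ has odd length $t\geq 2k+3$ and $\sigma'$ is the product of the remaining nontrivial cycles. Inside $\supp(c)$ we look for a factorisation
\[
c=T_1\,\rho\,T_2 ,
\]
where $T_1,T_2$ are each products of $k$ disjoint transpositions and $\rho$ is a single cycle of odd length $t-2k\geq 3$. The guiding identity is that a $t$-cycle is a $(t-j)$-cycle times $j$ suitable transpositions, for instance $(x_1,\ldots,x_t)=(x_1,x_3,\ldots)(x_1,x_2)\cdots$. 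Applying this on both sides, each side absorbing $k$ consecutive letters, we expect explicit formulas; for example, take $T_2=(x_1,x_2)(x_3,x_4)\cdots(x_{2k-1},x_{2k})$ and a $T_1$ built symmetrically at the other end of the cycle. We check these by direct computation of images, in the style of Step 1 of Bertram's construction. The condition $t\geq 2k+3$ should be exactly what keeps $\rho$ a genuine cycle of length at least $3$.

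Since $T_1,T_2$ are supported in $\supp(c)$, they commute with $\sigma'$, so
\[
\sigma=T_1(\rho\sigma')T_2 .
\]
The element $\rho\sigma'$ is even, and
\[
m_{\rho\sigma'}+n_{\rho\sigma'}\leq m_\sigma-2k+n_\sigma,
\]
with equality when $\supp(\rho)$ has $t-2k$ letters. Hence
\[
\frac{m_{\rho\sigma'}+n_{\rho\sigma'}}{2}\leq \ell-k\leq \ell .
\]
By Bertram's construction we can write $\rho\sigma'=\pi\tau$ with $\pi,\tau$ cycles of the minimal length, and then lengthen them to length $\ell$. We then set
\[
\alpha=T_1\pi,\qquad \beta=\tau T_2,
\]
so that $\sigma=\alpha\beta$. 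For $\alpha,\beta\in\C_\lambda$ we need $\pi$ disjoint from $\supp(T_1)$ and $\tau$ disjoint from $\supp(T_2)$.

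The main obstacle is this disjointness. The fragment of $\rho$ produced by Bertram already avoids $\supp(T_1)\cup\supp(T_2)$, since those letters are fixed by $\rho\sigma'$. The difficulty is the lengthening step, where new letters are inserted. First, we would choose $T_1,T_2$ so that the letters available for lengthening, namely those outside $\supp(\sigma)$ and those not forbidden on the relevant side, are sufficient. The lengthening move $a_i\notin\supp(\tau)$, $b_j\notin\supp(\pi)$ lets $\pi$ borrow letters of $\tau$ and conversely. We may therefore insert the letters of $\supp(T_2)$ into $\pi$ only, and those of $\supp(T_1)$ into $\tau$ only, provided we track that the product is unchanged. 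Here we would use the inserting move with a common letter $a_i=b_j$, which exists by the Observation, applied to a letter of $\supp(T_2)$ placed only in $\pi$. This needs a modified one-sided version of the move, which we would verify directly.

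The counting hypothesis $\ell+2k-1\leq m_\sigma\leq n$ should be what guarantees that the total number of letters works out. Each of $\alpha,\beta$ needs $\ell+2k$ letters. The $m_\sigma$ moved letters, together with the fresh letters of $\{1,\ldots,n\}\setminus\supp(\sigma)$, must supply them, with at most one letter needing to come from outside $\supp(\sigma)$ beyond those already in play. We expect to split into the cases $m_\sigma\geq \ell+2k$ and $m_\sigma=\ell+2k-1$, the latter requiring one fresh letter, which exists since $\ell+2k\leq n$. In the borderline case we would try first to take $T_1$ and $T_2$ sharing as many letters as possible, so that one fewer letter is needed.
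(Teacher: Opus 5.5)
\textbf{There is a genuine gap, and it is a counting obstruction rather than a missing detail.} Your scheme has $T_1,T_2$ supported in $\supp(c)$, $\rho=T_1cT_2$ a single $(t-2k)$-cycle, $\rho\sigma'=\pi\tau$, $\alpha=T_1\pi$ and $\beta=\tau T_2$. In it one always has $\ell\le n-2k-1$, however $T_1,T_2,\pi,\tau$ are chosen.

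To see this, let $F$ be the set of $2k$ points of $\supp(c)$ fixed by $\rho$.
\begin{itemize}
\item Since $c=T_1\rho T_2$ moves every point of $\supp(c)$, we have $F\subseteq\supp(T_1)\cup\supp(T_2)$.
\item Points of $F$ are fixed by $\pi\tau$. A fixed point of a product $\pi\tau$ lies in both supports or in neither. Since $\supp(\pi)\cap\supp(T_1)=\emptyset=\supp(\tau)\cap\supp(T_2)$, no point of $F$ lies in $\supp(\pi)\cup\supp(\tau)$.
\item Hence $\supp(\pi)$ avoids $\supp(T_1)\cup F$ and $\supp(\tau)$ avoids $\supp(T_2)\cup F$. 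Each of these sets has at least $2k$ elements.
\item Both have exactly $2k$ elements only if $\supp(T_1)=\supp(T_2)=F$. But then $c=T_1\rho T_2$ would map $F$ onto itself, which is impossible for a $t$-cycle. So one of $\pi,\tau$ misses at least $2k+1$ letters, giving $\ell\le n-2k-1$.
\end{itemize}
Yet the lemma includes $\ell+2k=n$, which forces $m_\sigma\in\{n-1,n\}$. For example, $\sigma=(1,2,3,4,5)$, $\lambda=(3,2)$, $n=5$ meets every hypothesis. The slack of $k$ you gain in length is irrelevant; the letter budget is what binds.

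The same argument refutes two of your intermediate claims:
\begin{itemize}
\item $\supp(T_1)\cup\supp(T_2)\not\subseteq F$, so Bertram's factors of $\rho\sigma'$ do meet $\supp(T_1)\cup\supp(T_2)$.
\item Inserting a letter fixed by $\rho\sigma'$ into only one of $\pi,\tau$ necessarily changes the product, so the ``one-sided'' move cannot exist.
\end{itemize}
Separately, your sample $T_2=(x_1,x_2)\cdots(x_{2k-1},x_{2k})$ admits a compatible $T_1$ only when $t\ge 3k$.

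\textbf{How the paper avoids this.} It does not require the middle factor to be a single $(t-2k)$-cycle. With $c=(1,\dots,t)$ it writes $c=\rho_1\rho_2$, where
\[
\rho_1=(\tfrac{t+3}{2},\dots,t-k,k+2,1)(2,t)(3,t-1)\cdots(k+1,t-k+1),
\]
\[
\rho_2=(1,t,k+2,\dots,\tfrac{t+1}{2})(2,t-1)\cdots(k+1,t-k).
\]
Write $\rho_1=T_1'L_1$ and $\rho_2=L_2T_2'$, separating transposition parts from long cycles. Then still $c=T_1'(L_1L_2)T_2'$, but now:
\begin{itemize}
\item $L_1L_2$ moves $t-2k+1$ letters of $\supp(c)$.
\item Its fixed points there are exactly the $2k-1$ letters of $\supp(T_1')\cap\supp(T_2')$, because each long cycle reuses a letter of the other factor's transpositions ($t-k\in L_1$, $t\in L_2$).
\end{itemize}
Thus no letter is lost, and each $\rho_i$ has type $(\tfrac{t-2k+3}{2},2^k)$. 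Concatenating with Bertram's factorisation of $\sigma c^{-1}$ at a common letter gives long cycles of length $\tfrac{m_\sigma+n_\sigma-2k+2}{2}\le\ell$. These can still be lengthened up to $n-2k$.

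For your test case this gives $(1,2,3,4,5)=(4,3,1)(2,5)\cdot(1,5,3)(2,4)$. Here $L_1L_2=(1,5)(3,4)$, so this factorisation has a shape your construction cannot produce.
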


    \begin{proof}
        Without loss of generality, let the cycle of odd length $t \geq 2k +3$ in $\dcd^*(\sigma)$ be $\sigma_1=(1,2,\dots, t)$. We first write $\sigma_1$ as $\sigma = \rho_1 \rho_2$, where  $\rho_1 = \left(\frac{t+3}{2}, \frac{t+5}{2}, \cdots, t-k, k+2,1\right)(2,t)(3,t-1) \cdots  (k+1, t-k+1)$, $\rho_2 = \left(1,t,k+2,k+3,\cdots,\frac{t+1}{2}\right)(2,t-1)(3,t-2) \cdots  (k+1, t-k).$ Observe that $\rho_1$ and $\rho_2$ have $k$-many $2$-cycles and one cycle of length $\ell_1:=\frac{t-2k+3}{2}$.

        \medskip

        We first assume that $\ell +2k \leq m_\sigma \leq n$. With this assumption, if $\sigma \in \C_\lambda^2$ in $A_{m_\sigma}$, then $\sigma \in \C_\lambda^2$ in $A_{n}$, for any $n \geq m_\sigma$. Thus, it is enough to assume that $\ell +2k \leq m_\sigma=n$. Now, the permutation $\sigma\sigma_1^{-1}$ has $m_{\sigma\sigma_1^{-1}}= m_\sigma-t$ and $n_{\sigma\sigma_1^{-1}}= n_\sigma -1$. So, $\sigma\sigma_1^{-1}$ can be written as a product of two cycles of length $\ell_2:=\frac{m_\sigma+n_\sigma-t-1}{2}$, using Bertram's construction. Let these cycles be $\pi_1 = (a_1,a_2,\dots, a_{\ell_2-1},a)$ and $\pi_2 =(a,b_2,b_3,\dots, b_{\ell_2})$, where $a$ is any symbol common in both $\pi_1$ and $\pi_2$. Then we write $\sigma$ as a product of $\rho$ and $\tau$, where $\rho= \left(a_1,\cdots,a_{\ell_2-1},a,\frac{t+3}{2}, \frac{t+5}{2}, \cdots, t-k, k+2,1\right)(2,t)(3,t-1) \cdots  (k+1, t-k+1)$, and $\tau = \left(1,t,k+2,k+3,\cdots,\frac{t+1}{2},a,b_2,\cdots,b_{l_2}\right)(2,t-1)(3,t-2) \cdots  (k+1, t-k)$. Thus $\sigma \in \C_{\lambda_1}^2$, whenever $\lambda_1 = (\ell', 2^k, 1^{n-\ell'-2k})$, $\ell':= \ell_1+\ell_2=\frac{m_\sigma+n_\sigma-2k+2}{2}$. Further, the length $\ell'$ can be increased to $n-2k$ by applying the lengthening process described above. Hence, $\sigma \in \C_{\lambda_1}^2$, whenever $\lambda_1 = (\ell', 2^k, 1^{n-\ell'-2k})$, $\ell'\geq \frac{m_\sigma+n_\sigma-2k+2}{2}$. Since $k\geq 1$, $\frac{m_\sigma+n_\sigma-2k+2}{2} \leq \frac{m_\sigma+n_\sigma}{2}$, and hence $\sigma \in \C_{\lambda}^2$, whenever $\lambda= (\ell, 2^k, 1^{n-\ell-2k})$, $\ell\geq \frac{m_\sigma+n_\sigma}{2}$.

        \medskip

        We now assume that $m_\sigma = \ell +2k-1$. For that, it is enough to assume that $\ell +2k =n$ and $m_\sigma = n-1$. In which case, $\sigma$ has a fixed point in $A_n$, let it be $\alpha$. We first write $\sigma\sigma_1^{-1}$ as a product of two cycles of length $\ell_1:=\frac{m_\sigma+n_\sigma-t-1}{2}$, using Bertram's construction. Then we add the point $\alpha$ in these two cycles, similar to Bertram's construction so that the product fixes $\alpha$. Now, the new length of these cycles is $\frac{m_\sigma+n_\sigma-t-1}{2} +1 = \frac{m_\sigma+n_\sigma-t+1}{2}$. We then proceed as above and get the expression of $\sigma$, showing that $\sigma \in \C_{\lambda_1}^2$, whenever $\lambda_1 = (\ell', 2^k, 1^{n-\ell'-2k})$, $\ell'\geq \frac{m_\sigma+n_\sigma-2k+4}{2}$. If $k\geq 2$, $\frac{m_\sigma+n_\sigma-2k+4}{2} \leq \frac{m_\sigma+n_\sigma}{2}$, and hence $\sigma \in \C_{\lambda}^2$, whenever $\lambda= (\ell, 2^k, 1^{n-\ell-2k})$, $\ell\geq \frac{m_\sigma+n_\sigma}{2}$. Thus, we can now assume that $k=1$, $n=\ell+2$, and $m_\sigma =n-1$. Additionally, for the given $\lambda = (\ell, 2^k, 1^{n-\ell-2k})$, if $\ell$ is such that $\ell-1 \geq \frac{m_\sigma + n_\sigma}{2}$, then we let $\lambda_0 = (\ell_0, 2, 1^{n-\ell_0-2})$, where $\ell_0 =\ell -1$. Hence, $\ell_0+2 = m_\sigma = n-1$. We proceed similar to the previous case ($\ell +2k \leq m_\sigma \leq n$) and conclude that $\sigma \in \C_{\lambda_0}^2$ in $A_{n-1}$, where $\lambda_0= (\ell_0, 2^k, 1^{n-1-\ell_0-2k})$, $\ell\geq \frac{m_\sigma+n_\sigma}{2}$. Let $\sigma = \pi\tau$, where $\pi,\tau \in \C_{\lambda_0}$. Let $\pi_0$ and $\tau_0$ be the cycles of length $\ell_0=\ell-1$ in permutations $\pi$ and $\tau$, respectively. We now add the point $\alpha$ in $\pi_0$ and $\tau_0$ such that their length increases to $\ell$. Let $\pi_1$ and $\tau_1$ be the cycles obtained after increasing the length of $\pi_0$ and $\tau_0$, respectively. Consequently, $\sigma = \pi\tau= \pi_1\pi_0^{-1}\pi\tau_1\tau_0^{-1}\tau \in \C_{\lambda}^2$. Finally, we are left with the assumptions $n=\ell+2$, $m_\sigma =n-1$, and $\lambda = (\ell, 2^k, 1^{n-\ell-2k})$ are such that $\ell \geq \frac{m_\sigma + n_\sigma}{2} > \ell-1 \implies \ell = \frac{m_\sigma + n_\sigma}{2} \implies n-2 = \frac{n-1 + n_\sigma}{2} \implies n_\sigma = n-3$. We always have $m_\sigma \geq 2n_\sigma \implies n-1 \geq 2(n-3) =2n-6 \implies n \leq 5$. Thus, we are left to verify this case only for $\sigma \in A_5$ with $m_\sigma =4$, $n_\sigma =2$, $\lambda = (3,2)$, but here $\sigma$ is a product of two $2$-cycles which contains no cycle of length $2k+3 =5$.
    \end{proof}

    \begin{lemma}\label{lemma: large even cycle}
        Let $n\geq 5$, $k\geq1$ and $\lambda = (\ell, 2^k, 1^{n-\ell-2k})$. Let $\sigma \in A_n$ be such that $\ell \geq \frac{m_\sigma + n_\sigma}{2}$. If $\ell +2k-1 \leq m_\sigma \leq n$ and there exists a cycle of even length $t \geq 2k +4$ in $\dcd^*(\sigma)$, then $\sigma \in \C_\lambda^2$.
    \end{lemma}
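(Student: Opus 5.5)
The plan is to mimic the proof of Lemma \ref{lemma: large odd cycle}, with one change: the long even cycle must be paired with a second even cycle. Since $\sigma\in A_n$, the number of even-length cycles in $\dcd^*(\sigma)$ is even. So besides $\sigma_1=(1,2,\dots,t)$ with $t$ even and $t\geq 2k+4$, there is a second even cycle $\sigma_2=(y_1,\dots,y_{2s})$ with $s\geq 1$. I will treat the product $\sigma_1\sigma_2$ as the basic fragment.

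\textbf{Step 1: peel off the transpositions.} I will use the same $k$ pairs of transpositions as in Lemma \ref{lemma: large odd cycle}: $(2,t)(3,t-1)\cdots(k+1,t-k+1)$ will sit in the first factor and $(2,t-1)(3,t-2)\cdots(k+1,t-k)$ in the second. The point of that construction is that conjugating through these transpositions shortens $\sigma_1$ to an essentially $t-2k$ cycle on the symbols $1,k+2,\dots,t-k$ (up to relabelling). Here $t-2k$ is even and at least $4$.

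\textbf{Step 2: factor the shortened fragment.} I apply Bertram's Subcase I (if $s\geq 2$) or Subcase III (if $s=1$) to the product of this shortened even cycle and $\sigma_2$. Both subcases need one of the cycles to have length at least $4$, which is exactly why the hypothesis is $t\geq 2k+4$. This writes the shortened product as a product of two cycles, each of length $\ell_1=\frac{(t-2k)+2s+2}{2}$.

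\textbf{Step 3: assemble $\rho_1,\rho_2$ and verify.} I splice the transpositions back in to obtain $\rho_1,\rho_2$ with $\rho_1\rho_2=\sigma_1\sigma_2$. Each $\rho_i$ consists of one $\ell_1$-cycle and $k$ disjoint transpositions, with the transpositions disjoint from the long cycle. I will check this explicitly, in the same way the displayed formulas for $\rho_1,\rho_2$ were written in Lemma \ref{lemma: large odd cycle}. I expect this to be the main obstacle: I must choose the labels so that the long cycles from Subcase I/III pass through $1$ and the endpoints $t,k+2$ exactly as in the odd case, and the check for $s=1$ must be done separately.

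\textbf{Step 4: glue with the rest.} I handle $\sigma(\sigma_1\sigma_2)^{-1}$ by Bertram's construction, giving two cycles of length $\ell_2=\frac{m_\sigma+n_\sigma-t-2s-2}{2}$ with a common symbol $a$. I then concatenate them with the long cycles of $\rho_1,\rho_2$ through $a$, exactly as in Lemma \ref{lemma: large odd cycle}. If $\sigma=\sigma_1\sigma_2$, this step is vacuous. The resulting long cycles have length
$$\ell'=\ell_1+\ell_2=\frac{m_\sigma+n_\sigma-2k+2}{2}\leq \frac{m_\sigma+n_\sigma}{2}.$$
Bertram's lengthening process then raises $\ell'$ to any $\ell\geq\frac{m_\sigma+n_\sigma}{2}$ with $\ell\leq n-2k$. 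This settles the case $m_\sigma\geq \ell+2k$, after reducing to $m_\sigma=n$.

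\textbf{Step 5: the boundary case $m_\sigma=\ell+2k-1$.} I repeat the argument of Lemma \ref{lemma: large odd cycle} verbatim. I insert the fixed point $\alpha$ into the Bertram cycles for the remaining part, which costs $1$ in length. This suffices when $k\geq 2$. For $k=1$, I go through $\lambda_0=(\ell-1,2,\dots)$ in $A_{n-1}$ and then insert $\alpha$ into the long cycles. The residual equality case forces $n\leq 5$. There $\sigma$ cannot contain an even cycle of length $2k+4\geq 6$, so it is vacuous.
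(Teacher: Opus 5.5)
\textbf{There is a genuine gap at the core step.} Your overall architecture matches the paper's proof. You pair $\sigma_1$ with a second even cycle, reuse $T_1=(2,t)(3,t-1)\cdots(k+1,t-k+1)$ and $T_2=(2,t-1)\cdots(k+1,t-k)$, glue with Bertram's construction on the remainder, lengthen, and treat the boundary case as in Lemma~\ref{lemma: large odd cycle}. But the step that carries the real content, the factorization of $\sigma_1\sigma_2$ into two elements of type $(\ell_1,2^k)$, is deferred, and the reduction you describe for it is false. Write $\rho_i=\alpha_iT_i$ with $\alpha_i$ a cycle disjoint from $T_i$. Then $\rho_1\rho_2=T_1\alpha_1\alpha_2T_2$, so you are forced to have $\alpha_1\alpha_2=T_1\sigma_1T_2\,\sigma_2$. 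A direct computation (right-to-left, as in the paper) gives
\[
T_1\sigma_1T_2=(1,t)(k+2,k+3,\dots,t-k).
\]
This is a transposition times an odd cycle of length $t-2k-1$, on $t-2k+1$ symbols. It is not a $(t-2k)$-cycle on $\{1,k+2,\dots,t-k\}$. So the residual permutation has three nontrivial cycles, and Subcase I/III does not apply. Moreover, $\alpha_1,\alpha_2$ cannot be disjoint. For two $\ell_1$-cycles with overlapping supports, the standard genus (Riemann--Hurwitz) count forces $\ell_1\ge\frac{m+n}{2}=\frac{t-2k+2s+4}{2}$. Hence your $\ell_1=\frac{t-2k+2s+2}{2}$ is unattainable with these transpositions. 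Your Step 4 arithmetic is also inconsistent with your own $\ell_1$: with it, $\ell_1+\ell_2=\frac{m_\sigma+n_\sigma-2k}{2}$. The value $\frac{m_\sigma+n_\sigma-2k+2}{2}$ that you state is what the correct $\ell_1$ produces.

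\textbf{What the step actually requires.} You must factor $(1,t)(k+2,\dots,t-k)\sigma_2$ in Bertram's manner. Treat $(k+2,\dots,t-k)$ as an odd fragment; the hypothesis $t\ge 2k+4$ is what makes it a genuine cycle of length at least $3$, not the reason you gave via Subcase I/III. Treat $(1,t)\sigma_2$ as an even pair. The gluing must ensure that the cycle attached to $T_1$ avoids $t$ and the cycle attached to $T_2$ avoids $t-k$; they may contain $t-k$ and $t$, respectively. The paper achieves this with the explicit choice
\[
\rho_1=\Bigl(y_{s+2},\dots,y_{2s},y_1,\frac{t+2}{2},\dots,t-k,k+2,1\Bigr)T_1,\qquad \rho_2=\Bigl(1,t,k+2,\dots,\frac t2,y_1,\dots,y_{s+1}\Bigr)T_2 .
\]
Both long cycles have length $\ell_1=\frac{t+2s-2k+4}{2}$, and $\rho_1\rho_2=\sigma_1\sigma_2$ is checked directly. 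With this fragment identity in place, the rest of your plan goes through as you describe.
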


    \begin{proof}
        Without loss of generality, let this cycle be $\sigma_1=(1,2,\dots, t)$. Let $\sigma_2:=(a_1, a_2, \dots, a_{2m})$ be some other cycle of even length in $\dcd^*(\sigma)$. We first write $\sigma_1\sigma_2$ as a product of two permutations $\rho_1$ and $\rho_2$, where $\rho_1 = \left(a_{m+2}, a_{m+3}, \dots, a_{2m},a_1,\frac{t+2}{2}, \frac{t+4}{2}, \cdots, t-k, k+2,1\right)(2,t)\cdots  (k+1, t-k+1)$, $\rho_2 =\left(1,t,k+2,k+3,\cdots,\frac{t}{2},a_1,a_2,\dots, a_{m+1}\right)(2,t-1) \cdots  (k+1, t-k)$. Here, $\rho_1$ and $\rho_2$ have $k$-many $2$-cycles and one cycle of length $\ell_1:=\frac{t+2m-2k+4}{2}$.

        \medskip

        We first assume that $\ell +2k \leq m_\sigma \leq n$. Again, it is enough to assume that $m_\sigma=n$. Now, the permutation $\sigma {\sigma_1}^{-1} {\sigma_2}^{-1}$ has $m_{\sigma\sigma_1^{-1}{\sigma_2}^{-1}}= m_\sigma-t-2m$ and $n_{\sigma\sigma_1^{-1}{\sigma_2}^{-1}}= n_\sigma -2$. So, $\sigma\sigma_1^{-1}{\sigma_2}^{-1}$ can be written as a product of two cycles of length $\ell_2:=\frac{m_\sigma+n_\sigma-t-2m-2}{2}$, using the Bertram's construction. We then proceed similarly to Lemma \ref{lemma: large odd cycle} and show that $\sigma \in \C_{\lambda_1}^2$, whenever $\lambda_1 = (\ell', 2^k, 1^{n-\ell'-2k})$, $\ell'\geq \ell_1 +\ell_2 = \frac{m_\sigma+n_\sigma-2k+2}{2}$. Since $k\geq 1$, $\sigma \in \C_{\lambda}^2$, whenever $\lambda= (\ell, 2^k, 1^{n-\ell-2k})$, $\ell\geq \frac{m_\sigma+n_\sigma}{2}$. For $m_\sigma = \ell +2k-1$, we again proceed similarly to Lemma \ref{lemma: large odd cycle}, replacing $\sigma\sigma_1^{-1}$ by $\sigma\sigma_1^{-1}{\sigma_2}^{-1}$.
    \end{proof}

    \begin{lemma}\label{lemma: all cycles are small}
        Let $n\geq 5$, $k\geq1$ and $\lambda = (\ell, 2^k, 1^{n-\ell-2k})$. Let $\sigma \in A_n$ be such that $\ell \geq \frac{m_\sigma + n_\sigma}{2}$. If $\ell +2k-1 \leq m_\sigma \leq n$ and all cycles in $\dcd^*(\sigma)$ have length at most $2k +2$, then $\sigma \in \C_\lambda^2$.
    \end{lemma}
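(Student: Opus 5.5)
The plan is to build the $k$ transpositions of each factor out of several cycles of $\sigma$ at once, rather than out of a single long cycle as in Lemmas \ref{lemma: large odd cycle} and \ref{lemma: large even cycle}. Since every cycle in $\dcd^*(\sigma)$ now has length at most $2k+2$, no single cycle can carry all $k$ transpositions. We first reduce, exactly as before, to $m_\sigma=n$ when $\ell+2k\le m_\sigma$, and treat $m_\sigma=\ell+2k-1$ at the end by inserting the fixed point $\alpha$ into the long cycles.

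The first step is a local fragment construction generalizing the one in Lemma \ref{lemma: large odd cycle}. For an odd cycle $(1,2,\dots,t)$ with $t=2j+1$ and any $0\le j'\le j-1$, the same formula with $k$ replaced by $j'$ writes it as $\rho_1\rho_2$. Here each $\rho_i$ has $j'$ transpositions and one cycle of length $\frac{t-2j'+3}{2}$, and that cycle contains the symbol $1$, which is used for gluing. For a pair of even cycles, the formula of Lemma \ref{lemma: large even cycle} likewise allows any number $j'$ of transpositions up to roughly $\frac{t-4}{2}$ from the first cycle. I would also record fragments in which the transpositions are taken from both even cycles. In each fragment, extracting $j'$ transpositions lowers the long-cycle length by exactly $j'$ below the Bertram value $\frac{m+n}{2}$ for that fragment. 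The second step glues fragments using Bertram's Step 2 concatenation: the long cycles are concatenated as $(\pi_1,\pi_k,\dots,\pi_2)$ and $(\tau_1,\dots,\tau_k)$. The transpositions are disjoint from all other fragments, so they commute and survive unchanged. If the total number $K$ of transpositions extracted equals $k$, the long cycle has length $\frac{m_\sigma+n_\sigma}{2}-k+1$, which is at most $\ell$. We then lengthen it to $\ell$ by the lengthening process; this is legitimate because the long cycles still share a symbol and there are free or non-shared symbols available.

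The main obstacle is the distribution step: choosing $j'_i$ for each fragment with $\sum j'_i=k$, given that each fragment has a bounded capacity. Transpositions come essentially from symbols $2,\dots$ of each cycle, so a cycle of length $t$ supplies about $\lfloor (t-3)/2\rfloor$ transpositions, and $3$-cycles and pairs of transpositions supply none. I would try to show that the total capacity is at least $k$, using $m_\sigma\ge \ell+2k-1$ together with $\ell\ge\frac{m_\sigma+n_\sigma}{2}$. If capacity is short, because $\sigma$ consists mostly of $3$-cycles and $2$-cycles, I would add special fragments. These would write, for instance, a product of two $3$-cycles, or $(a,b)(c,d)$ combined with a neighbouring cycle, as a product of two permutations each having one transposition plus a longer cycle. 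I would also allow the transpositions of $\rho_1$ and $\rho_2$ to straddle two fragments, as in the even-pair construction. Failing that, the remaining slack $\ell-\frac{m_\sigma+n_\sigma}{2}$ lets the lengthening absorb a transposition. Removing a transposition pair and lengthening the long cycle by $2$ is available when $\ell+2k\le n$. A final small-$n$ check, as in Lemma \ref{lemma: large odd cycle} where everything collapses to $A_5$, handles the residual edge case $k=1$, $n=\ell+2$.
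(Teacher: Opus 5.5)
Your architecture matches the paper's: take the $k$ transpositions of each factor from several cycles of $\sigma$, glue the long cycles by Bertram's concatenation, then lengthen. However, the proposal stops exactly where the content of the lemma begins. The distribution step is named as the main obstacle and then left as a list of intentions.

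With the gadgets you actually specify, the inequality ``capacity $\ge k$'' that you hope to prove is false under the hypotheses. Those gadgets are: an odd $t$-cycle gives at most $\frac{t-3}{2}$ transpositions, an even pair gives at most $\frac{t-4}{2}$ from its longer cycle, and $2$-, $3$- and $4$-cycles give nothing.
\begin{itemize}
\item Take $\sigma=(1,2,3,4)(5,6,7,8)\in A_9$ and $\lambda=(5,2,2)$. All hypotheses hold: $\frac{m_\sigma+n_\sigma}{2}=5=\ell$, $m_\sigma=8=\ell+2k-1$, and the cycle lengths satisfy $4\le 2k+2$. Yet your capacity is $0$.
\item Take $\sigma=(1,2,3)(4,5,6,7)(8,\dots,17)\in A_{18}$ and $\lambda=(10,2^4)$. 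Your capacity is $3<k$, the slack $\ell-\frac{m_\sigma+n_\sigma}{2}$ is $0$, and $\sigma$ has a single fixed point.
\end{itemize}
The ``special fragments'' and ``straddling'' transpositions you invoke are exactly what has to be built, and none of them is built.

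The paper supplies the missing devices concretely:
\begin{itemize}
\item Two $2$-cycles are a product of two double transpositions, $(a_1,a_2)(a_3,a_4)=(a_2,a_4)(a_1,a_3)\cdot(a_1,a_4)(a_2,a_3)$. So each pair of $2$-cycles yields two transpositions (not zero) while lowering the Bertram bound by $3$.
\item Two disjoint $s$-cycles are a product of two fixed-point-free involutions, which absorbs $s$ transpositions when $k\ge s$.
\item There are explicit gadgets for two $3$-cycles ($k\le 2$) and for two $4$-cycles ($k\le 3$).
\item Odd cycles of length $\ge 5$ are split into type $(3,2^{(t-3)/2})$. Even cycles of length $\ge 6$ are split into types $(3,2^{(t-4)/2})$ and $(4,2^{(t-4)/2})$ in both orders, so that two even cycles can be balanced. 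The argument then recurses into Lemmas \ref{lemma: large odd cycle} and \ref{lemma: large even cycle}.
\item The configurations that survive all reductions, cases (a)--(d) with $\lambda$ up to $(10,2^4)$, are settled by explicit factorizations. In your second example the paper's transpositions $(2,17),(3,16),(5,14),(6,13)$ straddle all three cycles.
\end{itemize}
These residual cases are not the lone $A_5$ edge case of Lemma \ref{lemma: large odd cycle}, so your closing ``small-$n$ check'' misjudges what remains.

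Your fallback does not substitute for any of this. Appending a transposition to both factors as $\rho(a,b)\cdot\tau(a,b)$ needs $a,b\notin\supp(\rho)\cup\supp(\tau)$, that is, two fixed points of $\sigma$. After your own reduction to $m_\sigma\in\{n,n-1\}$, there are none. You also give no operation that converts extra long-cycle length into an additional transposition in both factors simultaneously. In any case, in both examples above the slack $\ell-\frac{m_\sigma+n_\sigma}{2}$ you rely on is $0$.
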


    \begin{proof}
        Let $\sigma_1\sigma_2\dots\sigma_{n_\sigma}$ be the disjoint cycle decomposition of $\sigma$. Recall that $n_i(\sigma)$ denote the number of cycles of length $i$ in $\dcd^*(\sigma)$. If $\sigma$ is understood, we simply write $n_i$ in place of $n_i(\sigma)$. We first reduce our proof to the case $n_2 \leq 1$, and for $\ell_i >2$ either $n_{\ell_i} \leq 1$ or $0<k \leq \ell_i-1$. Let $(a_1,a_2)(a_3,a_4)$ be a pair of $2$-cycles in $\dcd^*(\sigma)$. Then it can be written as $(a_1,a_2)(a_3,a_4) = (a_2,a_4)(a_1,a_3). (a_1,a_4)(a_2,a_3) \in \C_{(2,2)}^2$. Similarly, for any even $r$, if $\sigma_1,\sigma_2, \dots, \sigma_r$ are $2$-cycles in $\dcd^*(\sigma)$, then we can write $\sigma_1\sigma_2 \dots\sigma_r$ as a product of $r$-many $2$-cycles. Consequently, $\sigma \in \C_\lambda^2$ if $\sigma':=\sigma \sigma_1^{-1}\sigma_2^{-1} \dots \sigma_r^{-1} \in \C_{\lambda_1}^2$, where $\lambda_1 = (\ell, 2^{k-r}, 1^{n-\ell-2(k-r)})$. We can choose $r$ such that either $n_2(\sigma') \leq1$, or the number of unused $2$-cycles in $\lambda$, that is $k-r$ is $\leq 1$. Note that if $\ell \geq \frac{m_\sigma + n_\sigma}{2}$, then $\ell \geq \frac{m_{\sigma'} + n_{\sigma'}}{2} = \frac{m_\sigma + n_\sigma -3r}{2}$. Thus, for our purpose, it is enough to assume that either $n_2 \leq 1$, or $k \leq 1$. 

        \medskip

        We first proceed with the assumption that $k \leq 1$. If $k=0$, then $\C_{\lambda}=\C_{\ell}$, the conjugacy class of $\ell$-cycles, and hence $\sigma \in \C_\lambda^2$, by Theorem \ref{Bertram_intermediate}. For $k=1$, we first let $n_2 \geq 2$. Let $\sigma_1:= (1,2)$ and $\sigma_2:=(3,4)$ be a pair of $2$-cycles in $\dcd^*(\sigma)$. Then $\sigma_1 \sigma_2 = (2,4)(1,3). (4,1)(2,3)$. For the permutation, $\sigma\sigma_1^{-1}\sigma_2^{-1}$, $\frac{m_{\sigma\sigma_1^{-1}\sigma_2^{-1}} + n_{\sigma\sigma_1^{-1}\sigma_2^{-1}}}{2} = \frac{m_\sigma + n_\sigma -6}{2}$. Then by Bertram's construction, we write $\sigma\sigma_1^{-1}\sigma_2^{-1}$ as $\sigma\sigma_1^{-1}\sigma_2^{-1} = \rho_1\rho_2 \in \C_{\ell'}^2$, where $\ell' = \frac{m_\sigma + n_\sigma -6}{2}$ or $\ell' = \frac{m_\sigma + n_\sigma -6}{2} + 1$, depending on whether $m_\sigma \geq \ell + 2k$ or $m_\sigma = \ell + 2k-1$. Let $\rho_1 = (a_1, a_2, \dots, a_{\ell'-1}, a)$ and $\rho_2 = (a, b_2, b_3, \dots, b_{\ell'})$. Then $\sigma = (a_1, a_2, \dots, a_{\ell'-1}, a, 2,4) (1,3). (4,1,a, b_2, b_3, \dots, b_{\ell'})(2,3) \in \C_{\lambda'}^2$, where $\lambda' = (\ell' +2, 2, 1^{n-\ell'-4})$. Since $\ell' +2 \leq \frac{m_\sigma + n_\sigma }{2}$, $\sigma \in \C_{\lambda}^2$. Thus, we can assume $n_2 \leq 1$. 

        \medskip

        Now, let $\ell_i>2$ be such that $k \geq \ell_i$. If $n_{\ell_i} \geq 2$, then let $\sigma_1:= (1,2, \dots, \ell_i)$ and $\sigma_2:(\ell_i+1, \ell_i+2, 2\ell_i)$ be two $\ell_i$-cycles in the $\dcd^*(\sigma)$. We write $\sigma_1\sigma_2 = \rho \tau \in \C_{(2^{\ell_i})}^2$, where $\rho= (2,2\ell_i) (3,2\ell_i-1) \cdots (\ell_i, \ell_i+2)(1, \ell_i+1)$ and $\tau=(1,2\ell_i) (2,2\ell_i-1) \cdots (\ell_i -1, \ell_i+2)(\ell_i, \ell_i+1)$. For the permutation $\sigma\sigma_1^{-1}\sigma_2^{-1}$, we have $\frac{m_{\sigma\sigma_1^{-1}\sigma_2^{-1}} + n_{\sigma\sigma_1^{-1}\sigma_2^{-1}}}{2} = \frac{m_\sigma + n_\sigma -2(\ell_i+1)}{2}$. Then by Bertram's construction, we write $\sigma\sigma_1^{-1}\sigma_2^{-1}$ as $\sigma\sigma_1^{-1}\sigma_2^{-1} = \rho_1\rho_2 \in \C_{\ell'}^2$, where $\ell' = \frac{m_\sigma + n_\sigma -2(\ell_i+1)}{2}$ or $\ell' = \frac{m_\sigma + n_\sigma -2(\ell_i+1)}{2} + 1$, depending on whether $m_\sigma \geq \ell + 2k$ or $m_\sigma = \ell + 2k-1$. Now, by a suitable concatenation of symbols, $\sigma \in \C_\lambda^2$. Hence, we can assume that $n_2 \leq 1$, and for $\ell_i >2$ either $n_{\ell_i} \leq 1$ or $k \leq \ell_i-1$. For $k=0$, we already have Bertram's result. Thus, we assume that $n_2 \leq 1$, and for $\ell_i >2$ either $n_{\ell_i} \leq 1$ or $0<k \leq \ell_i-1$. 

        \medskip

        We now focus on $\ell_i =3$ and $\ell_i =4$. For $\ell_i =3$, we have either $n_{3} \leq 1$ or $0<k \leq 2$. If $0<k \leq 2$ and $n_{3} \geq 2$, then let $\sigma_1 := (1,2,3)$ and $\sigma_2:= (4,5,6)$ be a pair of $3$-cycles in $\dcd^*(\sigma)$. We first use the Bertram's construction and write $\sigma\sigma_1^{-1}\sigma_2^{-1}$ as $\sigma\sigma_1^{-1}\sigma_2^{-1} = \rho_1\rho_2 \in \C_{\ell'}^2$, where $\ell' = \frac{m_\sigma + n_\sigma -8}{2}$ or $\ell' = \frac{m_\sigma + n_\sigma -8}{2} + 1$, depending on whether $m_\sigma \geq \ell + 2k$ or $m_\sigma = \ell + 2k-1$. Let $\rho_1 = (a_1, a_2, \dots, a_{\ell'-1}, a)$ and $\rho_2 = (a, b_2, b_3, \dots, b_{\ell'})$. Then for $k=1$, we write $\sigma = \rho\tau$, where $\rho=(a_1, a_2, \dots, a_{\ell'-1}, a, 5,1) (2,4) (3,6)$ and $\tau = (1,4,a, b_2, \dots, b_{\ell'}) (2,6)(3,5)$. For $k=2$, we write $\sigma = \rho\tau$, where $\rho=(a_1, \dots, a_{\ell'-1}, a, 3, 1, 4) (2,6)$ and $\tau =(4,5,2,a, b_2, b_3, \dots, b_{\ell'}) (1,6)$. Thus, $\sigma \in \C_{\lambda}^2$ for $0<k \leq 2$ and $n_{3} \geq 2$. Hence, we can assume $n_{3} \leq 1$. 

        \medskip

        Now, for $\ell_i =4$, we have either $n_{4} \leq 1$ or $0<k \leq 3$. If $0<k \leq 3$ and $n_{4} \geq 2$, then let $\sigma_1 := (1,2,3,4)$ and $\sigma_2:= (5,6,7,8)$ be a pair of $4$-cycles in $\dcd^*(\sigma)$. We first use Bertram's construction and write $\sigma\sigma_1^{-1}\sigma_2^{-1}$ as $\sigma\sigma_1^{-1}\sigma_2^{-1} = \rho_1\rho_2 \in \C_{\ell'}^2$, where $\ell' = \frac{m_\sigma + n_\sigma -10}{2}$ or $\ell' = \frac{m_\sigma + n_\sigma -10}{2} + 1$, depending on whether $m_\sigma \geq \ell + 2k$ or $m_\sigma = \ell + 2k-1$. Let $\rho_1 = (a_1, a_2, \dots, a_{\ell'-1}, a)$ and $\rho_2 = (a, b_2, b_3, \dots, b_{\ell'})$. Then we write $\sigma = \rho \tau$, where for $k=1$, $\rho =(a_1, a_2, \dots, a_{\ell'-1}, a, 6,1) (2,5) (3,8) (4,7)$ and $\tau = (1,5,a, b_2, b_3, \dots, b_{\ell'}) (2,8)(3,7) (4,6)$; for $k=2$, $\rho=(a_1, a_2, \dots, a_{\ell'-1}, a, 2, 5, 6) (3,8) (4,7)$ and $\tau = (6,4,1,a, b_2, b_3, \dots, b_{\ell'}) (2,8)(3,7)$; for $k=3$, $\rho = (a_1, a_2, \dots, a_{\ell'-1}, a, 2, 5, 6,7) (3,8)$ and $\tau = (7,3,4,1, a, b_2, b_3, \dots, b_{\ell'}) (2,8)$. Thus, $\sigma \in \C_{\lambda}^2$ for $0<k \leq 3$ and $n_{4} \geq 2$. Hence, we can assume $n_{4} \leq 1$.
        

        \medskip

        Now, for some $i$, let $\sigma_i$ be such that $\ell_i$ is odd and $\ell_i \geq 5$. We let $\sigma_i:= (1,2,\dots,\ell_i)$. Since all cycles in $\dcd^*(\sigma)$ have length $\leq 2k+2$, $\ell_i < 2k+3$ and hence $\frac{\ell_i -3}{2} < k$. We write $\sigma_i = \rho \tau$, where $\rho =\left(\frac{\ell_i+3}{2}, \frac{\ell_i+1}{2}, 1\right) (2,\ell_i) (3,\ell_i-1) \cdots \left(\frac{\ell_i-1}{2}, \frac{\ell_i+5}{2}\right)$ and $\tau=\left(1, \ell_i, \frac{\ell_i+1}{2}\right) (2, \ell_i-1)(3,\ell_i-2) \cdots \left(\frac{\ell_i-1}{2}, \frac{\ell_i+3}{2}\right)$. Thus, $\sigma_i \in \C_{\lambda_i}^2$, where $\lambda_i= (3, 2^{\frac{\ell_i -3}{2}}, 1^{n-\ell_i})$. Since $\ell_i \geq 5$, $\frac{m_{\sigma_i} + n_{\sigma_i}}{2} \geq \frac{5+1}{2} =3$. Thus, $\ell \geq \frac{m_\sigma +n_\sigma}{2} \implies \ell-3 \geq \frac{(m_\sigma-m_{\sigma_i}) +(n_\sigma-n_{\sigma_i})}{2}$. Since $\frac{\ell_i -3}{2} < k$, it is enough to show that  $\sigma\sigma_i^{-1} \in \C_{\lambda'}^2$, where $\lambda' = (\ell -3, 2^{k-\frac{\ell_i -3}{2}}, 1^{n-\ell +3 -2\left(k-\frac{\ell_i -3}{2}\right)})$. Indeed, if that is the case, then by a suitable concatenation of symbols, we get $\sigma \in \C_{\lambda}^2$. In order to show that $\sigma\sigma_i^{-1} \in \C_{\lambda'}^2$, first assume that there exists an cycle of odd length $\geq 2\left(k-\frac{\ell_i -3}{2}\right)+3$. In this case, we apply Lemma \ref{lemma: large odd cycle} to conclude that  $\sigma\sigma_i^{-1} \in \C_{\lambda'}^2$ and consequently $\sigma \in \C_{\lambda}^2$. If there does not exist a cycle of odd length $\geq 2\left(k-\frac{\ell_i -3}{2}\right)+3$, then we again pick a cycle of odd length $\geq 5$ (if it exists), and repeat the above process. An iterative repetition of this process allows us to assume that $n_i =0$, if $i$ is odd and $i\geq 5$.

        \medskip

        By assumption, all cycles in $\dcd^*(\sigma)$ have length $\leq 2k+2$. We now proceed to reduce to the case when there is at most one cycle of even length $\geq 6$ in $\dcd^*(\sigma)$. If that doesn't already hold for $\sigma$, we proceed as follows:

        \medskip

        \noindent \textbf{Step 1:} For some $j$, let $\sigma_j:= (1,2,\dots,\ell_j)$ be such that $\ell_j$ is even and $\ell_j\geq 6$. Since $\ell_j < 2k+4$, $\frac{\ell_j -4}{2} < k$. Thus, for any $k' \leq \frac{\ell_j -4}{2}$, we can write $\sigma_j$ as  $\sigma_j = \rho_1\tau_1 = \tau_2\rho_2$, where $\rho_1 = \left(\frac{\ell_j+2}{2}, \frac{\ell_j+4}{2}, \cdots, \ell_j-k', k'+2,1\right)(2,\ell_j)(3,\ell_j-1)\cdots  (k'+1, \ell_j-k'+1)$, 

        \noindent $\tau_1 = \left(1,\ell_j,k'+2,k'+3,\cdots,\frac{\ell_j}{2}\right)(2, \ell_j-1)(3,\ell_j-2) \cdots  (k'+1, \ell_j-k')$, 

        \noindent $\tau_2 = \left(\frac{\ell_j+4}{2}, \frac{\ell_j+6}{2}, \cdots, \ell_j-k', k'+2,1\right)(2,\ell_j)(3,\ell_j-1)\cdots  (k'+1, \ell_j-k'+1)$ and 

        \noindent $\rho_2 = \left(1,\ell_j,k'+2,k'+3,\cdots,\frac{\ell_j}{2}, \frac{\ell_j+2}{2}\right)(2, \ell_j-1)(3,\ell_j-2) \cdots  (k'+1, \ell_j-k')$.

        \noindent Note that $\rho_1,\rho_2\in \C_{\lambda_j}$ and $\tau_1, \tau_2 \in  \C_{\lambda_j'}$, for $\lambda_j= (\frac{\ell_j-2k'+2}{2}, 2^{k'}, 1^{n-\frac{\ell_j-2k'+2}{2} - 2k'})$ and $\lambda_j'= (\frac{\ell_j-2k'+4}{2}, 2^{k'}, 1^{n-\frac{\ell_j-2k'+4}{2} - 2k'})$. Since $\sigma$ contains no cycle of even length $\geq 2k+4$, $\frac{\ell_j-4}{2}<k$. So, we take $k' = \frac{\ell_j-4}{2}<k$, and hence we write $\sigma_j = \rho_1\tau_1 = \tau_2\rho_2$, where $\rho_1,\rho_2\in \C_{\lambda_j}$ and $\tau_1, \tau_2 \in  \C_{\lambda_j'}$, for $\lambda_j= (3, 2^{\frac{\ell_j-4}{2}}, 1^{n-3- 2\left(\frac{\ell_j-4}{2}\right)})$ and $\lambda_j'= (4, 2^{\frac{\ell_j-4}{2}}, 1^{n-4 - 2\left(\frac{\ell_j-4}{2}\right)})$.

        \medskip

        \noindent \textbf{Step 2:} If $\sigma\sigma_j^{-1}$ contains a cycle $\sigma_{m}$ of even length $\ell_{m}\geq 2\left(k-\frac{\ell_j -4}{2}\right)+4$, then for $\sigma_m$, we take $k' = k-\frac{\ell_j -4}{2}$, and proceed similar to Step $1$ and write $\sigma_{m}$ as $\sigma_{m} = \rho_1'\tau_1' = \tau_2'\rho_2'$, where $\rho_1',\rho_2'\in \C_{\lambda_{m}}$ and $\tau_1', \tau_2' \in  \C_{\lambda_{m}'}$, for $\lambda_{m}= (\frac{\ell_m-2\left(k-\frac{\ell_j -4}{2}\right)+2}{2}, 2^{k-\frac{\ell_j -4}{2}}, 1^{n-\frac{\ell_m-2\left(k-\frac{\ell_j -4}{2}\right)+2}{2}-2\left(k-\frac{\ell_j -4}{2}\right)})$, and $\lambda_{m}'= (\frac{\ell_m-2\left(k-\frac{\ell_j -4}{2}\right)+4}{2}, 2^{k-\frac{\ell_j -4}{2}}, 1^{n-\frac{\ell_m-2\left(k-\frac{\ell_j -4}{2}\right)+4}{2}-2\left(k-\frac{\ell_j -4}{2}\right)})$. By a suitable concatenation of symbols, we get $\sigma_{j} \sigma_{m} \in \C_{\lambda'}^2$, where $\lambda'= (\ell', 2^{k}, 1^{n-\ell'-2k})$, $\ell'= \frac{4+\ell_m-2\left(k-\frac{\ell_j -4}{2}\right)+2}{2} = \frac{\ell_j + \ell_m -2k +2}{2}$. Now, for $\sigma\sigma_1^{-1} \sigma_2^{-1}$, $\frac{m_{\sigma\sigma_1^{-1} \sigma_2^{-1}} + n_{\sigma\sigma_1^{-1} \sigma_2^{-1}}}{2} = \frac{m_\sigma +n_\sigma -\ell_j - \ell_m -2}{2}$. Then by Bertram's construction, we write $\sigma\sigma_1^{-1} \sigma_2^{-1}$ as a product of two cycles of length $\ell''$, where $\ell''= \frac{m_\sigma +n_\sigma -\ell_j - \ell_m -2}{2}$ or $\ell''= \frac{m_\sigma +n_\sigma -\ell_j - \ell_m}{2}$, depending on whether $m_\sigma \geq \ell + 2k$ or $m_\sigma = \ell + 2k -1$. Thus, by a proper concatenation of symbols, $\sigma \in \C_\mu^2$, where $\mu = (\ell'+\ell'', 2^k, 1^{n-\ell'-\ell'' -2k})$. Since $\ell'+\ell'' \leq \frac{\ell_j + \ell_m -2k +2}{2} + \frac{m_\sigma +n_\sigma -\ell_j - \ell_m}{2} = \frac{m_\sigma +n_\sigma -2k+2}{2} \leq \frac{m_\sigma +n_\sigma}{2}$, $\sigma \in \C_{\lambda}^2$. 

        \medskip
        
        \noindent \textbf{Step 3:} If $\sigma\sigma_j^{-1}$ contains no cycle of even length $\geq 2\left(k-\frac{\ell_j -4}{2}\right)+4$, then we again choose some cycle of even length $\sigma_m$ (if it exists) with $\ell_m \geq 6$, take $k' = \frac{\ell_m -4}{2}$, and write $\sigma_m$ as $\sigma_{m} = \rho_1'\tau_1' = \tau_2'\rho_2'$, where $\rho_1',\rho_2'\in \C_{\lambda_{m}}$ and $\tau_1', \tau_2' \in  \C_{\lambda_{m}'}$, for $\lambda_{m}= (3, 2^{\frac{\ell_m -4}{2}}, 1^{n-3-2\left(\frac{\ell_m -4}{2}\right)})$ and $\lambda_{m}'= (4, 2^{\frac{\ell_m -4}{2}}, 1^{n-4-2\left(\frac{\ell_m -4}{2}\right)})$. Consequently, $\sigma_{j} \sigma_{m} \in \C_{\lambda'}^2$, where $\lambda'= (7, 2^{\frac{\ell_j + \ell_m -8}{2}}, 1^{n-7-2\left(\frac{\ell_j + \ell_m -8}{2}\right)})$. As $\ell -7 \geq \frac{m_\sigma + n_\sigma -14} {2}\geq \frac{m_\sigma -\ell_j-\ell_m + n_\sigma -2} {2} =\frac{m_{\sigma\sigma_j^{-1} \sigma_m^{-1}} + n_{\sigma\sigma_j^{-1} \sigma_m^{-1}}}{2}$ and $\sigma\sigma_j^{-1}\sigma_m^{-1}$ is an even permutation, it is enough to show that $\sigma\sigma_j^{-1} \sigma_m^{-1} \in \C_{\lambda'}^2$, where $\lambda'= (\ell-7, 2^{k-\frac{\ell_j + \ell_m -8}{2}}, 1^{n-\ell+7 - 2\left(k-\frac{\ell_j + \ell_m -8}{2}\right)})$. In order to show that we first assume $\sigma\sigma_j^{-1}\sigma_m^{-1}$ contains a cycle of even length $\geq 2\left( k-\frac{\ell_j + \ell_m -8}{2}\right) + 4 $,  whence by Lemma \ref{lemma: large even cycle}, we obtain $\sigma\sigma_j^{-1} \sigma_m^{-1} \in \C_{\lambda'}^2$ and consequently $\sigma \in \C_{\lambda}^2$. If $\sigma\sigma_j^{-1}\sigma_m^{-1}$ contains no cycle of even length $\geq 2\left( k-\frac{\ell_j + \ell_m -8}{2}\right) + 4$, then we replace $\sigma$ by $\sigma\sigma_j^{-1}\sigma_m^{-1}$ and repeat the same process for $\sigma\sigma_j^{-1}\sigma_m^{-1}$ from Step $1$.

        \medskip
        
        An iterative repetition of the above process allows us to assume that there is at most one cycle of even length $\ell_i \geq 6$ in $\dcd^*(\sigma)$. In addition to this, we already have $n_2,n_3,n_4 \leq 1$, and $n_{\ell_i} =0$ if $\ell_i$ is odd and $\geq 5$. Since $\sigma \in A_n$ and $\ell +2k -1 \leq m_\sigma$, we are left with the following possibilities of $\sigma$. 

        \medskip
        
        \noindent \textbf{(a)} $\sigma = \sigma_1\sigma_2$, where $\sigma_1$ is a $2$-cycle and $\sigma_2$ is a $4$-cycle. By the assumption $\ell \geq \frac{m_\sigma + n_\sigma}{2}$, we have $\ell \geq 4$. We also have $\ell + 2k -1\leq m_\sigma = 6 \implies \ell +2k \leq 7$. Thus, the possible choices for $\lambda$ are $(4,2,1^{n-6})$ and $(5,2,1^{n-7})$. For these choices, let $\sigma = (1,2)(3,4,5,6)$. Then we write $\sigma$ as $\sigma = \rho \tau \in \C_\lambda^2$ by taking $\rho = (4,6,5,1)(2,3)$, $\tau = (2,5,4,6)(1,3)$ if $\lambda=(4,2,1^{n-6})$; and $\rho =(4,7,6,5,1)(2,3)$, $\tau = (2,5,7,4,6)(1,3)$ if $\lambda = (5,2,1^{n-7})$.

        \medskip
        
        \noindent \textbf{(b)} $\sigma = \sigma_1\sigma_2\sigma_3$, where $\sigma_1$ is a $2$-cycle, $\sigma_2$ is a $3$-cycle and $\sigma_3$ is a $4$-cycle. By the assumption $\ell \geq \frac{m_\sigma + n_\sigma}{2}$, we have $\ell \geq 6$. We also have $\ell + 2k -1\leq m_\sigma = 9 \implies \ell +2k \leq 10$. Thus, the possible choices for $\lambda$ are $(6,2, 1^{n-8}),~(6,2^2, 1^{n-10}),~(7,2, 1^{n-9})$ and $(8,2, 1^{n-10})$. Let $\sigma = (1,2)(3,4,5)(6,7,8,9)$. Then we write $\sigma$ as $\sigma = \rho \tau \in \C_\lambda^2$ by taking $\rho= (7,8,5,9,3,1)(2,6)$, $\tau = (2,3,4,8,5,9)(1,6)$ if $\lambda = (6,2, 1^{n-8})$; $\rho= (7,3,9,5,10,1)(2,6)(4,8)$, $\tau = (2,10,5,7,4,9)(1,6)(3,8)$ if $\lambda = (6,2^2, 1^{n-10})$; $\rho= (7,4,8,5,9,3,1)(2,6)$, $\tau = (2,3,7,4,8,5,9)(1,6)$ if $\lambda = (7,2, 1^{n-9})$; and $\rho= (7,4,8,5,9,3,10,1)(2,6)$, $\tau = (2,10,3,7,4,8,5,9)(1,6)$ if $\lambda = (8,2, 1^{n-10})$.

        \medskip
        
        \noindent \textbf{(c)} $\sigma = \sigma_1\sigma_2$ such that $\sigma_1$ is a cycle of length $\ell_1$ where $\ell_1=2$ or $4$, and $\sigma_2$ is a cycle of length $\ell_2$ where $\ell_2$ is even and $\geq 6$. By the assumption $\ell \geq \frac{m_\sigma + n_\sigma}{2}$, we have $\ell \geq \frac{\ell_1 +\ell_2+2}{2}$. In addition, $\ell + 2k -1\leq m_\sigma = \ell_1 +\ell_2 \implies \ell +2k \leq \ell_1 +\ell_2 +1$. We also have the assumption that all cycles in $\sigma$ have length $\leq 2k+2$. Thus, $\ell + 2k \leq \ell_1 + 2k+2 +1 \implies \ell \leq \ell_1 +3$. Therefore, $\frac{\ell_1 +\ell_2 +2}{2} \leq \ell \leq \ell_1 +3$. For $\ell_1 =2$, this leads to $\frac{4 +\ell_2}{2} \leq \ell \leq 5$. Thus, for $\ell_1 =2$ and $\ell_2 \geq 6$, the only possible choice for the pair $(\ell_2, \ell)$ is $(6,5)$. Since $\ell_2 < 2k+4$, $k > \frac{\ell_2-4}{2}$. Further, by using $\ell +2k-1 \leq m_\sigma$, we get $k \leq \frac{m_\sigma+1 - \ell}{2}$. Thus, $\frac{\ell_2-4}{2} < k \leq \frac{m_\sigma+1 - \ell}{2}$. Consequently, for $\ell_1 =2$, $\ell_2 =6$ and $\ell=5$, we get $1 < k \leq 2\implies k=2$. Thus, the only possible choice for $\lambda$ is $(5,2^2,1^{n-9})$. For this choice, we let $\sigma:= (1,2)(3,4,5,6,7,8)$. Choose $\rho = (4,8,6,9,1)(2,3)(5,7)$ and $\tau = (2,9,6,5,8)(1,3)(4,7)$. Then $\sigma = \rho \tau \in \C_\lambda^2$, where $\lambda=(5,2^2,1^{n-9})$.

        \medskip

        Now, for $\ell_1 =4$, the relation $\frac{\ell_1 +\ell_2+2}{2} \leq \ell \leq \ell_1 +3$ becomes $\frac{6 +\ell_2}{2} \leq \ell \leq 7$. Hence, for $\ell_2 \geq 6$ and $\ell_2$ being even, the possible choices for the pair $(\ell_2, \ell)$ are $(6,6)$, $(6,7)$ and $(8,7)$. By the relation $\frac{\ell_2-4}{2} < k \leq \frac{m_\sigma+1 - \ell}{2}$, we get $k=2$ if $(\ell_2, \ell) = (6,6), (6,7)$; and $k=3$ if $(\ell_2,\ell) = (8,7)$. Consequently, for $\ell_1 =4$, the possible choices for the triplet $(\ell_2, \ell, k)$ are $(6,6,2)$, $(6,7,2)$ and $(8,7,3)$. The corresponding choices for $\lambda$ are $(6,2^2,1^{n-10})$, $(7,2^2,1^{n-11})$, and $(7,2^3,1^{n-13})$. For $(\ell_2, \ell, k)=(6,6,2)$ and $(6,7,2)$, we let $\sigma = (1,2,3,4) (5,6,\dots,10)$, and write it as a product $\rho \tau \in \C_\lambda^2$ by choosing $\rho = (5,7,4,8,6,1)(2,10)(3,9)$, $\tau = (3,7,4,6,5,8)(1,10)(2,9)$ if $\lambda = (6,2^2,1^{n-10})$; and $\rho = (5,7,11,4,8,6,1)(2,10)(3,9)$, $\tau = (3,11,7,4,6,5,8)(1,10)(2,9)$ if $\lambda = (7,2^2,1^{n-11})$. For $(\ell_2, \ell, k)= (8,7,3)$, we let $\sigma = (1,2,3,4) (5,6,\dots,12)$, and write it as a product $\rho \tau \in \C_\lambda^2$ by choosing $\rho = (5,8,7,9,6,13,1)(2,12)(3,11)(4,10)$, $\tau = (4,13,6,8,7,5,9)(1,12)(2,11)(3,10)$, where $\lambda = (7,2^3,1^{n-13})$.

        \medskip

        \noindent \textbf{(d)} $\sigma = \sigma_1\sigma_2\sigma_3$ such that $\sigma_1$ is a $3$-cycle, $\sigma_2$ is a cycle of length $\ell_2$ where $\ell_2 = 2$ or $4$, and  $\sigma_3$ is a cycle of length $\ell_3$ where $\ell_3$ is even and $\geq 6$. By $\ell \geq \frac{m_\sigma + n_\sigma}{2}$, we have $\ell \geq \frac{\ell_2 +\ell_3 +6}{2}$. In addition, $\ell + 2k -1\leq m_\sigma = \ell_2 +\ell_3 +3 \implies \ell +2k \leq \ell_2 +\ell_3 +4$. We also have the assumption that all cycles in $\sigma$ have length $\leq 2k+2$. Thus, $\ell + 2k \leq \ell_2 + 2k+2 +4 \implies \ell \leq \ell_2 +6$. Evidently, $\frac{\ell_2 +\ell_3 +6}{2} \leq \ell \leq \ell_2 +6$. For $\ell_2 =2$, this leads to $\frac{8 +\ell_3}{2} \leq \ell \leq 8$. For $\ell_3 \geq 6$ and $\ell_3$ being even, the possible choices for the pair $(\ell_3, \ell)$ when $\ell_2 =2$ are $(6,7)$, $(6,8)$ and $(8,8)$. Since $\ell_3 < 2k+4$, $k > \frac{\ell_3-4}{2}$. Further, by using $\ell +2k-1 \leq m_\sigma$, we get $k \leq \frac{m_\sigma+1 - \ell}{2}$. Thus, $\frac{\ell_3-4}{2} < k \leq \frac{m_\sigma+1 - \ell}{2}$. Consequently, for $\ell_2 =2$, the possible choices for the triplet $(\ell_3, \ell, k)$ are $(6,7,2)$, $(6,8,2)$ and $(8,8,3)$. The corresponding choices for $\lambda$ are $(7,2^2,1^{n-11})$, $(8,2^2,1^{n-12})$, and $(8,2^3,1^{n-14})$. For $(\ell_2, \ell, k)=(6,7,2)$ and $(6,8,2)$, we let $\sigma = (1,2,3) (4,5) (6,7,\dots,11)$, and write it as a product $\rho \tau \in C_\lambda^2$ by choosing $\rho = (6,8,5,9,7,4,1)(2,11)(3,10)$, $\tau = (3,4,8,5,7,6,9)(1,11)(2,10)$ if $\lambda = (7,2^2,1^{n-11})$; and $\rho = (6,8,5,9,7,4,12,1)(2,11)(3,10)$, $\tau = (3,12,4,8,5,7,6,9)(1,11)(2,10)$ if $\lambda = (8,2^2,1^{n-12})$. For $(\ell_2, \ell, k)= (8,8,3)$, we let $\sigma = (1,2,3) (4,5) (6,7,\dots,13)$. Here $\lambda = (8,2^3,1^{n-14})$, so we write $\sigma=\rho \tau \in \C_\lambda^2$ by choosing $\rho = (6,8,4,11,7,9,14,1)(2,13)(3,12)(5,10)$ and $\tau = (3,14,9,5,8,7,6,11)(1,13)(2,12)(4,10)$.

      \medskip Now, for $\ell_2 =4$, the relation $\frac{\ell_2 +\ell_3 +6}{2} \leq \ell \leq \ell_2 +6$ becomes $\frac{10 +\ell_3}{2} \leq \ell \leq 10$. Hence, for $\ell_3 \geq 6$ and $\ell_3$ being even, the possible choices for the pair $(\ell_3, \ell)$ are $(6,8)$, $(6,9)$, $(6,10)$, $(8,9)$, $(8,10)$ and $(10,10)$. By $\frac{\ell_3-4}{2} < k \leq \frac{m_\sigma+1 - \ell}{2}$, we get $k=2,3$ if $(\ell_3,\ell) =(6,8)$; $k=2$ if $(\ell_3,\ell) =(6,9),(6,10)$; $k=3$ if $(\ell_3,\ell) =(8,9), (8,10)$; and $k=4$ if $(\ell_3,\ell) = (10,10)$. Thus, for $\ell_2 =4$, the possible choices for $(\ell_3, \ell, k)$ are $(6,8,2)$, $(6,8,3)$, $(6,9,2)$, $(6,10,2)$, $(8,9,3)$, $(8,10,3)$ and $(10,10,4)$. It is enough to show that $\sigma \in \C_\lambda^2$ for the choices $(6,8,2)$, $(6,8,3)$, $(8,9,3)$ and $(10,10,4)$ because the conclusion follows for other choices by increasing the length $\ell$ accordingly. The corresponding choices for $\lambda$ are $(8,2^2,1^{n-12})$, $(8,2^3,1^{n-14})$, $(9,2^3,1^{n-15})$, and $(10,2^4,1^{n-18})$. For $(\ell_3, \ell, k)=(6,8,2)$ and $(6,8,3)$, we let $\sigma = (1,2,3) (4,5,6,7) (8,9,\dots,13)$, and choose $\rho = (8,4,5,7,11,6,10,1)(2,13)(3,12)$, $\tau = (3,10,7,8,9,6,5,11)(1,13)(2,12)$ if $\lambda = (8,2^2,1^{n-12})$; and $\rho = (8,6,4,11,7,9,14,1)(2,13)(3,12)(5,10)$, $\tau = (3,14,9,5,8,7,6,11)(1,13)(2,12)(4,10)$ if $\lambda = (8,2^3,1^{n-14})$ to show that $\rho \tau \in \C_\lambda^2$. For $(\ell_3, \ell, k)= (8,9,3)$, we have $\lambda=(9,2^3,1^{n-15})$. Thus, we let $\sigma= (1,2,3) (4,5,6,7) (8,9,\dots,15)$, and write $\sigma=\rho \tau \in \C_\lambda^2$ by choosing $\rho = (8,4,13,9,7,10,6,11,1)(2,15)(3,14)(5,12)$ and $\tau = (3,11,5,10,6,9,7,8,13)(1,15)(2,14)(4,12)$. Further, for $(\ell_3, \ell, k)= (10,10,4)$, we have $\lambda=(10,2^4,1^{n-18})$. In this case, we let $\sigma=(1,2,3) (4,5,6,7) (8,9,\dots,17)$, and choose $\rho = (8,10,4,15,9,11,7,12,18,1)(2,17)(3,16)(5,14)(6,13)$ and $\tau = (3,18,12,6,11,7,10,9,8,15)(1,17)(2,16)(4,14)(5,13)$ to show that $\sigma=\rho \tau \in \C_\lambda^2$.  
    \end{proof}

    \begin{proof}[\textbf{Proof of Theorem \ref{prod_classes_alternating}}]
        We divide the proof into three cases based on $m_\sigma$.

        \medskip

        \noindent \textbf{Case 1:} $m_\sigma \leq \ell$. \quad In this case, we first write $\sigma$ as a product of two $\ell$-cycles, say $\tau_1$ and $\tau_2$ in $A_\ell$, using Bertram's construction. Let $a_1,a_2,\dots,a_{2k} \in \{1,2,\dots, n\}$ be such that $a_i \not\in \supp(\tau_1) \cup \supp(\tau_2)$, for all $i$. Then $$\sigma = \tau_1.\tau_2= \tau_1 (a_1,a_2) (a_3,a_4) \dots (a_{2k-1}, a_{2k}).\tau_2(a_1,a_2)(a_3,a_4)\dots (a_{2k-1}, a_{2k}) \in \C_\lambda^2.$$
        \noindent \textbf{Case 2:} $\ell +2k-1 \leq m_\sigma \leq n$. \quad This case follows directly from Lemma \ref{lemma: large odd cycle}, Lemma \ref{lemma: large even cycle}, and Lemma \ref{lemma: all cycles are small}.

        \noindent \textbf{Case 3:} $\ell < m_\sigma \leq \ell +2k-2$. \quad
        It is enough to assume that $\ell +2k = n$. Then $m_\sigma$ has $\ell +2k -m_\sigma$ fixed points. Let $r:= \ell +2k -m_\sigma = 2m +\epsilon$ for some $m$ and $\epsilon = 0,1$. By Case $2$, we write $\sigma = \rho_1\rho_2 \in \C_{\lambda'}^2$ in $A_{n-2m}$, where $\lambda'= (\ell, 2^{k-m})$. Let $\alpha_1,\alpha_2,\dots, \alpha_{2m} \notin \supp(\rho_1) \cup \supp(\rho_2)$. Then $\sigma = \rho_1 (\alpha_1,\alpha_2)\cdots (\alpha_{2m-1}, \alpha_{2m})\rho_2 (\alpha_1,\alpha_2)\cdots (\alpha_{2m-1}, \alpha_{2m}) \in \C_{\lambda}^2$. 

        \noindent In particular if $l\geq \lfloor \frac{3n}{4} \rfloor$, then for any $\sigma\in A_n$, $\frac{m_{\sigma}+n_{\sigma}}{2}\leq l$, whence $\sigma \in \C_{\lambda}^2$. Thus, $\C_{\lambda}^2=A_n$ and our proof is complete.
    \end{proof}

    \section{2-generation of conjugacy classes of derangements in $S_n$ and $A_n$}\label{proof_third_theorem}

    As mentioned in the introduction, before proving Theorem \ref{generation_by_conjugate_derangements}, we give a brief account of some results on the generation of $S_n$ and $A_n$ by conjugate elements. Let $\C_{\lambda}$ be a conjugacy class of $S_n$ indexed by a partition $\lambda\vdash n$. Define $\displaystyle \delta(\C_{\lambda})=\Big \lceil \frac{n-1}{n-l(\lambda)} \Big\rceil$. Note that if $\sigma \in \C_{\lambda}$, then $\displaystyle \delta(\C_{\lambda})=\Big\lceil \frac{n-1}{m_{\sigma}-n_{\sigma}} \Big\rceil$. It can be shown that at least $\delta(\C_{\lambda})$ many elements of $\C_{\lambda}$ are required to generate a transitive subgroup of $S_n$ and conversely it suffices to take  $\delta(\C_{\lambda})$ many elements of $\C_{\lambda}$ to generate a transitive subgroup of $S_n$ (see \cite{ro2}). This shows that at least $\delta(\C_{\lambda})$ many elements are required to generate $S_n$ or $A_n$. A general result in this direction is due to Hall-Liebeck-Seitz. In \cite[Theorem 3]{hls}, they prove that $\delta(\C_{\lambda})+2$ elements of $\C_{\lambda}$ generate $S_n$ or $A_n$, unless $\C_{\lambda}$ is the conjugacy class of fixed-point-free involutions. A few results are also available for specific conjugacy classes. In \cite{sm}, the authors show that $\delta(\C_l)$ many elements of the conjugacy class $\C_l$ of $l$-cycles are enough to generate $S_n$ or $A_n$ when $2\leq l<n$. They also show that a pair of $n$-cycles generates $S_n$ or $A_n$ (note that $\delta(\C_n)=1$). An asymptotic result in this direction is due to Beasley-Brenner-Erd\H{o}s-Szalay-Williamson. In \cite{bbesw}, they prove that almost all conjugacy classes of the alternating group $A_n$ contain a pair of generators. Rodgers defined the notion of conjugacy-generating-number of a non-trivial conjugacy class $C$ of a finite non-abelian simple group $G$ as the minimum number of elements of $C$ required to generate $G$. Let it be denoted by $\gn(C)$. In \cite{ro1}, the author observed that a simple argument using the $\frac{3}{2}$-generation of finite simple groups yields $\gn(C)\leq \cn(C)+1$, where $\cn(C)$ is the covering number of $C$ in $G$. Moreover, when $G=A_n$ and $n\geq 5$, he showed that $\cn(C)\leq \frac{3}{2}\gn(C)+4$ (see \cite[Theorem 6]{ro1}). In this language, Theorem \ref{generation_by_conjugate_derangements} states that if $C$ is a conjugacy class of derangements in $A_n$, then $\gn(C)=2$, unless $C$ consists of fixed-point-free involutions. Now we proceed towards proving Theorem \ref{generation_by_conjugate_derangements}.

    \medskip

    We recall the well-known notion of a primitive action. Let $G$ be a transitive permutation group on a set $X$. A non-empty subset $B\subseteq X$ is called a block of $G$ if $g.B=B$ or $g.B\cap B =\varnothing$ for all $g\in G$. A block is non-trivial if it is neither a singleton nor equals $X$. It is easy to see that if $B$ is a block then $g.B$ is a block as well. $G$ is called imprimitive if there exists a non-trivial block $B$, in which case $B$ is called a block of imprimitivity. In this case, we get a collection of non-trivial blocks, say $\{B_1,\ldots, B_k\}$ such that $\displaystyle B_i\cap B_j=\varnothing$, $\displaystyle\sqcup_{i}B_i=X$, and each $g\in G$ permutes the $B_i$'s. Then, $\mathcal{B}=\{B_1,B_2,\ldots, B_k\}$ is called a complete block system for $G$. We also conclude that all blocks of $G$ have the same size. $G$ is called primitive if there does not exist a non-trivial block for $G$. An old result of Jordan (see \cite{wi}) states that if $G$ is a primitive subgroup of $S_n$ and $G$ contains a cycle of prime length $p$ such that $2\leq p\leq n-3$, then $G$ contains $A_n$. To establish the 2-generation of conjugacy classes of derangements, we will invoke the following result of Jones, which removes the primality condition from Jordan's theorem. It is worth noting that Jones uses the CFSG to establish the result. 

    \begin{theorem}\cite[Corollary 1.3]{Jo}\label{Jones_Jordan}
        Let $G$ be a primitive subgroup of $S_n$ and $G$ contains a cycle of length $l$ such that $2\leq l  \leq n-3$. Then, $G$ contains $A_n$.
    \end{theorem}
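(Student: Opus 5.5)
The plan is to reduce to multiply transitive groups and then invoke the classification of finite simple groups (CFSG); this is presumably how Jones' corollary arises. Let $c\in G$ be a cycle of length $l$, and set $k=n-l\geq 3$, the number of points fixed by $c$. Write $F$ for the fixed-point set of $c$ and put $H=\langle c\rangle$. Then $H$ fixes $F$ pointwise and acts transitively (indeed regularly) on the complement $X\setminus F$, which has $l\geq 2$ points.

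\textbf{Step 1 (Marggraf's theorem).} I will use the classical theorem of Marggraf (see Wielandt \cite{wi}, Theorem 13.1): if a primitive group $G$ of degree $n$ has a subgroup that fixes $k$ points and is transitive on the remaining $n-k$ points, then $G$ is $(k+1)$-transitive. Applying it to $H$ shows that $G$ is $(k+1)$-transitive, hence at least $4$-transitive since $k\geq 3$. (For $l$ prime this step is exactly Jordan's original argument.) If a more elementary route were wanted, I would instead argue by induction on $k$. Since $G$ is primitive and $H$ acts transitively on $X\setminus F$, the set $X\setminus F$ is a Jordan complement. Jordan sets then propagate transitivity, so the point stabiliser $G_\alpha$, for $\alpha\in F$, is primitive on $X\setminus\{\alpha\}$ and still contains $c$ with $k-1$ fixed points.

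\textbf{Step 2 (CFSG).} By the classification of finite $4$-transitive groups, which relies on CFSG, $G$ is one of $A_n$, $S_n$, $M_{11}$, $M_{12}$, $M_{23}$, $M_{24}$ in their natural actions of degree $11$, $12$, $23$, $24$. In the first two cases we are done.

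\textbf{Step 3 (excluding the Mathieu groups).} It remains to show that no Mathieu group contains a single cycle fixing at least $3$ points. From the known cycle types (ATLAS), the only elements of these groups that are single cycles are the $11$-cycles in $M_{11}$ and $M_{12}$ (fixing $0$ and $1$ points respectively) and the $23$-cycles in $M_{23}$ and $M_{24}$ (fixing $0$ and $1$ points). All other elements have at least two nontrivial cycles, for instance types $1^3 7^3$ and $3\cdot 21$ in $M_{24}$. Alternatively, a cheaper bound does most of the work: by Step 1, a cycle fixing $k$ points forces $(k+1)$-transitivity. Since $M_{24}$ and $M_{12}$ are at most $5$-transitive, this already gives $k\leq 4$, and then only cycle lengths $n-3$ and $n-4$ need to be ruled out from element orders. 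For instance, $M_{24}$ has no elements of order $21$ or $20$ that act as single cycles.

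\textbf{Main obstacle.} The real weight lies in Step 2, which depends on CFSG and which we simply import. Among the steps proved by hand, the delicate one is Step 1. There one must check that Marggraf's theorem applies with $H$ merely transitive, not necessarily primitive, on $X\setminus F$. I would cite Wielandt for this rather than reprove it.
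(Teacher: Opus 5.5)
Your proposal is correct, and it is essentially the argument behind the cited result. The paper gives no proof of its own and simply quotes Jones, whose proof runs the same way: Marggraf's theorem shows that a cycle fixing $k\ge 3$ points makes $G$ $(k+1)$-transitive, the CFSG-based list of $4$-transitive groups then applies, and the Mathieu groups are excluded because their only single-cycle elements are the $11$- and $23$-cycles, which fix at most one point. Your optional ``elementary'' replacement for Step 1 is unnecessary, and as sketched it simply asserts the nontrivial point that $G_\alpha$ is primitive on $X\setminus\{\alpha\}$; note also that $X\setminus F$ is the Jordan set and $F$ its complement. Relying on the citation, as you propose, is the right choice.
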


    \noindent For convenience, let $\mathscr{P}_t(n)$ denote the set of all partitions of $n$ with all parts greater than $t$. Then, the conjugacy classes of derangements in $S_n$ are completely parameterized by partitions $\lambda\in \mathscr{P}_1(n)$. We are now ready to prove Theorem \ref{generation_by_conjugate_derangements}. We first prove the following more general statement.
    \begin{theorem}\label{two_generation_derangements}
        Let $\lambda=(\lambda_1,\ldots,\lambda_l)\in \mathscr{P}_1(n)$ and $\lambda\neq (2^{n/2})$, where $n\geq 5$. Let $\C_{\lambda}$ be the conjugacy class of \(S_n\) indexed by $\lambda$. Let $\sigma=
        \displaystyle \prod_{i=1}^{l}(a_{i,1},a_{i,2},\ldots,a_{i,\lambda_i}) \in \C_{\lambda}$. Let $2\leq r<\lambda_i$
        and define $\tau=(a_{i,j},\sigma(a_{i,j}), \cdots, \sigma^{r-1}(a_{i,j}))$.
        Further, define
        \[
            \rho=(a_{1,1},\ldots,a_{1,\lambda_1},
            a_{2,1},\ldots,a_{2,\lambda_2},
            \ldots, a_{i-1,\lambda_{i-1}}, a_{i,j},\sigma(a_{i,j}), \cdots, \sigma^{\lambda_i-1}(a_{i,j}), a_{i+1,1}, \ldots, a_{l,\lambda_l}).
        \]
        Then, $H:=\langle \tau \sigma \tau^{-1},\rho \sigma \rho^{-1}\rangle$ contains $A_n$.
    \end{theorem}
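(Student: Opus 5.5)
The plan is to find a short cycle inside $H$ and then invoke either Theorem \ref{Jones_Jordan} or the classical fact about $3$-cycles. It is convenient to replace $H$ by a conjugate in which $\sigma$ itself is a generator. Conjugating by $\tau^{-1}$ gives
\[
H^{\tau^{-1}}=\langle \sigma,\ \tau^{-1}\rho\,\sigma\,\rho^{-1}\tau\rangle .
\]
Conjugating by $\rho^{-1}$ instead gives $\langle \rho^{-1}\tau\sigma\tau^{-1}\rho,\ \sigma\rangle$. Since containing $A_n$ is invariant under conjugation, it suffices to prove the claim for either of these groups.

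\textbf{Step 1: a short cycle.} Write $a=a_{i,j}$. The commutator $[\tau,\sigma]=\tau\sigma\tau^{-1}\sigma^{-1}$ equals $\tau\cdot\bigl(\sigma\tau\sigma^{-1}\bigr)^{-1}$. Here $\sigma\tau\sigma^{-1}=(\sigma a,\sigma^2a,\ldots,\sigma^{r}a)$ overlaps $\tau$ in $r-1$ consecutive symbols. A direct check should show that the product is a $3$-cycle on $\{a,\sigma^{r-1}a,\sigma^{r}a\}$; the hypothesis $r<\lambda_i$ guarantees $\sigma^r a\neq a$. Similarly, $\rho\sigma\rho^{-1}\sigma^{-1}=\rho\,(\sigma\rho\sigma^{-1})^{-1}$, where $\sigma\rho\sigma^{-1}$ is $\rho$ with every letter shifted one step along its $\sigma$-cycle. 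Since $\rho$ lists the cycles of $\sigma$ consecutively, this product should move only the letters at the junctions between consecutive cycles, so it is an element of small support. The main task is to combine $x=\tau\sigma\tau^{-1}$ and $y=\rho\sigma\rho^{-1}$, for instance via $x^{-1}y$, $xy^{-1}$ or short words in these, so that the $\sigma$-factors cancel and one obtains either a $3$-cycle or a single cycle of length between $2$ and $n-3$ inside $H$. I expect this bookkeeping to be the main obstacle. In particular, the interaction between where $\tau$ sits inside the $i$-th cycle and the junction points of $\rho$ must be tracked carefully, and small cases ($\lambda_i=3$, or only two cycles) may have to be handled separately.

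\textbf{Step 2: transitivity.} The generator $x$ has the same cycles as $\sigma$ with the $i$-th cycle relabelled on the letters of $\tau$. The generator $y$ has the cycles of $\sigma$ relabelled by the $n$-cycle $\rho$, and these straddle consecutive blocks $\{a_{t,\cdot}\}$ and $\{a_{t+1,\cdot}\}$ whenever the parts are not all equal to the same length. Chaining these overlaps shows that the orbits of $\langle x,y\rangle$ merge into one. If all parts are equal, the hypothesis $\lambda\neq(2^{n/2})$ means the parts are at least $3$, and the $\tau$-twist in $x$ together with the shift in $y$ still links adjacent cycles.

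\textbf{Step 3: primitivity and conclusion.} Let $K\le H$ be the subgroup generated by all $3$-cycles in $H$, which is nontrivial by Step 1. $K$ is normal in the transitive group $H$, so its orbits form a block system. On each orbit, $K$ is generated by $3$-cycles acting transitively, hence acts as the full alternating group of that orbit (a classical fact). Using the explicit $3$-cycle from Step 1 and its conjugates by $x$ and $y$, I would show that these supports overlap so that one $K$-orbit is all of $\{1,\ldots,n\}$, which forces $A_n\le K\le H$. If Step 1 yields a longer cycle rather than a $3$-cycle, I would instead rule out nontrivial blocks directly: a block meeting the support of the short cycle in exactly one point gives a contradiction with its image under $x$ or $y$. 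Primitivity then follows, and Theorem \ref{Jones_Jordan} gives $A_n\le H$.
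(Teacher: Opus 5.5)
\textbf{There is a genuine gap, and it starts in Step 1.} The $3$-cycle you compute, $[\tau,\sigma]=\tau\sigma\tau^{-1}\sigma^{-1}=x\sigma^{-1}$, is not known to lie in $H$. That would require $\sigma\in H$, and none of $\sigma$, $\tau$, $\rho$ is available in $H$.

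Passing to $\tau^{-1}H\tau=\langle\sigma,\tau^{-1}y\tau\rangle$ does not help. There, $[\tau,\sigma]\in\tau^{-1}H\tau$ would mean $\tau^{2}\sigma\tau^{-2}\in H$, and similarly $[\rho,\sigma]\in\rho^{-1}H\rho$ would need $\rho^{2}\sigma\rho^{-2}\in H$.

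The only place where the $\sigma$'s genuinely cancel is
\[
yx^{-1}=[\rho,\sigma]\,[\tau,\sigma]^{-1},
\]
which is conjugate to $g=x^{-1}y$. This is exactly the paper's key element. By Lemma~\ref{lemma_cycle_length} it is a single cycle of length $2l+1$ through $\sigma^{r-1}(a_{i,j})$, the $l$ heads and the $l$ tails, not a $3$-cycle. So the subgroup $K$ generated by the $3$-cycles of $H$ in Step 3 has no reason to be nontrivial, and that branch of your argument collapses. (Incidentally, in Step 2 each cycle $(a_{k,2},\ldots,a_{k,\lambda_k},a_{k+1,1})$ of $y$ straddles two consecutive cycles of $\sigma$ whatever the part sizes, so transitivity is immediate without your case distinction.)

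\textbf{Primitivity is where the work lies, and your fallback sketch does not supply it.} Blocks meeting $\Omega=\mathrm{supp}(g)$ in a single point are trivially excluded. The real cases are:
\begin{itemize}
\item a block containing $\Omega$;
\item a block properly inside $\Omega$, which is a residue class along the cycle $g$ and so contains at least one head and one tail.
\end{itemize}
The paper shows that blocks cannot mix $\Omega$ and $\Omega^c$, and that $\Omega$ itself is not a block. It then chases $x$ on the blocks $B\ni\sigma^{r-1}(a_{i,j})$ and $C\ni h_i$: this forces $x(B)=C$, $x(C)=B$, heads in $B$ and tails in $C$ to lie in transpositions of $x$, and $\lambda_i=r+1$, and finally a contradiction. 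The case $\lambda=(3,2^t)$ needs a separate argument (Lemma~\ref{primitive_2}), because there $g$ is an $n$-cycle and $\Omega^c=\varnothing$.

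\textbf{Even with primitivity, Theorem~\ref{Jones_Jordan} may not apply.} It needs $2l+1\le n-3$, which fails when most parts are $2$ or $3$: for $(3^u,2^t)$ with $u\le 3$, $(4,2^t)$, $(4,3,2^t)$ and $(5,2^t)$. For these you need other short cycles in $H$:
\begin{itemize}
\item $x^2$ for $(3,2^t)$ and $(5,2^t)$;
\item $x^4$ for $(4,3,2^t)$;
\item the $5$-cycle $x^2y^2$ for $(4,2^t)$;
\item $x^3y^3$ for $(3^2,2^t)$ and $(3^3,2^t)$.
\end{itemize}
In addition, $(3,2)$, $(5)$, $(5,2)$, $(4,2)$, $(3,3)$ and $(3,3,3)$ must be checked directly. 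Your plan anticipates neither the primitivity analysis nor these exceptional families.
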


    \noindent With the notations of the Theorem \ref{two_generation_derangements}, 

    \noindent (1) for $1\leq k\leq l$, set
            \[
                    h_k=\begin{cases}
                    a_{k,1} & \text{if}\;\;k\neq i\\
                    a_{k,j} &  \text{if}\;\;k=i
                    \end{cases},\qquad \text{and} \qquad  t_k=\begin{cases}
                    a_{k,\lambda_k} & \text{if}\;\;k\neq i\\
                    \sigma^{\lambda_i-1}(a_{k,j}) & \text{if}\;\;k=i
            \end{cases}.\]
        We call $h_k$ and $t_k$ the head and tail of the $k$-th cycle (of length $\lambda_k$) of $\sigma$, respectively.
        
    \noindent (2) Set $\displaystyle x:=\tau \sigma \tau^{-1}=\prod_{\substack{k=1\\k\neq i}}^{l}(a_{k,1},\ldots,a_{k,\lambda_k})(\sigma(a_{i,j}),\cdots,\sigma^{r-1}(a_{i,j}), a_{i,j}, \sigma^{r}(a_{i,j}), \cdots,\sigma^{\lambda_{i}-1}(a_{i,j}))$.
        \begin{align*} \text{Set } y:&=\rho \sigma \rho^{-1}\\
              &=\prod_{\substack{k=1\\k\neq i,i-1}}^{l}(a_{k,2},\ldots,a_{k,\lambda_k},a_{k+1,1})(a_{i-1,2},\cdots,a_{i-1,\lambda_{i-1}},a_{i,j})(\sigma(a_{i,j}),\cdots,\sigma^{\lambda_{i}-1}(a_{i,j}),a_{i+1,1}),\end{align*} where $a_{l+1,1}=a_{1,1}$.

    \begin{lemma}\label{lemma_cycle_length}
        Let $\sigma, \tau, \rho$ be as in the statement of Theorem \ref{two_generation_derangements}. Let $x=\tau\sigma\tau^{-1}$ and $y=\rho \sigma \rho^{-1}$. Then, 
        \[g:=x^{-1}y=(\sigma^{r-1}(a_{i,j}),h_i,h_{i-1},\ldots,h_1,h_l,h_{l-1},\ldots,h_{i+1},t_i,t_{i+1},\ldots,t_l, t_1,t_2,\ldots,t_{i-1}),\]
        and hence, \(g\) is a cycle of length \(2l+1\).
    \end{lemma}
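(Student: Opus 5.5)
This is a direct computation. I will evaluate $g=x^{-1}y$ letter by letter, using the explicit cycle forms of $x$ and $y$ displayed just before the lemma. I use the convention consistent with the paper's earlier computations (e.g.\ $\sigma=\pi\tau$ in Bertram's construction): products are composed right to left, so $g(a)=x^{-1}(y(a))$. Write $c_s:=\sigma^{s}(a_{i,j})$ for $0\le s\le \lambda_i-1$, so $c_0=h_i$ and $c_{\lambda_i-1}=t_i$. Then $x$ agrees with $\sigma$ on every cycle $k\neq i$, and on the $i$-th cycle it is
\[
(c_1,\dots,c_{r-1},c_0,c_r,\dots,c_{\lambda_i-1}).
\]
The cycles of $y$ are
\[
(a_{k,2},\dots,a_{k,\lambda_k},a_{k+1,1}),
\]
each obtained by dropping the head of a cycle of $\sigma$ and appending the head of the next cycle. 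Index $i$ is special: cycle $i-1$ ends in $h_i=c_0$, and cycle $i$ is $(c_1,\dots,c_{\lambda_i-1},h_{i+1})$.

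The first step is to check that $g$ fixes every non-head, non-tail letter other than $c_{r-1}$. Take an interior letter $a_{k,m}$ of cycle $k\neq i$ with $2\le m\le\lambda_k-1$. Then $y(a_{k,m})=a_{k,m+1}=x(a_{k,m})$, so $g(a_{k,m})=a_{k,m}$. For cycle $i$, take $c_s$ with $1\le s\le \lambda_i-2$. We have $y(c_s)=c_{s+1}$, and $x(c_s)=c_{s+1}$ except when $s=r-1$, because $x(c_{r-1})=c_0$. Hence $g$ fixes $c_s$ for $s\neq r-1$.

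The second step computes $g$ on the $2l+1$ special letters: the heads $h_k$, the tails $t_k$, and $c_{r-1}$.
\begin{itemize}
\item \emph{The letter $c_{r-1}$.} Here $y(c_{r-1})=c_r$ and $x^{-1}(c_r)=c_0=h_i$, so $g(c_{r-1})=h_i$.
\item \emph{Heads.} For a head $h_k$, $y(h_k)$ is the tail of cycle $k-1$ if it lies in cycle $k-1$ (for $k\neq i$), or $y(c_0)=c_1$ for $k=i$. Applying $x^{-1}$ to a tail $t_{k-1}$ gives the preceding letter of that cycle, which is not what we want. I should instead use $y(t_{k-1})=h_k$, i.e.\ track how the heads and tails are actually mapped.
\item \emph{Tails.} Concretely, $y(t_k)=h_{k+1}$, and $x^{-1}(h_{k+1})=t_{k+1}$ for $k+1\neq i$, since $x$ sends a tail to its own head. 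So $g(t_k)=t_{k+1}$, giving the tail run $t_i\to t_{i+1}\to\cdots\to t_l\to t_1\to\cdots\to t_{i-1}$.
\end{itemize}
At the end of the tail run, $y(t_{i-1})=c_0$, and $x^{-1}(c_0)=c_{r-1}$, which closes the cycle.

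For heads with $k\neq i$, $y(a_{k,1})=y(h_k)$: the letter $h_k$ appears in the $y$-cycle of $k-1$ as its last entry, so $y(h_k)=a_{k-1,2}$ (or $c_1$ when $k-1=i$). Then $x^{-1}(a_{k-1,2})=h_{k-1}$, and $x^{-1}(c_1)=c_{\lambda_i-1}$ in the $i$-th $x$-cycle, so I will need to check that index carefully. This gives the descending head run $h_i\to h_{i-1}\to\cdots\to h_1\to h_l\to\cdots\to h_{i+1}$, followed by $h_{i+1}\mapsto t_i$. Note that $y(h_{i+1})=c_1$, and the $i$-th $x$-cycle ends $\dots,c_{\lambda_i-1}$, which wraps around to $c_1$, so $x^{-1}(c_1)=c_{\lambda_i-1}=t_i$. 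The value $g(h_i)$ needs separate handling: $y(h_i)=y(c_0)$ with $c_0$ at the end of the $(i-1)$-st $y$-cycle, so $y(h_i)=a_{i-1,2}$ and $x^{-1}(a_{i-1,2})=h_{i-1}$.

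Assembling these values yields exactly the stated $(2l+1)$-cycle. The main obstacle is bookkeeping at the boundary cases. When $\lambda_k=2$, the head is immediately followed by the tail; when $i=1$ or $i=l$, the indices wrap around; and the case $r=\lambda_i-1$ needs attention. In each of these I will re-verify the individual images. The $2l+1$ letters are distinct because $r\ge2$ makes $c_{r-1}\ne c_0$, and $r<\lambda_i$ makes $c_{r-1}\ne t_i$ (this needs $r-1<\lambda_i-1$). This confirms that $g$ is a single cycle of length $2l+1$.
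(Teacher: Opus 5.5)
Your proposal is correct and is the same direct letter-by-letter computation the paper intends (its proof is just ``A direct computation''). The values you end up with are exactly right: $g(c_{r-1})=h_i$, $g(h_k)=h_{k-1}$ for $k\neq i+1$, $g(h_{i+1})=t_i$, $g(t_k)=t_{k+1}$ for $k\neq i-1$, $g(t_{i-1})=c_{r-1}$, and $g$ fixes every other letter; these assemble into the stated $(2l+1)$-cycle.

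Two points to clean up in a write-up:
\begin{itemize}
\item Delete the first ``Heads'' bullet. Its claims that $y(h_k)$ is a tail and that $y(c_0)=c_1$ are false for $l\ge 2$. You later correctly use $y(h_k)=a_{k-1,2}$, resp.\ $c_1$ when $k-1=i$.
\item Treat $l=1$ (i.e.\ $\lambda=(n)$) separately. There the cycles $i-1$ and $i$ coincide, $\rho=\sigma$ so $y=\sigma$, and $g(h_1)=x^{-1}(c_1)=t_1$. So your rule for $h_{i+1}$ applies rather than the one for $h_i$, and $g$ is the $3$-cycle $(c_{r-1},h_1,t_1)$.
\end{itemize}
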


    \begin{proof}
        A direct computation.
    \end{proof}

    \begin{lemma}\label{primitive_1}
        Let $\lambda, \sigma, \tau, \rho$ be as in Theorem \ref{two_generation_derangements}. Assume further that $\lambda\neq(3,2^t)\vdash n$. Then $H:=\langle \tau \sigma \tau^{-1},\rho \sigma \rho^{-1}\rangle$ is primitive.
    \end{lemma}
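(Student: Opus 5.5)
We must show that $H=\langle x,y\rangle$ is primitive, where $x=\tau\sigma\tau^{-1}$ and $y=\rho\sigma\rho^{-1}$. We first establish transitivity and then rule out nontrivial block systems by exploiting the rigid cycle structures of $x$, $y$ and $g=x^{-1}y$.

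\textbf{Transitivity.} The cycles of $y$ link the last $\lambda_k-1$ symbols of the $k$-th cycle of $\sigma$ (suitably rotated for $k=i$) to the head $h_{k+1}$ of the next cycle. Each cycle of $x$ is a cycle of $\sigma$ (the $i$-th one rearranged), so each $x$-orbit is the support of one $\sigma$-cycle. Since $y$ connects consecutive $\sigma$-cycles cyclically, the orbits of $\langle x,y\rangle$ merge into a single orbit, and $H$ is transitive.

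\textbf{Excluding blocks.} Suppose $\mathcal{B}$ is a nontrivial block system with blocks of size $b$, where $1<b<n$ and $b\mid n$. By Lemma \ref{lemma_cycle_length}, $g=x^{-1}y$ is a single $(2l+1)$-cycle with $n-2l-1$ fixed points. The key is to analyse how a long cycle with many fixed points interacts with a block system.

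We record two standard facts. If an element $z$ fixes a point $p$, then $z$ stabilizes the block $B\ni p$. If $z$ is a cycle $c$ together with fixed points, then each block either lies in the fixed set, or lies inside $\supp(c)$ and is permuted by $c$, or is stabilized by $z$; any block meeting both $\supp(c)$ and the fixed set must contain all of $\supp(c)$.

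\emph{Case $2l+1<n$.} Then $g$ has fixed points. Consider a block $B$ meeting $\supp(g)$. Either $B\supseteq \supp(g)$, or $B\subseteq\supp(g)$ and $g$ permutes such blocks, so $b\mid 2l+1$ and $\supp(g)$ is a union of blocks. In the latter situation we use the explicit order of $g$ (heads followed by tails) together with $x$: since $x$ maps $t_k\mapsto h_k$ for $k\neq i$, the action of $x$ on the supports forces the blocks inside $\supp(g)$ to be compatible with the interleaving pattern of heads and tails. This should produce a block containing both a head and its own tail, and then $x$-invariance spreads that block along the whole $\sigma$-cycle. Iterating with $y$, which connects consecutive $\sigma$-cycles, the block grows to all of $\{1,\dots,n\}$, a contradiction.

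In the former situation, $B$ contains every head and every tail. Applying $x$ and $y$, which send tails to heads and heads onward along the cycles, the images of $B$ overlap $B$, so they equal $B$. Hence $B$ is $H$-invariant, and transitivity gives $B=\{1,\dots,n\}$, again a contradiction.

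\emph{Case $2l+1=n$.} This means $\lambda=(3,2^{l-1})$, or $\lambda=(2^{n/2})$ when $n$ is even. Both are excluded by hypothesis. In the remaining cases all parts satisfy $\lambda_k\ge 2$ and some part is at least $3$ (since $r<\lambda_i$ forces $\lambda_i\ge3$), so $n\ge 2l+1$ with equality only in the excluded shapes. We also expect to need the specific symbol $\sigma^{r-1}(a_{i,j})$, which lies in $\supp(g)$ but is neither a head nor a tail when $\lambda_i\geq 3$. It allows a block to be pinned down using the fixed points of $g$, namely the middle symbols of the cycles.

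\textbf{Main obstacle.} The hardest step is the subcase where the blocks subdivide $\supp(g)$ with $b\mid 2l+1$. There one must track the cyclic order in Lemma \ref{lemma_cycle_length} against the actions of $x$ and $y$ to derive a contradiction. The plan is to compare $g^{m}$, where $m$ is the number of blocks in $\supp(g)$, which fixes every block in $\supp(g)$, with $x$, which moves $t_k\mapsto h_k$. The goal is to force $b=1$ there, or to reach the excluded $(3,2^t)$ pattern, where genuine blocks of size $3$ or $2$ may in fact exist.
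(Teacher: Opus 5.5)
\textbf{There is a genuine gap: the central case is only planned, and the mechanism you sketch for it does not work.}

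Your transitivity argument is fine. So is your treatment of a block $B\supseteq\supp(g)$: $y(t_k)=h_{k+1}$ for every $k$, and $x(t_k)=h_k$ for $k\neq i$, so such a $B$ is $H$-invariant. The content of the lemma lies in the other case, which you leave open. There the blocks lie properly inside $\supp(g)$; they are residue classes, modulo an odd $m$ with $m\mid 2l+1$ and $3\le m\le (2l+1)/3$, of the positions along the cycle in Lemma~\ref{lemma_cycle_length}. For this case you only say that the interleaving ``should produce'' a block containing a head and its own tail. You then propose a continuation that fails:
\begin{itemize}
\item If such a $B$ exists, then $x(B)=B$ gives the whole $x$-cycle through $h_k$. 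This contradicts $B\subseteq\supp(g)$ only when $\lambda_k\ge 3$. If $(h_k,t_k)$ is a transposition of $x$, you gain nothing.
\item $y$ does not make $B$ ``grow''. It carries $B$ to another block $y(B)$, and you never show $y(B)\cap B\neq\varnothing$.
\end{itemize}
Your last paragraph concedes this step is only a plan. Its expectation that genuine blocks may exist for $(3,2^t)$ is also mistaken: blocks inside $\supp(g)$ have odd size, and Lemma~\ref{primitive_2} shows that case is primitive as well.

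\textbf{How the paper closes this case.} It follows the block $B\ni\sigma^{r-1}(a_{i,j})$ and the block $C\ni h_i$.
\begin{itemize}
\item It shows $x(B)=C$, and that heads in $B$ and tails in $C$ belong to transpositions of $x$.
\item It then gets $x(C)=B$, and hence $\lambda_i=r+1$.
\item If $r>2$, then $x(t_i)=\sigma(a_{i,j})$ lies outside $\supp(g)$, a contradiction.
\item If $r=2$, a second cycle of length $>2$ yields a block whose $x$-image meets both $\supp(g)$ and its complement. This is exactly where $\lambda\neq(3,2^t)$ is used.
\end{itemize}

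\textbf{How your own idea can be completed.} The missing ingredient is $t_i$. Number the positions $0,\dots,2l$ along $g$. For $k\neq i$, the positions of $h_k$ and $t_k$ sum to $2l+2\equiv 1$, and $t_i$ sits at position $l+1\equiv (m+1)/2 \pmod m$.
\begin{itemize}
\item Since $m$ is odd, the residue class $D$ of $(m+1)/2$ contains $t_i$. It also contains the pair $h_k,t_k$ for the head at position $(m+1)/2\in[2,l]$, which has $k\neq i$.
\item Then $x(t_k)=h_k$ forces $x(D)=D$.
\item But $x(t_i)=\sigma(a_{i,j})$ escapes $D$. If $r\ge 3$ it lies outside $\supp(g)$. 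If $r=2$ it equals $\sigma^{r-1}(a_{i,j})$, which sits at position $0\not\equiv (m+1)/2$.
\end{itemize}
This is a contradiction in every case, but nothing in your proposal supplies it.
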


    \begin{proof}
        We prove this in several steps. Set $x=\tau \sigma \tau^{-1},\;y=\rho \sigma \rho^{-1},\; g=x^{-1}y$. Assume that $H:=\langle x,y \rangle$ is imprimitive and let \(\mathcal B\) be a non-trivial block system for \(H\). Set $\Omega:=\{\sigma^{r-1}(a_{i,j}), h_1,\ldots, h_{l}, t_1, \ldots, t_{l}\}$. Note that $\Omega$ is the set of points moved by $g$ and $\Omega^c$ is the set of all points that are fixed by $g$. Since $\lambda \neq (3,2^t)$, it follows that $\Omega^c\neq \varnothing$.

        \medskip

        \noindent \textbf{Claim 1}: Every non-trivial block for \(H\) is contained either in \(\Omega\) or in \(\Omega^{c}\).

        \noindent Let \(K\) be a non-trivial block. Suppose that \(K\) contains a point of \(\Omega\) and a point of \(\Omega^{c}\). Since every point of \(\Omega^{c}\) is fixed by \(g\), we have $K\cap g(K)\neq\varnothing$, whence $g(K)=K$. Since \(g\) is transitive on \(\Omega\), it follows that $\Omega\subseteq K$. Since $x(\sigma^{r-1}(a_{i,j}))=a_{i,j}=h_i$, we conclude that $x(K)\cap K\neq\varnothing$, whence \(x(K)=K\). Consequently \(K\) contains every cycle of \(x\), whence $K=\{1,2,\ldots, n\}$,  a contradiction.

        \medskip

        \noindent \textbf{Claim 2}: Let \(B\) be a non-trivial block contained in \(\Omega\). Then $B\neq\Omega$.

        \noindent On the contrary, let \(B=\Omega\). Since $\sigma^{r-1}(a_{i,j}),h_i\in\Omega$ and $x(\sigma^{r-1}(a_{i,j}))=a_{i,j}=h_i$, it follows that $x(\Omega)\cap\Omega\neq\varnothing$. Hence $x(\Omega)=\Omega$. If $r>2$, then $x(t_i)=\sigma(a_{i,j})\in \Omega^c$. If $r=2$, since $\lambda\neq (3,2^t)$, there exists a head $h_k$ such that $x(h_k)\in \Omega^c$. We conclude that $x(\Omega)\cap \Omega^c \neq \varnothing$, a contradiction. 

        \medskip

        \noindent \textbf{Claim 3}: Every non-trivial block contained in \(\Omega\) contains at least one head and at least one tail.

        \noindent Since \(g\) is a cycle on \(\Omega\), the claim follows from the structure of blocks for a cyclic permutation.

        \medskip

        \noindent Let \(B\) denote the block containing $\sigma^{r-1}(a_{i,j})$, and let \(C\) denote the block containing \(h_i\). We note that $B\cap C=\varnothing$. Indeed, if $B=C$, from the structure of $g$, it is clear that $B=\Omega$, contradicting the already established claim 2. Since $x(\sigma^{r-1}(a_{i,j}))=h_i$,  we have $h_i\in x(B)$. Hence $x(B)\cap C\neq\varnothing$, and therefore $x(B)=C$.

        \medskip

        \noindent \textbf{Claim 4}: Every head contained in \(B\) is the head of a transposition in the disjoint cycle decomposition of \(x\).

        \noindent Let \(h_k\in B\) where $k\neq i$. Since \(x(B)=C\), we have $x(h_k)\in C$. If the cycle  of $x$ containing \(h_k\) has length greater than \(2\), then $x(h_k)=a_{k,2}$. This contradicts the fact that \(C\subseteq\Omega\). Hence \(h_k\) is the head of a transposition.

        \medskip

        \noindent \textbf{Claim 5}: We have $x(C)=B$. Moreover, every tail contained in \(C\) is the tail of a transposition.

        \noindent Let \(h_j\in B\). By the previous claim, \(h_j\) is the head of a transposition $(h_j,t_j)$.
        Since \(x(B)=C\), $t_j=x(h_j)\in C$. Applying \(x\) once more, we get $h_j=x(t_j)\in x(C)$. Since \(h_j\in B\), we have $x(C)\cap B\neq\varnothing$. Hence $x(C)=B$, as claimed. Now let \(t_j\in C\). Since \(x(C)=B\),
        $x(t_j)\in B$. If \(t_j\) belonged to a cycle of $x$ of length greater than \(2\), then $x(t_j)=h_j$, whence \(h_j\in B\), contradicting the previous claim.

        \medskip 

        \noindent \textbf{Claim 6}: $\lambda_i=r+1$.

        \noindent Since \(h_i\in C\) and \(x(C)=B\), we have $x(h_i)\in B.$ If \(\lambda_i>r+1\), then $x(h_i)=\sigma^{r}(a_{i,j})\in \Omega^c$. This contradicts \(B\subseteq\Omega\), thereby establishing our claim.

        \medskip

        \noindent Assume $r>2$. Note that $x(h_i)=\sigma^{r}(a_{i,j})=t_i\in B$, since $x(C)=B$. Now $x(t_i)=\sigma(a_{i,j})\in \Omega^c$, whence $C\cap \Omega^c\neq \varnothing$, a contradiction. Now assume that $r=2$, whence $\lambda_i=3$. Since $\lambda \neq (3,2^t)$, there exists another cycle of $x$ with length greater than \(2\). Let \(h_k\) be the head of such a cycle and let \(D\) be the block containing \(h_k\). By claim 3, \(D\) contains a tail \(t_s\). Since \(\lambda_k>2\), $x(h_k)=a_{k,2}\in\Omega^{c}$. On the other hand, $x(t_s)$ is a head and therefore belongs to \(\Omega\). Thus \(x(D)\) contains a point of \(\Omega\) and a point of \(\Omega^{c}\), contradicting the first claim. This provides the final contradiction to our assumption, whence $H$ is primitive.
    \end{proof}

    \begin{lemma}\label{primitive_2}
        Let $\lambda=(3,2^t)\vdash n$, where $n\geq 5$. Let $\sigma, \tau, \rho$ be as in Theorem \ref{two_generation_derangements}. Then $H:=\langle \tau \sigma \tau^{-1},\rho \sigma \rho^{-1}\rangle$ is primitive.
    \end{lemma}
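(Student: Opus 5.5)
In this case the block argument of Lemma \ref{primitive_1} does not apply, since $\Omega^c$ turns out to be empty. I will instead use the fact that the single element $g=x^{-1}y$ is an $n$-cycle, so every block system is very rigid, and then show that $x$ cannot preserve any such system.

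\textbf{Setting up.} Here $n=3+2t$ with $t\geq 1$, so $n$ is odd and $l=t+1$. The hypothesis $2\leq r<\lambda_i$ forces the $i$-th cycle to be the $3$-cycle and $r=2$. By Lemma \ref{lemma_cycle_length}, $g=x^{-1}y$ is a cycle of length $2l+1=n$, so $\Omega$ is the whole set and $g$ is an $n$-cycle. I label each point by its position in $g$, with the positions read modulo $n$:
\begin{itemize}
\item $\sigma(a_{i,j})$ has position $0$;
\item the head $h_{i-s}$ has position $1+s$, for $0\leq s\leq l-1$;
\item the tail $t_{i+s}$ has position $l+1+s$, for $0\leq s\leq l-1$.
\end{itemize}
From the formula for $x$, the $3$-cycle of $x$ is $(\sigma(a_{i,j}),a_{i,j},\sigma^2(a_{i,j}))$. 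In positions this reads $0\mapsto 1\mapsto l+1\mapsto 0$. Every other cycle of $x$ is a transposition $(h_k,t_k)$. Writing $k=i-s$ with $1\leq s\leq l-1$, we have $h_k$ at position $1+s$ and $t_k$ at position $2l+1-s=n-s$. So on positions $p\in\{2,\dots,n-1\}\setminus\{l+1\}$, the element $x$ acts as the reflection $p\mapsto n+1-p$. I will double-check this indexing against the explicit formula for $g$ in Lemma \ref{lemma_cycle_length}; this bookkeeping is the main place an error could enter.

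\textbf{Blocks of an $n$-cycle.} Suppose $H$ is imprimitive. Since $H$ contains the $n$-cycle $g$, any non-trivial block system of $H$ is also one for $\langle g\rangle$. Hence it consists of the residue classes of positions modulo $d$, for some divisor $d$ of $n$ with $1<d<n$. As $n$ is odd, $d\geq 3$, and also $d\leq n/3$.

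\textbf{Contradiction.} Let $B_c$ denote the class of positions congruent to $c$ modulo $d$.
\begin{itemize}
\item Since $x(0)=1$, the element $x$ maps $B_0$ onto $B_1$.
\item Since $x(1)=l+1$, the element $x$ maps $B_1$ onto the class of $l+1$.
\item Now take the position $p=1+d\in B_1$. Because $3\leq d\leq n/3$, we have $2\leq p\leq n-1$. Moreover $p\neq l+1$: otherwise $d=(n-1)/2$, which is coprime to $n$ and so cannot be a non-trivial divisor of $n$.
\item Therefore $x(p)=n+1-p\equiv 1-1=0 \pmod d$. This gives $l+1\equiv 0\pmod d$.
\item But $2(l+1)=n+1\equiv 1\pmod d$, whereas $2(l+1)\equiv 0\pmod d$. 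This is a contradiction.
\end{itemize}
Hence $H$ is primitive.

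The only delicate point is the position computation for $x$ relative to $g$. Once that is confirmed, the rest is a short congruence check.
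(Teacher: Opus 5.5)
Your proof is correct and is essentially the paper's argument: the paper relabels so that $g=x^{-1}y=(1,2,\ldots,n)$, which is exactly your position labelling with position $0$ being the point $n$. It then uses that $x$ acts as $p\mapsto n+1-p$ off the $3$-cycle, and evaluates $x$ at $1+d$ to force $d\mid t+2=l+1$, which is incompatible with $d\mid n=2(l+1)-1$. The only difference is that you read the positions off Lemma~\ref{lemma_cycle_length} rather than fixing an explicit relabelling; your bookkeeping checks out (the $3$-cycle $0\mapsto 1\mapsto l+1\mapsto 0$ and the transpositions pairing $1+s$ with $n-s$), and you also justify $1+d\notin\{0,\,l+1\}$, which the paper leaves as ``clearly''.
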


    \begin{proof}
        For convenience of the proof, by relabeling if necessary, we let  $\displaystyle \sigma=(1,n,t+2)\prod_{j=1}^{t}(t-j+2,t+j+2)$, $\tau=(1,n)$, $\rho=(1,n,t+2,t+1,t+3,\cdots,2,n-1)$. Set $\displaystyle x:=\tau \sigma\tau^{-1}=(n,1,t+2)\prod_{j=1}^{t}(t-j+2,t+j+2)$ and $\displaystyle y:=\rho \sigma \rho^{-1}=(n,t+2, t+1)\prod_{j=1}^{t}(t-j+1,t+j+2)$. These choices yield $g:=x^{-1}y=(1,2,\ldots, n)$. Since $g$ is an $n$-cycle, a non-trivial block system for $H$ is of the form $\mathcal{B}_d=\{B_1,B_2,\cdots, B_{d}\}$ for some proper divisor $d$ of $n$, where $B_i=\{i+kd \mid 0\leq k\leq n/d -1 \}$. Note that $x(B_1)=B_k$, where $k\equiv  t+2\;(\Mod\;d)$, since $x(1)=t+2$. Clearly $1+d\neq n, t+2$, whence $1+d$ is either $t+j+2$ or $t-j+2$ for some $1\leq j \leq t$. Suppose $1+d=t-j+2$, then $x(1+d)=t+j+2=n-d$. Thus, $n-d\in B_k$, whence $t+2$ is divisible by $d$, a contradiction. Similarly, if $1+d=t+j+2$, then $x(1+d)=t-j+2=n-d$, whence we once again get the same contradiction. This proves the lemma.
    \end{proof}

    \begin{proof}[\textbf{Proof of Theorem \ref{two_generation_derangements}}] 
        Let $x:=\tau \sigma \tau^{-1}$, $y=\rho \sigma \rho^{-1}$. By Lemma \ref{lemma_cycle_length}, Lemma \ref{primitive_1}, Lemma \ref{primitive_2}, $H=\langle x, y \rangle$ is a primitive subgroup of $S_n$ and contains a cycle of length $2l+1$. So if $2\leq 2l+1\leq n-3$, we conclude that $H$ contains $A_n$ by Theorem \ref{Jones_Jordan}. Suppose now that $2l+1\geq n-2$, which implies $l\geq \frac{n-3}{2}$. It is easy to see that we are left with the conjugacy classes indexed by the partitions $(3^u,2^t)$ where $1\leq u\leq 3$, $(4, 2^t)$, $(4,3,2^t)$, and $(5,2^t)$. In all these cases, we exhibit a cycle of desired length in $H$ so that Theorem \ref{Jones_Jordan} can be applied. If $\lambda=(3,2^t)$ or $\lambda=(5,2^t)$, then $x^2\in H$ is a cycle of length 3 or 5, respectively, whence our theorem follows unless $\lambda=(3,2)$ or $\lambda=(5)$ or $\lambda=(5,2)$. In these three remaining cases, \textsf{GAP} (\cite{GAP4}) verifies that $H$ indeed contains $A_5$ and $A_7$, respectively. Now, assume that $\lambda=(4,2^t)$. In this case, one can easily check that $x^2y^2$ is a $5$-cycle. Thus, once again our theorem follows unless $\lambda=(4,2)$, where \textsf{GAP} verifies the theorem. Let $\lambda=(4,3,2^t)$. In this case, $x^4$ is a 3-cycle. Hence the theorem follows. Consider $\lambda=(3,3,2^t)$. In this case, $x^3y^3$ is an $(n-5)$-cycle, whence our result follows unless $\lambda=(3,3)$, which \textsf{GAP} once again verifies in the affirmative. Finally, consider $\lambda=(3,3,3,2^t)$. In this case, $x^3y^3$ is an $(n-8)$-cycle, whence our theorem follows unless $\lambda=(3,3,3)$, which is once again verified by \textsf{GAP} in the affirmative.
    \end{proof}

    \medskip

    \noindent The following corollary is obvious from Theorem \ref{two_generation_derangements}.
    \begin{corollary}\label{several_candidates}
         Let $\lambda=(\lambda_1,\ldots,\lambda_l)\in \mathscr{P}_1(n)$ and $\lambda\neq (2^{n/2})$, where $n\geq 5$. Let $\C_{\lambda}$ be the conjugacy class of \(S_n\) indexed by $\lambda$. Then, for any $x\in \C_{\lambda}$, there exist at least $\displaystyle \sum_{i=1}^{l}\lambda_i(\lambda_i-2)$ many elements $y$ such that $H:=\langle x, y \rangle$ contains $A_n$ or $S_n$.
    \end{corollary}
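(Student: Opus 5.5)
The corollary follows from Theorem \ref{two_generation_derangements} by counting the distinct conjugates it produces. Fix $x\in\C_{\lambda}$ and write it in disjoint cycle notation as $\sigma=\prod_{i=1}^{l}(a_{i,1},\ldots,a_{i,\lambda_i})$, so that $x=\sigma$. Since the ordering of the cycles and the choice of starting point within each cycle were arbitrary in Theorem \ref{two_generation_derangements}, the theorem gives a family of pairs $(\tau\sigma\tau^{-1},\rho\sigma\rho^{-1})$. These are parametrised by a cycle index $i$, a starting point $a_{i,j}$ with $\lambda_i$ choices, and a length $r$ with $2\le r<\lambda_i$, which gives $\lambda_i-2$ choices. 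Each such pair generates a group containing $A_n$.

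The pair in the theorem is not of the form $\langle x,y\rangle$ with $x$ fixed. To fix this, conjugate by $\tau^{-1}$: since $\langle \tau\sigma\tau^{-1},\rho\sigma\rho^{-1}\rangle$ contains $A_n$, so does its conjugate $\langle \sigma, (\tau^{-1}\rho)\sigma(\tau^{-1}\rho)^{-1}\rangle$, because $A_n$ is normal in $S_n$. So for each admissible triple $(i,j,r)$ we get the element $y_{i,j,r}:=(\tau^{-1}\rho)\,\sigma\,(\tau^{-1}\rho)^{-1}\in\C_\lambda$, and $\langle x,y_{i,j,r}\rangle$ contains $A_n$, hence equals $A_n$ or $S_n$. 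The number of triples is $\sum_i\lambda_i(\lambda_i-2)$; this sum is positive because $\lambda\neq(2^{n/2})$, which forces some part $\lambda_i\ge 3$.

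The main step is showing that the elements $y_{i,j,r}$ are pairwise distinct. I would use the invariant from Lemma \ref{lemma_cycle_length}. The element $x^{-1}y$ there is computed for the conjugated pair, so $\tau^{-1}(x^{-1}y)\tau=\sigma^{-1}y_{i,j,r}$ is a $(2l+1)$-cycle whose support is the image under $\tau^{-1}$ of $\{\sigma^{r-1}(a_{i,j}),h_1,\ldots,h_l,t_1,\ldots,t_l\}$. The point $\tau^{-1}(\sigma^{r-1}(a_{i,j}))$ and the placement of $h_i$ and $t_i$ along this cycle depend on the choice of $j$ and $r$. So I would recover $(i,j,r)$ from the cycle $\sigma^{-1}y_{i,j,r}$: the distinguished point adjacent to $h_i$ in the cycle order identifies $i$, $j$ and $r-1$.

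I expect this injectivity bookkeeping to be the delicate part. In particular, the heads $h_k$ for $k\ne i$ are fixed as $a_{k,1}$, and one must check that no two different $(j,r)$ yield the same cycle. If distinctness fails in some degenerate case, I would instead count only the pairs that are distinguished by the support of $\sigma^{-1}y$. The corollary claims "at least" this many elements, and any overcount would have to be handled by refining the labels in this way.
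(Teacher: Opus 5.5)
Your reduction is the natural reading of the paper's one-line ``obvious'': conjugating by $\tau^{-1}$ turns each pair from Theorem~\ref{two_generation_derangements} into $\langle \sigma, y_{i,j,r}\rangle$ with $y_{i,j,r}\in\C_\lambda$. After that, everything rests on the $y_{i,j,r}$ being pairwise distinct. You only announce that step, and it is false in one case.

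Your proposed marker also does not quite work. After conjugation by $\tau^{-1}$, the head $h_i=a_{i,j}$ is replaced by $\sigma^{r-1}(a_{i,j})$, so it no longer appears in the cycle. Write $b=a_{i,j}$. The cycle of Lemma~\ref{lemma_cycle_length} conjugates to
\[
\sigma^{-1}y_{i,j,r}=\tau^{-1}g\tau=(\sigma^{r-2}(b),\sigma^{r-1}(b),h_{i-1},\ldots,h_1,h_l,\ldots,h_{i+1},\sigma^{-1}(b),t_{i+1},\ldots,t_l,t_1,\ldots,t_{i-1}).
\]
For $l\ge 2$ this cycle does determine $(i,j,r)$, by the following reading:
\begin{itemize}
\item cycle $i$ is the only cycle of $\sigma$ meeting the support in three points;
\item among those three points, $\sigma^{r-2}(b)$ is the only one sent back into cycle $i$, namely to $\sigma^{r-1}(b)$;
\item the remaining point is $\sigma^{-1}(b)$, which gives $b$, and then $\sigma^{r-1}(b)$ gives $r$.
\end{itemize}
So for $l\ge 2$ your plan is completed by this observation.

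For $l=1$, i.e.\ $\lambda=(n)$, the construction degenerates. Here $\rho=\sigma$, so $y_{j,r}=\tau^{-1}\sigma\tau$, and $\sigma^{-1}y_{j,r}$ is the $3$-cycle $(\sigma^{r-2}(b),\sigma^{r-1}(b),\sigma^{-1}(b))$. The choices $(b,2)$ and $(\sigma^{2}(b),n-1)$ both give the $3$-cycle $(\sigma^{-1}(b),b,\sigma(b))$, hence the same $y$. Concretely, for $\sigma=(1,2,3,4,5)$, both $\tau=(2,3)$ and $\tau=(4,5,1,2)$ give $y=(1,3,2,4,5)$.

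In fact this family has exactly $n(n-3)$ distinct members, which falls short of $n(n-2)$. Your fallback of refining the labels cannot recover the missing elements, because for $l=1$ there is no labelling freedom left. So the case $\lambda=(n)$ needs generating partners from outside the family of Theorem~\ref{two_generation_derangements}. One option for $n\ge 7$: take the $n$-cycles $y$ such that $x^{-1}y$ is a $3$-cycle on three points permuted in the cyclic order of $x$, with the three points not lying in a common block of $\langle x\rangle$. Jordan's theorem applies to these, and there are at least $n(n-2)$ of them. The cases $n=5,6$ can be settled by a direct count.

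The paper's one-line justification glosses over this same point.
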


    \noindent The following corollary establishes $\frac{3}{2}$-conjugate-generation for the conjugacy classes of derangements in $A_n$.

    \begin{corollary}\label{two_generation_split_class}
        Let $C$  be a conjugacy class of derangements in $A_n$, where $n\geq 5$. Then, for any $x\in C$, there exists $y\in C$ such that they together generate $A_n$.
    \end{corollary}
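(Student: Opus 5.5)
Let $C$ be a conjugacy class of derangements in $A_n$ and fix $x\in C$. The class $C$ is contained in an $S_n$-class $\C_\lambda$ with $\lambda\in\mathscr{P}_1(n)$. Either $C=\C_\lambda$, or $\lambda$ has distinct odd parts and $C$ is one of $\C_\lambda^{\pm}$. The plan is to use Theorem \ref{two_generation_derangements} directly, choosing the conjugating elements inside $A_n$ so that the second generator stays in $C$. Two preliminary points are needed. First, $\lambda\neq(2^{n/2})$: fixed-point-free involutions lie in $A_n$ only when $n\equiv 0\pmod 4$, and that case must be excluded exactly as in Theorem \ref{generation_by_conjugate_derangements}, since a pair of such involutions generates a dihedral group. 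So I will read the corollary with that exclusion. Second, $\langle x,y\rangle\le A_n$ and contains $A_n$, so it equals $A_n$. Everything therefore reduces to finding $y\in C$ with $\langle x,y\rangle\supseteq A_n$.

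Apply Theorem \ref{two_generation_derangements} with $\sigma:=x$. It gives two elements $x':=\tau\sigma\tau^{-1}$ and $y':=\rho\sigma\rho^{-1}$ of $\C_\lambda$ generating a group containing $A_n$. Conjugating by $\tau^{-1}$, the pair $x=\sigma$ and $y:=\tau^{-1}\rho\,\sigma\,(\tau^{-1}\rho)^{-1}$ also generates a group containing $A_n$. If $C=\C_\lambda$ we are done, since $y\in\C_\lambda=C$.

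In the split case, $y\in C$ holds exactly when $\tau^{-1}\rho$ is even, or more generally when $\tau^{-1}\rho\in A_n\cdot C_{S_n}(\sigma)$. Here $C_{S_n}(\sigma)\le A_n$ because the parts are distinct and odd. The parity of $\tau^{-1}\rho$ is $(r-1)+(n-1)$, where $\tau$ is an $r$-cycle and $\rho$ an $n$-cycle. Since $\lambda$ has distinct odd parts and $\lambda\neq(3)$ (as $n\ge5$), some part satisfies $\lambda_i\ge5$, so $r$ may be chosen in $\{2,3\}$ with either parity. I choose $r$ so that $r+n$ is even; then $\tau^{-1}\rho\in A_n$ and $y\in C$.

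The main obstacle is to confirm that the exceptional small cases in the proof of Theorem \ref{two_generation_derangements} survive this choice of $r$. For split classes with $n\ge5$ the partitions involved are $(5)$ and similar, which the GAP verification and the $x^2$-type arguments there cover for every $r$. I would also check that Lemmas \ref{primitive_1} and \ref{primitive_2} do not depend on the particular value of $r$; they don't, since they allow any $2\le r<\lambda_i$. If a single parity of $r$ ever failed, the fallback is to replace $\rho$ by $\rho\sigma^{m}$, which is harmless, or to conjugate the whole pair by an odd element of $C_{S_n}(\sigma)$. The latter option only exists in the non-split case, where it isn't needed.
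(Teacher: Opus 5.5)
Your proof is correct and is essentially the paper's argument. In the split case you choose the length $r$ of $\tau$ with $r\equiv n\pmod 2$, so that $\tau$ and the $n$-cycle $\rho$ have the same parity and the resulting generating pair lies in a single $A_n$-class. Two of your additions make explicit what the paper leaves implicit: conjugating by $\tau^{-1}$ so that the given $x$ is itself one of the generators, and excluding $\lambda=(2^{n/2})$, which the corollary's statement omits but the paper's proof needs because it invokes Theorem~\ref{two_generation_derangements}.
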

 
    \begin{proof}
         Recall that a $S_n$-conjugacy class $\C_{\lambda}$ indexed by a partition $\lambda$ contained in $A_n$ splits into two $A_n$-classes if and only $\lambda$ has distinct and odd parts. Otherwise, $\C_{\lambda}$ remains a conjugacy class of $A_n$. For the latter one, the corollary follows directly from Theorem \ref{two_generation_derangements}. Suppose now that $\lambda\in \mathscr{P}_1(n)$ has distinct and odd parts. Since $n\geq 5$, clearly, the largest part of $\lambda$ is at least 5.  Let $\C_{\lambda}^{+}$ and $\C_{\lambda}^{-}$ be the split $A_n$-conjugacy. Let $\sigma \in \C_{\lambda}^{\pm}$ be an arbitrary element as in Theorem \ref{two_generation_derangements}. If $n$ is even, $\rho \in S_n\setminus A_n$ and hence $y:=\rho \sigma\rho^{-1}\in \C_{\lambda}^{\mp}$. In this case, we can choose $\tau$ to be a cycle of even length. Thus, $\tau \in S_n\setminus A_n$, whence $x\in \C_{\lambda}^{\mp}$. Thus, we get a pair of elements in $\C_{\lambda}^{\pm}$ that generate $A_n$. If $n$ is odd, $\rho \in A_n$ and hence $y\in \C_{\lambda}^{\pm}$. In this case, we can choose a cycle of odd length, whence $\tau \in A_n$. It follows that $x\in \C_{\lambda}^{\pm}$. Once again, we get a pair of elements from $\C_{\lambda}^{\pm}$ that generate $A_n$. 
    \end{proof}

    \begin{proof}[\textbf{Proof of Theorem \ref{generation_by_conjugate_derangements}}] The proof follows from Theorem \ref{two_generation_derangements} and Corollary \ref{two_generation_split_class}.
    \end{proof}

\subsection*{Acknowledgment} We thank Dr. Sumit Chandra Mishra for several discussions on this topic. The first-named author acknowledges the support of the ANRF-NPDF grant PDF/2025/002961 for the fellowship. The second-named author gratefully acknowledges the financial support received from the New Faculty Seed Grant (NFSG), BITS Pilani, under Grant No. N7/26/805.

\bibliographystyle{abbrv}
\bibliography{References}
\end{document}